\numberwithin{equation}{chapter}
\numberwithin{figure}{chapter}
\numberwithin{table}{chapter}
\newtheorem{definition}{Definition}[chapter]
\newtheorem{proposition}[definition]{Proposition}
\newtheorem{theorem}[definition]{Theorem}
\newtheorem{example}[definition]{Example}
\newtheorem{remark}[definition]{Remark}
\DeclareMathOperator{\sgn}{sgn}
\DeclareMathOperator{\blur}{bl}
\DeclareMathOperator{\nan}{NaN}
\DeclareMathOperator{\rd}{rd}
\DeclareMathOperator{\rde}{rd_{\mathcal{E}}}
\DeclareMathOperator{\asc}{asc}
\DeclareMathOperator{\LN}{LN}
\newcommand{\Z}{\mathbb{Z}}
\newcommand{\R}{\mathbb{R}}
\newcommand{\M}{\mathbb{M}}
\newcommand{\N}{\mathbb{N}}
\newcommand{\I}{\mathbb{I}}
\newcommand{\F}{\mathbb{F}}
\newcommand{\U}{\mathbb{U}}
\newcommand{\power}{\mathcal{P}}
\newcommand{\iffy}{\breve{\infty}}
\newcommand{\ul}[1]{\underline{#1}}
\newcommand{\ol}[1]{\overline{#1}}
\newcommand{\dotminus}{\mathbin{\text{\@dotminus}}}
\newcommand{\@dotminus}{%
  \ooalign{\hidewidth\raise1ex\hbox{.}\hidewidth\cr$\m@th-$\cr}%
}
\newcommand{\noncopynumber}[1]{
    \BeginAccSupp{method=escape,ActualText={}}
    #1
    \EndAccSupp{}
}
\begin{document}
\selectlanguage{ngerman}
\frontmatter
\begin{titlepage}
	\raggedright
	\null
	\includegraphics[width=0.25\textwidth]{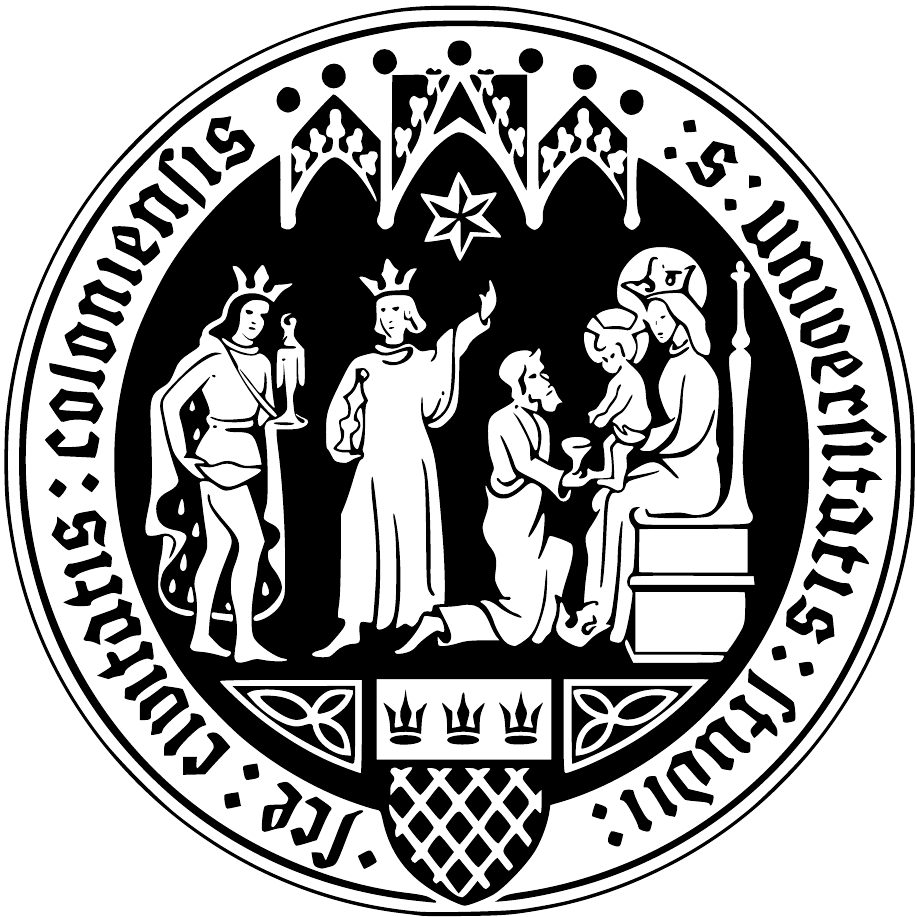}\par\vspace{1cm}
	{\scshape\LARGE Universität zu Köln\par}\vspace{0.3cm}
	{\Large Mathematisches Institut\\}\par\vspace{1.5cm}
	{\scshape\Large Bachelorarbeit\par}
	\vspace{1.5cm}
	{\huge\bfseries The Unum Number Format: Mathematical Foundations,
	Implementation and Comparison to IEEE~754 Floating-Point Numbers\par}
	\vspace{1.5cm}
	{\Large\itshape Laslo Hunhold\par}
	\vfill
	{\em Erstgutachterin:}\par
	Prof.~Dr.~Angela \textsc{Kunoth}\\[3mm]
	{\em Zweitgutachter:}\par
	Samuel \textsc{Leweke}
	\vfill
	{\large\printdate{2016-11-08}}
\end{titlepage}
\selectlanguage{UKenglish}
\selectlanguage{UKenglish}
\tableofcontents
\mainmatter
\chapter{Introduction}
This thesis examines a modern concept for machine numbers based on interval
arithmetic called \enquote{Unums} and compares it to IEEE~754
floating-point arithmetic, evaluating possible uses of this format where
floating-point numbers are inadequate.
In the course of this examination, this thesis builds theoretical
foundations for IEEE~754 floating-point numbers, interval arithmetic
based on the projectively extended real numbers and Unums.
\paragraph{Machine Number Concepts}
Following the invention of machine floating-point numbers by
Leonardo \textsc{Torres y Quevedo} in 1913 (see \cite[Section~3]{ra82})
the format has evolved to be the standard used in numerical computations
today. Over time though, different concepts and new approaches for
representing numbers in a machine have emerged. One of these new concepts
is the \emph{infinity computer} developed by
Yaroslav D. \textsc{Sergeyev}, introducing \emph{grossone} arithmetic for
superfinite calculations. See \cite{se15} for further reading.
\par
Another concept are the \emph{universal numbers} (\enquote{Unums}) proposed by
John L. \textsc{Gustafson}. They were first introduced as a
variable-length floating-point format with an uncertainty-bit as a
superset of IEEE~754 floating-point numbers called \enquote{Unums 1.0}
(see \cite{gu15}). Reasoning about the complexity of machine
implementations for and decimal calculation trade-offs with IEEE~754
floating-point numbers (see \cite[Section~2]{gu16-b}), \textsc{Gustafson}
presented a new format aiming to be easy to implement in the machine and
provide a simple way to do decimal calculations with guaranteed error
bounds (see \cite{gu16-a} and \cite{gu16-b}). He called the new format
\enquote{Unums 2.0}. In the course of this thesis, we are referring to
\enquote{Unums 2.0} when talking about Unums.
\paragraph{Projectively Extended Real Numbers}
Besides the well-known and established concept of extending the real
numbers with signed infinities $+\infty$ and $-\infty$, called the
\emph{affinely extended real numbers}, a different
approach is to only use one unsigned symbol for infinity, denoted as
$\iffy$ in this thesis. This extension is called the
\emph{projectively extended real numbers} and
we will prove that it is well-defined in terms of finite and infinite
limits. It is in our interest to examine how much we lose and what we
gain with this reduction, especially in regard to interval arithmetic.
\paragraph{Interval Arithmetic}
The concept behind interval arithmetic is to model quantities bounded by
two values, thus in general being subsets rather than elements of the real
numbers. Despite the fact that interval arithmetic in the machine can give
definite bounds for a result, it is easy to find examples where it gives
overly pessimistic results, for instance the dependency problem.
\par
This thesis will present a theory of interval arithmetic based on the
projectively extended real numbers, picking up the idea of modelling
degenerate intervals across the infinity point as well, allowing division
by zero and showing many other useful properties.
\paragraph{Goal of this Thesis}
The goal of this thesis is to evaluate the Unum number format in a
theoretical and practical context, make out advantages and see how
reasonable it is to use Unums rather than the ubiquitous IEEE~754
floating-point format for certain tasks.
\par
At the time of writing, all available implementations of the Unum
arithmetic are using floating-point arithmetic at runtime instead of
solely relying on lookup tables as \textsc{Gustafson} proposes. The
provided toolbox developed in the course of this thesis limits
the use of floating-point arithmetic at runtime to the initialisation of
input data. Thus it is a special point of interest to evaluate the format
the way it was proposed and not in an artificial floating-point
environment created by the currently available implementations.
\paragraph{Structure of this Thesis}
Following Chapter~\ref{ch:ieee}, which provides a formalisation of IEEE
754 floating-point numbers from the ground up solely based on the standard,
deriving characteristics of the set of floating-point numbers and using
numerical examples that show weaknesses of the format, Section~\ref{sec:pern}
introduces the projectively extended real numbers and proves
well-definedness of this extension after introducing a new concept of
finite and infinite limits on it.
Based on this foundation, Sections \ref{sec:oi} and \ref{sec:flakes}
construct a theory of interval arithmetic on top of the projectively
extended real numbers, formalizing intuitive concepts of interval
arithmetic.
\par
Using the results obtained in Chapter~\ref{ch:ia},
Chapter~\ref{ch:ua} embeds the Unums 2.0 number format proposed by
John L. \textsc{Gustafson} (see \cite{gu16-a} and \cite{gu16-b}) within
this interval arithmetic, evaluating it both from a theoretical and
practical perspective, providing a Unum 2.0 toolbox that was developed in
the course of this thesis and giving numerical examples implemented in
this toolbox.
\chapter{IEEE 754 Floating-Point Arithmetic}\label{ch:ieee}
Floating-point numbers have gone a long way since Konrad \textsc{Zuse}'s
Z1 and Z3, which were among the first machines to implement floating-point
numbers, back then obviously using a non-standardised format
(see \cite[pp.~31, 40--48]{ro98}). With more and more computers seeing the
light of day in the decades following the pioneering days, the demand for
a binary floating-point standard rose in the face of many different
proprietary floating-point formats.
\par
The Institute of Electrical and Electronics
Engineers (IEEE) took on the task and formulated the \enquote{ANSI\slash
IEEE~754-1985, Standard for Binary Floating-Point Arithmetic}
(see \cite{ieee85}), published and adopted internationally in 1985 and
revised in 2008 (see \cite{ieee08}) with a few extensions, including
decimal floating-point numbers (see \cite[Section~3.5]{ieee08}), which
are not going to be presented here. This standardisation effort led to a
homogenisation of floating-point formats across computer manufacturers,
and this chapter will only deal with this standardised format and follow
the concepts presented in the IEEE 754-2008 standard. All results in
this chapter are solely derived from this standard.
\section{Number Model}
The idea behind floating-point numbers rests on the observation that given
a base $b\in\N$ with $b\ge 2$ any $x\in\R$ can be represented by
\begin{equation*}
	\exists
	(s,e,d) \in \{ 0,1 \} \times \Z \times
	{\{ 0,\ldots, b-1\}}^{\N_0}: x = {(-1)}^s \cdot b^e \cdot
	\sum_{i=0}^{\infty} d_i \cdot b^{-i}.
\end{equation*}
There exist multiple parametres $(s,e,d)$ for a single
$x$. For instance, $x=6$ in the base $b=10$ yields $(0,0,\{6,0,\ldots\})$
and $(0,1,\{0,6,0,\ldots\})$ as two of many possible parametrisations.
\par
Given the finite nature of the computer, the number of possible exponents
$e$ and digits $d_i$ is limited. Within these bounds we can model a
machine number $\tilde{x}$ with exponent bounds $\ul{e},\ol{e} \in \Z$,
$\ul{e} \le e \le \ol{e}$ and a fixed number of digits $n_m \in \N$ and
base $b=2$ as
\begin{equation*}
	\tilde{x} =
	{(-1)}^s \cdot 2^e \cdot \sum_{i=0}^{n_m} d_i \cdot 2^{-i}.
\end{equation*}
Given binary is the native base the computer works with, we will
assume $b=2$ in this chapter. Despite being able to model finite
floating-point numbers in the machine now, we still have problems with
the lack of uniqueness. The IEEE~754 standard solves this by reminding
that the only difference between those multiple parametrisations for a
given machine number $\tilde{x}\neq 0$ is that
\begin{equation*}
	\min\left\{ i \in \{ 0,\ldots,n_m \} \, | \,
	d_i = 0 \right\}
\end{equation*}
is variable (see \cite[Section~3.4]{ieee08}). This means that we have a
varying amount of $0$'s in the sequence $\{d_i\}_{i\in\{0,\ldots,n_m\}}$
until we reach the first $1$.
One way to work around this redundancy is to use \emph{normal} floating
point numbers, which force $d_0=1$ (see \cite[Section~3.4]{ieee08}). The
$d_0$ is not stored as it has always the same value. This results
in the
\begin{definition}[set of normal floating-point numbers]\label{def:sonfpn}
	Let $n_m \in \N$ and $\ul{e},\ol{e} \in \Z$. The set of
	normal floating-point numbers is defined as
	\begin{equation*}
		\M_1(n_m,\ul{e},\ol{e}) := \left\{
		{(-1)}^s \cdot 2^e \cdot \left( 1 + \sum_{i=1}^{n_m} d_i
		\cdot 2^{-i} \right) \, \Bigg| \, s\in\{0,1\} \wedge
		\ul{e} \le e \le \ol{e} \wedge
		d \in {\{ 0,1 \}}^{n_m} \right\}.
	\end{equation*}
\end{definition}
In addition to normal floating-point numbers, we can also define
\emph{subnormal} floating-point numbers, also known as \emph{denormal}
floating-point numbers, which force $d_0=0$ and $e=\ul{e}$
and are smaller in magnitude than the smallest (positive) normal
floating-point number (see \cite[Section~3.4d]{ieee08}).
\begin{definition}[set of subnormal floating-point numbers]\label{def:sosfpn}
	Let $n_m \in \N$ and $\ul{e}\in \Z$. The set of subnormal
	floating-point numbers is defined as
	\begin{equation*}
		\M_0(n_m,\ul{e}) := \left\{
		{(-1)}^s \cdot 2^{\ul{e}} \cdot \left( 0 +
		\sum_{i=1}^{n_m} d_i \cdot 2^{-i}
		\right) \, \Bigg| \, s\in\{0,1\} \wedge
		d \in {\{ 0,1 \}}^{n_m} \right\}.
	\end{equation*}
\end{definition}
The subnormal floating-point numbers allow us to express $0$ with
$d=0$ and fill the so called \enquote{underflow gap}
between the smallest normal floating-point number and $0$.
With $d$ and $s$ variable, we use boundary values of the
exponent to fit subnormal, normal and exception cases under one roof
(see \cite[Section~3.4a-e]{ieee08}).
\begin{definition}[set of floating-point numbers]\label{def:sofpn}
	Let $n_m \in \N$, $\ul{e},\ol{e} \in \Z$ and
	$d \in {\{ 0,1 \}}^{n_m}$. The set of floating
	point numbers is defined as
	\begin{equation*}
		\M(n_m,\ul{e}-1,\ol{e}+1) \ni: \tilde{x}(s,e,d)
		\begin{cases}
			\in \M_0(n_m,\ul{e}) & e = \ul{e}-1 \\
			\in \M_1(n_m,\ul{e},\ol{e}) & \ul{e} \le e \le \ol{e} \\
			= (-1)^s \cdot \infty & e = \ol{e}+1 \wedge
				d = 0 \\
			= \nan & e = \ol{e} + 1 \wedge
				d \neq 0.
		\end{cases}
	\end{equation*}
\end{definition}
In the interest of comparing different parametrisations for
$\M$, we want to find expressions for the smallest positive non-zero
subnormal, smallest positive normal and largest normal floating-point
numbers.
\begin{proposition}[smallest positive non-zero subnormal 
	floating-point number]
	Let $n_m \in \N$ and $\ul{e}\in \Z$. The smallest positive non-zero
	floating-point number is
	\begin{equation*}
		\min\left(\M_0(n_m,\ul{e})\cap \R^+_{\neq 0}\right) =
		2^{\ul{e}-n_m}.
	\end{equation*}
\end{proposition}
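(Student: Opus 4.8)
The plan is to reduce the minimisation over $\M_0(n_m,\ul{e})\cap\R^+_{\neq 0}$ to a minimisation over the admissible mantissa sums. First I would observe that an element ${(-1)}^s\cdot 2^{\ul{e}}\cdot\sum_{i=1}^{n_m}d_i 2^{-i}$ of $\M_0(n_m,\ul{e})$ is positive and non-zero precisely when $s=0$ and $d\neq 0$, i.e.\ at least one digit $d_j$ equals $1$. Since the common factor $2^{\ul{e}}>0$ is fixed, the problem becomes: minimise $\sum_{i=1}^{n_m}d_i 2^{-i}$ over $d\in{\{0,1\}}^{n_m}$ with $d\neq 0$.

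Next I would establish the lower bound $\sum_{i=1}^{n_m}d_i 2^{-i}\ge 2^{-n_m}$ for every such $d$. If $d\neq 0$, pick any index $j\in\{1,\dots,n_m\}$ with $d_j=1$; then all summands are non-negative, so the sum is at least $d_j 2^{-j}=2^{-j}\ge 2^{-n_m}$, the last inequality holding because $j\le n_m$ and $t\mapsto 2^{-t}$ is decreasing. This bound is attained by the choice $d=(0,\dots,0,1)$, which indeed lies in ${\{0,1\}}^{n_m}$ and is non-zero, so the minimum of the mantissa sums equals $2^{-n_m}$.

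Finally I would multiply back by $2^{\ul{e}}$ and record that the minimiser $(s,d)=(0,(0,\dots,0,1))$ produces the element $2^{\ul{e}}\cdot 2^{-n_m}=2^{\ul{e}-n_m}$, which is therefore the minimum of $\M_0(n_m,\ul{e})\cap\R^+_{\neq 0}$. The existence of the minimum needs no separate argument because ${\{0,1\}}^{n_m}$ is finite, hence so is the set of candidate values. There is no genuine obstacle here; the only point demanding a line of care is the lower bound on the dyadic mantissa sum, and even that is immediate once one discards all but one positive summand.
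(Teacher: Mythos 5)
Your proposal is correct and follows the same route as the paper: restrict to $s=0$ and $d\neq 0$, factor out $2^{\ul{e}}$, and identify the minimal mantissa sum $2^{-n_m}$ attained at $d=(0,\dots,0,1)$. The paper's own proof simply asserts this minimum in one line, whereas you supply the lower-bound argument and the explicit minimiser that it leaves implicit.
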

\begin{proof}
	Let $0 \neq d \in {\{ 0,1 \}}^{n_m}$. It follows that
	\begin{equation*}
		\min\left(\M_0(n_m,\ul{e})\cap \R^+_{\neq 0}\right) =
		\min\left((-1)^0 \cdot 2^{\ul{e}} \cdot
		\left[0+\sum_{i=1}^{n_m} d_i \cdot 2^{-i}\right] \right) =
		2^{\ul{e}} \cdot 2^{-n_m} = 2^{\ul{e}-n_m}.\qedhere
	\end{equation*}
\end{proof}
\begin{proposition}[smallest positive normal floating-point number]
	Let $n_m \in \N$ and $\ul{e},\ol{e}\in \Z$.
	The smallest positive normal floating-point number is
	\begin{equation*}
		\min\left(\M_1(n_m,\ul{e},\ol{e})\cap \R^+_{\neq 0}
		\right) = 2^{\ul{e}}.
	\end{equation*}
\end{proposition}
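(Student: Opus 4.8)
The plan is to argue that every positive element of $\M_1(n_m,\ul{e},\ol{e})$ factors as $2^e$ times a mantissa lying in $[1,2)$, so that both factors are independently minimized by the obvious choices. First I would restrict to positive elements: since $(-1)^s > 0$ forces $s = 0$, any element of $\M_1(n_m,\ul{e},\ol{e}) \cap \R^+_{\neq 0}$ has the form $2^e \cdot \bigl(1 + \sum_{i=1}^{n_m} d_i \cdot 2^{-i}\bigr)$ with $\ul{e} \le e \le \ol{e}$ and $d \in \{0,1\}^{n_m}$.

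Next I would establish the two elementary bounds. Every summand $d_i \cdot 2^{-i}$ is nonnegative, so the mantissa satisfies $1 + \sum_{i=1}^{n_m} d_i \cdot 2^{-i} \ge 1$, with equality precisely when $d = 0$. Independently, $e \ge \ul{e}$ gives $2^e \ge 2^{\ul{e}}$, with equality precisely when $e = \ul{e}$. Multiplying these (all quantities being positive) yields
\begin{equation*}
	2^e \cdot \left( 1 + \sum_{i=1}^{n_m} d_i \cdot 2^{-i} \right)
	\ge 2^{\ul{e}}
\end{equation*}
for every positive element, so $2^{\ul{e}}$ is a lower bound for $\M_1(n_m,\ul{e},\ol{e}) \cap \R^+_{\neq 0}$.

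Finally I would check that this lower bound is attained and is itself a legitimate positive normal floating-point number: taking $s = 0$, $e = \ul{e}$ and $d = 0$ produces exactly $2^{\ul{e}} \cdot (1 + 0) = 2^{\ul{e}} > 0$, which lies in the set. Hence the minimum exists and equals $2^{\ul{e}}$. There is no real obstacle here; the only point meriting care is the separation of the value into exponent and mantissa factors and the observation that they can be optimized simultaneously, together with verifying membership of the candidate minimizer so that we may speak of a minimum rather than merely an infimum.
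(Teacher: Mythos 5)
Your proof is correct and follows essentially the same route as the paper: factor each positive element into $2^e$ times a mantissa in $[1,2)$, minimize both factors independently, and note attainment at $e=\ul{e}$, $d=0$. You are merely more explicit (and arguably more careful) than the paper's one-line computation, which even states the hypothesis $d\neq 0$ although the minimum is attained precisely at $d=0$.
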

\begin{proof}
	Let $0 \neq d \in {\{ 0,1 \}}^{n_m}$ and $\ul{e}\le e\le\ol{e}$. It follows
	that
	\begin{equation*}
		\min\left(\M_1(n_m,\ul{e},\ol{e})\cap \R^+_{\neq 0}\right) =
		\min\left((-1)^0 \cdot 2^{e} \cdot
		\left[1 + \sum_{i=1}^{n_m} d_i \cdot 2^{-i}\right]
		\right) = 2^{\ul{e}}.\qedhere
	\end{equation*}
\end{proof}
\begin{proposition}[largest normal floating-point number]
	Let $n_m \in \N$ and $\ul{e},\ol{e}\in \Z$. The largest normal floating-point
	number is
	\begin{equation*}
		\max\left(\M_1(n_m,\ul{e},\ol{e})\right) =
		2^{\ol{e}}\cdot\left( 2-2^{-n_m}\right).
	\end{equation*}
\end{proposition}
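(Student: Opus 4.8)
The plan is to maximise the defining expression
\[
	(-1)^s \cdot 2^e \cdot \left( 1 + \sum_{i=1}^{n_m} d_i \cdot 2^{-i} \right)
\]
over all admissible parameters $s \in \{0,1\}$, $\ul{e} \le e \le \ol{e}$ and $d \in \{0,1\}^{n_m}$ by observing that it is a product of three factors, each governed by a disjoint group of the free parameters, and that the last two factors are strictly positive. Hence the product is maximised exactly by maximising each factor on its own, and I would treat the three factors in turn.

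First, $(-1)^s \le 1$ with equality iff $s = 0$; since $2^e$ and $1 + \sum_{i=1}^{n_m} d_i 2^{-i}$ are strictly positive, any maximiser must have $s = 0$, and the maximal value of the sign factor is $1$. Second, $e \mapsto 2^e$ is strictly increasing, so on the range $\ul{e} \le e \le \ol{e}$ it attains its maximum $2^{\ol{e}}$ precisely at $e = \ol{e}$. Third, each summand $d_i 2^{-i}$ is maximised over $d_i \in \{0,1\}$ by $d_i = 1$, so the mantissa factor satisfies $1 + \sum_{i=1}^{n_m} d_i 2^{-i} \le 1 + \sum_{i=1}^{n_m} 2^{-i}$, with equality for $d = (1,\dots,1)$.

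It then remains to evaluate the finite geometric sum $\sum_{i=1}^{n_m} 2^{-i} = 1 - 2^{-n_m}$, which gives a maximal mantissa factor of $2 - 2^{-n_m}$. Combining the three maxima yields
\[
	\max\left(\M_1(n_m,\ul{e},\ol{e})\right)
	= 1 \cdot 2^{\ol{e}} \cdot \left( 2 - 2^{-n_m} \right),
\]
and the parameter tuple $(s,e,d) = (0, \ol{e}, (1,\dots,1))$ shows this value is actually attained, so the maximum exists. There is no real obstacle here; the only point deserving an explicit remark is the justification that the joint maximum equals the product of the individual maxima, which is valid because the parameter choices are independent and all factors are nonnegative.
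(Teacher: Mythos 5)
Your proof is correct and takes essentially the same route as the paper: identify the maximiser $(s,e,d)=(0,\ol{e},(1,\dots,1))$ and evaluate the resulting finite geometric sum to get $2^{\ol{e}}\cdot(2-2^{-n_m})$. You merely make explicit the factorisation argument (positivity of the factors and independence of the parameter groups) that the paper leaves implicit, which is a harmless refinement rather than a different approach.
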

\begin{proof}
	Let $d \in {\{ 0,1 \}}^{n_m}$ and $\ul{e}\le e\le\ol{e}$. It follows
	with the finite geometric series that
	\begin{align*}
		\max\left(\M_1(n_m,\ul{e},\ol{e})\right) &= \max\left(
			{(-1)}^s \cdot 2^e \cdot \left[ 1 +
			\sum_{i=1}^{n_m} d_i \cdot 2^{-i}
			\right]\right) \\
		&= (-1)^0 \cdot 2^{\ol{e}} \cdot
			\left( 1+\sum_{i=1}^{n_m} 2^{-i} \right) \\
		&= 2^{\ol{e}} \cdot \sum_{i=0}^{n_m} 2^{-i} \\
		&= 2^{\ol{e}} \cdot \sum_{i=0}^{n_m} 
			\left(\frac{1}{2}\right)^{i} \\
		&= 2^{\ol{e}} \cdot \frac{1-
			\left(\frac{1}{2}\right)^{n_m+1}}{1-\frac{1}{2}} \\
		&= 2^{\ol{e}}\cdot\left( 2-2^{-n_m}\right) \qedhere
	\end{align*}
\end{proof}
\begin{proposition}[number of $\nan$ representations]\label{prop:nonr}
	Let $n_m \in \N$ and $\ul{e},\ol{e} \in \Z$. The number
	of $\nan$ representations is
	\begin{equation*}
		|\nan|(n_m) := \left|\bigg\{
		\tilde{x}(s,e,d)\in\M(n_m,\ul{e}-1,\ol{e}+1) \,
		\bigg| \, \tilde{x} = \nan
		\bigg\}\right| = 2^{n_m+1}-2.
	\end{equation*}
\end{proposition}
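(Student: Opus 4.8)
The statement counts \emph{representations} rather than distinct values, so the plan is to count admissible parameter triples $(s,e,d)$ whose image under $\tilde{x}$ is $\nan$, i.e.\ to compute the cardinality of the fibre $\tilde{x}^{-1}(\nan)$. By the case distinction in Definition~\ref{def:sofpn}, a triple $(s,e,d)$ with $s\in\{0,1\}$ and $d\in\{0,1\}^{n_m}$ yields $\tilde{x}(s,e,d)=\nan$ if and only if $e=\ol{e}+1$ and $d\neq 0$; the other three branches ($e=\ul{e}-1$, $\ul{e}\le e\le\ol{e}$, and $e=\ol{e}+1$ with $d=0$) never produce $\nan$, so the fibre is exactly the set of triples satisfying these two conditions.

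From there it is a plain product count. With $e$ forced to the single value $\ol{e}+1$, the sign $s$ still ranges freely over $\{0,1\}$, contributing a factor of $2$, and $d$ ranges over $\{0,1\}^{n_m}\setminus\{0\}$, which has $|\{0,1\}^{n_m}|-1 = 2^{n_m}-1$ elements. Hence
\begin{equation*}
	|\nan|(n_m) = 2\cdot\left(2^{n_m}-1\right) = 2^{n_m+1}-2,
\end{equation*}
which is the claimed formula.

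There is no real obstacle here; the only point that deserves a sentence of care is the distinction between a NaN \emph{value} and a NaN \emph{bit pattern} — all these triples collapse to the single symbol $\nan$, so the proposition is genuinely a statement about the redundancy of the encoding, and the set in the proposition statement is written accordingly as a set of triples $\tilde{x}(s,e,d)$ constrained by $\tilde{x}=\nan$. Once that is acknowledged, the argument is just the elementary enumeration above, and I would present it in two or three lines.
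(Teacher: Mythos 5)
Your proposal is correct and follows essentially the same route as the paper: identify from Definition~\ref{def:sofpn} that $\tilde{x}(s,e,d)=\nan$ precisely when $e=\ol{e}+1$ and $d\neq 0$, then count the $2^{n_m}-1$ choices for $d$ times the $2$ choices for $s$. The remark distinguishing NaN values from NaN bit patterns is a nice clarification but does not change the argument.
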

\begin{proof}
	Let $0 \neq d \in {\{ 0,1 \}}^{n_m}$. It follows from Defintion~\ref{def:sofpn}
	that
	\begin{equation*}
		\tilde{x}(s,e,d) = \nan
		\quad \Leftrightarrow \quad  e = \ol{e} + 1 \wedge
		d \neq 0.
	\end{equation*}
	This means that there are $2^{n_m}-1$ possible choices for $d$,
	yielding with the arbitrary $s\in\{0,1\}$ that
	\begin{equation*}
		|\nan|(n_m) = 2\cdot (2^{n_m}-1) = 2^{n_m+1}-2.\qedhere
	\end{equation*}
\end{proof}
\section{Memory Structure}
It is in our interest to map $\M(n_m,\ul{e}-1,\ol{e}+1)$ into a memory
region, more specifically a bit array. The format defined by the IEEE
754-2008 standard is shown in Figure~\ref{fig:ieeemem},
where $n_e$ stands for the number of bits in the exponent, $n_m$ for the bits
in the mantissa and the leading single bit is reserved for the sign bit.
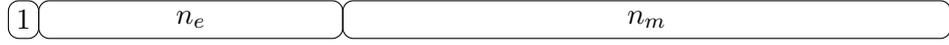
\begin{figure}[htpb]
	\centering
	\begin{tikzpicture}
		\draw[rounded corners] (0,  0  ) rectangle (0.4,0.5) node[pos=.5] {$1$};
		\draw[rounded corners] (0.4,0  ) rectangle (4.4,0.5) node[pos=.5] {$n_e$};
		\draw[rounded corners] (4.4,0  ) rectangle (12.4,0.5) node[pos=.5] {$n_m$};
	\end{tikzpicture}
	\caption{IEEE~754 Floating-point memory layout; see \cite[Figure 3.1]{ieee08}.}
	\label{fig:ieeemem}
\end{figure}
\par
Handling the exponent just as an unsigned integer would not allow
the use of negative exponents. To solve this, the so called
\emph{exponent bias} was introduced in the IEEE~754 standard, which is the
value $2^{n_e-1}-1$ subtracted from the unsigned value of the exponent
(see \cite[Section~3.4b]{ieee08}) and should not be confused with the
\emph{two's complement}, the usual way to express signed integers in a
machine. Looking at the exponent values, the exponent bias results in
\begin{equation*}
	\ul{e}-1 = -2^{n_e-1}+1 \le e \le 2^{n_e}-2^{n_e-1} = \ol{e} + 1
\end{equation*}
and thus
\begin{equation*}
	(\ul{e},\ol{e}) = (-2^{n_e-1}+2, 2^{n_e}-2^{n_e-1}-1) =
	(-2^{n_e-1}+2, 2^{n_e-1}-1).
\end{equation*}
This can be formally expressed as the
\begin{definition}[exponent bias]\label{def:eb}
	Let $n_e \in \N$. The exponent bias is defined as
	\begin{align*}
		\ul{e}(n_e) &:= -2^{n_e-1}+2 \\
		\ol{e}(n_e) &:= 2^{n_e-1}-1.
	\end{align*}
\end{definition}
With the exponent bias representation, we know how many exponent
values can be assumed. Because of that it is now possible to determine the
\begin{proposition}[number of normal floating-point numbers]
	Let $n_m, n_e \in \N$. The number of normal floating-point numbers is
	\begin{equation*}
		|\M_1(n_m,\ul{e}(n_e),\ol{e}(n_e))| = 2^{1+n_e+n_m}-2^{n_m+2}.
	\end{equation*}
\end{proposition}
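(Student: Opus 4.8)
The plan is to exhibit the defining expression in Definition~\ref{def:sonfpn} as a bijection between the index set $\{0,1\}\times\{e\in\Z\mid\ul{e}(n_e)\le e\le\ol{e}(n_e)\}\times\{0,1\}^{n_m}$ and the set $\M_1(n_m,\ul{e}(n_e),\ol{e}(n_e))$, so that counting normal floating-point numbers reduces to a product count over these triples. Surjectivity is immediate from Definition~\ref{def:sonfpn}, so the only real work is injectivity.

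For injectivity I would write $m(d):=1+\sum_{i=1}^{n_m}d_i2^{-i}$ and first note, via the finite geometric series, that $1\le m(d)=1+\sum_{i=1}^{n_m}d_i2^{-i}\le 1+(1-2^{-n_m})<2$. Hence every $x=(-1)^s\cdot 2^e\cdot m(d)\in\M_1$ is nonzero with $|x|\in[2^e,2^{e+1})$, which forces $s=\sgn(x)$ expressed via $s\in\{0,1\}$, forces $e=\lfloor\log_2|x|\rfloor$ as the unique integer with $2^e\le|x|<2^{e+1}$, and then determines $m(d)=|x|\,2^{-e}$. Finally $d$ is recovered from $m(d)-1=\sum_{i=1}^{n_m}d_i2^{-i}\in[0,1)$ by uniqueness of the finite dyadic expansion: if $d\neq d'$ and $j$ is the least index with $d_j\neq d'_j$, say $d_j=1$ and $d'_j=0$, then $\sum_{i=1}^{n_m}(d_i-d'_i)2^{-i}\ge 2^{-j}-\sum_{i=j+1}^{n_m}2^{-i}>0$, so $m(d)\neq m(d')$. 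This establishes the bijection.

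It then remains to count the index set. There are $2$ choices of $s$ and $2^{n_m}$ choices of $d\in\{0,1\}^{n_m}$, and by Definition~\ref{def:eb} the admissible exponents number $\ol{e}(n_e)-\ul{e}(n_e)+1=(2^{n_e-1}-1)-(-2^{n_e-1}+2)+1=2^{n_e}-2$. Multiplying gives $|\M_1(n_m,\ul{e}(n_e),\ol{e}(n_e))|=2\cdot(2^{n_e}-2)\cdot 2^{n_m}=2^{n_m+1}(2^{n_e}-2)=2^{1+n_e+n_m}-2^{n_m+2}$. The main obstacle—really the only substantive point—is the injectivity step, and within it the uniqueness of the finite binary expansion of the fractional part $m(d)-1$; the significand bound and the exponent count are routine consequences of the finite geometric series and Definition~\ref{def:eb}.
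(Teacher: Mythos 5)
Your proposal is correct, and its counting step coincides exactly with the paper's proof, which simply multiplies the $2$ sign choices, the $2^{n_e}-2$ admissible exponents computed from Definition~\ref{def:eb}, and the $2^{n_m}$ mantissa strings to get $2\cdot 2^{n_m}\cdot(2^{n_e}-2)=2^{1+n_e+n_m}-2^{n_m+2}$. The injectivity argument you supply (the significand lies in $[1,2)$, so $s$ and $e=\lfloor\log_2|x|\rfloor$ are forced and $d$ is recovered by uniqueness of the finite dyadic expansion) does not appear in the paper at all — the paper tacitly identifies $|\M_1|$ with the cardinality of its parameter set — so your version is strictly more complete while following the same route.
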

\begin{proof}
	According to Definition~\ref{def:sonfpn} there are
	\begin{equation*}
		\ol{e}(n_e)-\ul{e}(n_e)+1 = 2^{n_e-1}-1+2^{n_e-1}-2+1 =
		2^{n_e}-2
	\end{equation*}
	different exponents for $\M_1(n_m,\ul{e}(n_e),\ol{e}(n_e))$. Given
	$d \in {\{ 0,1 \}}^{n_m}$ and $s\in\{0,1\}$ are arbitrary it follows that
	\begin{equation*}
		|\M_1(n_m,\ul{e}(n_e),\ol{e}(n_e))| = 2\cdot 2^{n_m}\cdot(2^{n_e}-2)
		= 2^{1+n_e+n_m}-2^{n_m+2}. \qedhere
	\end{equation*}
\end{proof}
\begin{proposition}[number of subnormal floating-point numbers]
	Let $n_m, n_e \in \N$. The number of subnormal floating-point numbers is
	\begin{equation*}
		|\M_0(n_m,\ul{e}(n_e))| = 2^{n_m+1}.
	\end{equation*}
\end{proposition}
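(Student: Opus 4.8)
The plan is to mirror the counting argument used for the number of normal floating-point numbers, the key structural difference being that Definition~\ref{def:sosfpn} pins the exponent of every subnormal number to the single value $\ul{e}$ (equivalently, in the memory encoding of Definition~\ref{def:sofpn}, the biased exponent field is forced to the reserved value $e=\ul{e}-1$). So there is no exponent factor to account for, and the count reduces to enumerating the remaining free parametres.

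First I would record that, with the exponent fixed, a subnormal number in $\M_0(n_m,\ul{e}(n_e))$ is built from the sign bit $s\in\{0,1\}$ and the mantissa bit vector $d\in\{0,1\}^{n_m}$, both of which range without restriction. This yields
\begin{equation*}
	|\M_0(n_m,\ul{e}(n_e))| = 2\cdot 2^{n_m} = 2^{n_m+1},
\end{equation*}
since there are $2$ choices for $s$ and $2^{n_m}$ choices for $d$, which is exactly the bookkeeping of the preceding proposition with the extra factor $2^{n_e}-2$ of admissible exponents stripped away.

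The one point that needs a word is injectivity of the parametrisation $(s,d)\mapsto (-1)^s\cdot 2^{\ul{e}}\cdot\sum_{i=1}^{n_m} d_i 2^{-i}$: for $d\neq 0$ the sign and the bits are recovered uniquely, just as in the normal case, and the only coincidence is $d=0$, where $s=0$ and $s=1$ both produce $0$. Consistent with the convention used throughout this section — counting the distinct $(s,e,d)$ configurations, i.e. the IEEE~754 bit patterns, which keep $+0$ and $-0$ apart — this coincidence is not collapsed and the total stays $2^{n_m+1}$; I would merely remark that identifying the two signed zeros instead lowers the count by one. Settling this signed-zero convention is really the only obstacle, since the arithmetic itself is immediate.
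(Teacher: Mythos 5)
Your proof is correct and takes essentially the same route as the paper, which simply multiplies the $2$ choices of sign by the $2^{n_m}$ choices of mantissa with the exponent fixed at $\ul{e}$. Your remark on the signed-zero coincidence is a subtlety the paper's proof silently glosses over (as a set of real numbers $\M_0$ would have one element fewer, since $+0=-0$), so making the bit-pattern convention explicit is if anything more careful than the original.
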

\begin{proof}
	According to Definition~\ref{def:sosfpn} it follows with arbitrary
	$d \in {\{ 0,1 \}}^{n_m}$ and $s\in\{0,1\}$ that
	\begin{equation*}
		|\M_0(n_m,\ul{e}(n_e))|=2\cdot 2^{n_m}=2^{n_m+1}.\qedhere
	\end{equation*}
\end{proof}
\begin{proposition}[number of floating-point numbers]
	Let $n_m,n_e \in \N$. The number of floating point numbers is
	\begin{equation*}
		|\M(n_m,\ul{e}(n_e)+1,\ol{e}(n_e)-1)| = 2^{1+n_e+n_m}.
	\end{equation*}
\end{proposition}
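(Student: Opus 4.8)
The plan is to reduce everything to the four-way case distinction of Definition~\ref{def:sofpn}. Every floating-point representation $\tilde{x}(s,e,d)$ lies in exactly one of the branches listed there, the branch being selected by the value of the exponent field $e$, which by the exponent bias of Definition~\ref{def:eb} runs through all integers from $\ul{e}(n_e)-1$ up to $\ol{e}(n_e)+1$. Since these branches partition the set of parameter tuples $(s,e,d)$, the desired cardinality is just the sum of the four cardinalities already established in the preceding propositions.

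First I would record the four contributions, all obtained above: the branch $e=\ul{e}(n_e)-1$ contributes the $2^{n_m+1}$ subnormal representations; the branch $\ul{e}(n_e)\le e\le\ol{e}(n_e)$ contributes the $2^{1+n_e+n_m}-2^{n_m+2}$ normal representations; the branch $e=\ol{e}(n_e)+1$ with $d=0$ contributes the two infinities $(-1)^s\cdot\infty$, $s\in\{0,1\}$; and the branch $e=\ol{e}(n_e)+1$ with $d\neq 0$ contributes the $|\nan|(n_m)=2^{n_m+1}-2$ representations of $\nan$ from Proposition~\ref{prop:nonr}. Summing these,
\begin{equation*}
	2^{n_m+1}+\left(2^{1+n_e+n_m}-2^{n_m+2}\right)+2+\left(2^{n_m+1}-2\right)=2^{1+n_e+n_m},
\end{equation*}
since $2\cdot 2^{n_m+1}=2^{n_m+2}$ cancels the $-2^{n_m+2}$ and the $+2$ cancels the $-2$.

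As a sanity check, $2^{1+n_e+n_m}$ is exactly $2$ to the total bit width $1+n_e+n_m$ of the memory layout in Figure~\ref{fig:ieeemem}, as it must be since each bit pattern encodes precisely one representation. I do not expect a genuine obstacle; the only point needing care is purely bookkeeping — one must count \emph{representations} (the tuples $(s,e,d)$, equivalently bit patterns) throughout and not distinct real values, consistently with the way $|\M_0|$, $|\M_1|$ and $|\nan|$ were counted above, because, for instance, $(0,\ul{e}(n_e)-1,0)$ and $(1,\ul{e}(n_e)-1,0)$ are two distinct representations of the single value $0$.
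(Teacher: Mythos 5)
Your proof is correct and follows essentially the same route as the paper: both partition $\M$ according to the four branches of Definition~\ref{def:sofpn}, insert the previously computed cardinalities $|\M_0|=2^{n_m+1}$, $|\M_1|=2^{1+n_e+n_m}-2^{n_m+2}$, $|\infty|=2$ and $|\nan|=2^{n_m+1}-2$, and carry out the identical cancellation. Your closing remark about counting representations (bit patterns) rather than distinct real values is a worthwhile clarification that the paper leaves implicit.
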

\begin{proof}
	We define
	\begin{equation*}
		|\infty| := \left|\bigg\{
		\tilde{x}(s,e,d)\in\M(n_m,\ul{e}-1,\ol{e}+1) \,
		\bigg| \, \tilde{x} = \pm\infty
		\bigg\}\right| = 2
	\end{equation*}
	and conclude from Definition~\ref{def:sofpn} that
	\begin{align*}
		|\M(n_m,\ul{e}(n_e)+1,\ol{e}(n_e)-1)| &=
			|\M_0(n_m,\ul{e}(n_e))| +
			|\M_1(n_m,\ul{e}(n_e),\ol{e}(n_e))| +
			|\infty| + |\nan| \\
		&= 2^{n_m+1} + 2^{1+n_e+n_m}-2^{n_m+2} + 2 +2^{n_m+1}-2 \\
		&= 2^{1+n_e+n_m} + 2^{n_m+1} + 2^{n_m+1} - 2\cdot 2^{n_m+1} \\
		&= 2^{1+n_e+n_m}.
	\end{align*} \qedhere
\end{proof}
Excluding the extended precisions above 64 bit, the
IEEE~754 standard defines three storage sizes for floating-point
numbers (see \cite[Section~3.6]{ieee08}), parametrised by $n_m$ and $n_e$,
as can be seen in Table~\ref{table:floatsizes}.
Half precision floating-point numbers (binary16) were introduced in
IEEE~754-2008 and are just meant to be a storage format and not
used for arithmetic operations given the low dynamic range.
\begin{table}[htpb]
	\centering
	\begin{tabular}{ | l || l | l | l | }
		\hline
		precision & half (binary16) & single (binary32) &
			double (binary64) \\
		\hline\hline
		storage size (bit) & 16 & 32 & 64 \\
		\hline
		$n_e$ (bit) & 5  & 8  & 11 \\
		\hline
		$n_m$ (bit) & 10 & 23 & 52 \\
		\hline
		\hline
		exponent bias & 15 & 127 & 1023 \\
		\hline
		$\ul{e}$ & -14 & -126 & -1022 \\
		\hline
		$\ol{e}$ &  15 &  127 &  1023 \\
		\hline
		$\min(\M_0\cap \R^+_{\neq 0})$ &
			$\approx 5.96 \cdot 10^{-8}$ &
			$\approx 1.40 \cdot 10^{-45}$ &
			$\approx 4.94 \cdot 10^{-324}$ \\
		\hline
		$\min(\M_1\cap \R^+_{\neq 0})$ &
			$\approx 6.10 \cdot 10^{-5}$ &
			$\approx 1.18 \cdot 10^{-38}$ &
			$\approx 2.23 \cdot 10^{-308}$ \\
		\hline
		$\max(\M_1)$ &
			$\approx 6.55 \cdot 10^{+4}$ &
			$\approx 3.40 \cdot 10^{+38}$ &
			$\approx 1.80 \cdot 10^{+308}$ \\
		\hline
		\hline
		$|\M_0|$ &
			$\approx 2.04 \cdot 10^{+3}$ &
			$\approx 1.68 \cdot 10^{+7}$ &
			$\approx 9.01 \cdot 10^{+15}$ \\
		\hline
		$|\M_1|$ &
			$\approx 6.14 \cdot 10^{+4}$ &
			$\approx 4.26 \cdot 10^{+9}$ &
			$\approx 1.84 \cdot 10^{+19}$ \\
		\hline
		$|\nan|$ &
			$\approx 2.05 \cdot 10^{+3}$ &
			$\approx 1.68 \cdot 10^{+7}$ &
			$\approx 9.01 \cdot 10^{+15}$ \\
		\hline
		$|\M|$ &
			$\approx 6.55 \cdot 10^{+4}$ &
			$\approx 4.29 \cdot 10^{+9}$ &
			$\approx 1.84 \cdot 10^{+19}$ \\
		\hline
		$|\nan|/|\M|$ ($\%$) &
			$\approx 3.12$ &
			$\approx 0.39$ &
			$\approx 0.05$ \\
		\hline
	\end{tabular}
	\caption{IEEE~754-2008 binary floating-point numbers
		up to $64$ bit with their characterizing properties.}
	\label{table:floatsizes}
\end{table}
\section{Rounding}
Given $\M(n_m,\ul{e}-1,\ol{e}+1)$ is a finite set, we need a way
to map arbitrary real values into it if we want floating-point
numbers to be a useful model of the real numbers. The IEEE~754
standard solves this with \emph{rounding}, an operation
mapping real values to preferrably close floating-point numbers
based on a set of rules (see \cite[Section~4.3]{ieee08}).
Given the different requirements depending on the task at hand,
the IEEE~754 standard defines five rounding rules. Two
based on rounding to the nearest value (see \cite[Section~4.3.1]{ieee08})
and three based on a directed approach (see \cite[Section~4.3.2]{ieee08}).
\subsection{Nearest}\label{subsec:rounding:nearest}
The most intuïtive approach is to just round to the nearest
floating-point number. In case of a tie though, there has to
be a rule in place to make a decision possible.
Two rules proposed by the IEEE~754 standard are \emph{tiing to
even} (also known as \emph{Banker's rounding}) and
\emph{tiing away from zero}. Only the first mode is
presented here, which is also the default rounding mode
(see \cite[Section~4.3.3]{ieee08}).
\par
This part of the standard is often misunderstood, resulting
in many publications not presenting nearest and tie to even rounding
as the standard rounding operation but nearest and tie away
from zero rounding, which is not correct but easy to overlook.
\begin{definition}[nearest and tie to even rounding]\label{def:natter}
	Let $n_m \in \N$, $\ul{e},\ol{e} \in \Z$ and $x\in\R$ with
	$(s,e,d) \in \{ 0,1 \} \times \Z \times
	{\{ 0,1\}}^{\N_0}$ satisfying
	\begin{equation*}
		x = {(-1)}^s \cdot 2^e \cdot \sum_{i=0}^{\infty}
		\left(d_i \cdot 2^{-i}\right).
	\end{equation*}
	The nearest and tie to even rounding reduction
	\begin{equation*}
		\rde \colon \R \to \M(n_m,\ul{e}-1,\ol{e}+1)
	\end{equation*}
	is defined for
	\begin{align*}
		\ul{x} &:= {(-1)}^s \cdot 2^e \cdot \sum\limits_{i=0}^{n_m}
			\left(d_i \cdot 2^{-i}\right) \\
		\ol{x} &:= {(-1)}^s \cdot 2^e \cdot \left[\sum\limits_{i=0}^{n_m}
			\left(d_i \cdot 2^{-i}\right) + 1\cdot 2^{-n_m}\right]
	\end{align*}	
	as
	\begin{equation*}
		x \mapsto
		\begin{cases}
			{(-1)}^{s} \cdot \infty & |x| \ge
				\max(\M_1)-
				\left(2^{\ol{e}}\cdot 2^{-n_m}\right) =
				2^{\ol{e}}\cdot\left(2-2^{(-n_m-1)}\right) \\
			\ol{x} & |x - \ol{x}| < |x - \ul{x}| \lor \left[
				|x - \ol{x}| = |x - \ul{x}| \land d_{n_m} = 1
				\right] \\
			\ul{x} & |x - \ol{x}| > |x - \ul{x}| \lor \left[
				|x - \ol{x}| = |x - \ul{x}| \land d_{n_m} = 0
				\right].
		\end{cases}
	\end{equation*}
\end{definition}
What this means is that if two nearest machine numbers $\ul{x}$
and $\ol{x}$ are equally close to $x$, the last mantissa bit
$d_{n_m}$ of $\ul{x}$ decides whether $x$ is rounded
to $\ul{x}$ or $\ol{x}$. For $d_{n_m}=0$ we know that $\ul{x}$ is
even and for $d_{n_m}=1$ it follows from the definition that
$\ol{x}$ is even.
\par
Tiing to even may seem like an arbitrary and complicated approach to
rounding, but its stochastic properties make it very useful to avoid
biased rounding-effects in only one direction. Given for a
set of rounding-operations the number of even and odd ties, if they
appear, will be roughly the same with the number of rounding-operations
approaching infinity, it results in a balanced behaviour of up- and
downrounding in tie-cases.
\subsection{Directed}\label{subsec:rounding:directed}
Another way to round numbers is a directed rounding approach to
a given orientation. The three modes have three distinct orientations:
Rounding \emph{toward zero}, \emph{upward} and \emph{downward}.
The first mode is not presented here.
\begin{definition}[upward rounding]\label{def:ur}
	Let $n_m \in \N$, $\ul{e},\ol{e} \in \Z$ and $x\in\R$ with
	$(s,e,d) \in \{ 0,1 \} \times \Z \times
	{\{ 0,1\}}^{\N_0}$ satisfying
	\begin{equation*}
		x = {(-1)}^s \cdot 2^e \cdot \sum_{i=0}^{\infty}
		\left(d_i \cdot 2^{-i}\right).
	\end{equation*}
	The upward rounding reduction
	\begin{equation*}
		\rd_{\uparrow} \colon \R \to \M(n_m,\ul{e}-1,\ol{e}+1)
	\end{equation*}
	is defined for $\ul{x},\ol{x}$ as in
	Definition~\ref{def:natter} as
	\begin{equation*}
		x \mapsto
		\begin{cases}
			\ul{x} & x < 0 \\
			\begin{cases} 
				\ul{x} & \forall i > n_m: d_i = 0 \\
				\ol{x} & \exists i > n_m: d_i = 1
			\end{cases} & x \ge 0.
		\end{cases}
	\end{equation*}
\end{definition}
\begin{definition}[downward rounding]\label{def:dr}
	Let $n_m \in \N$, $\ul{e},\ol{e} \in \Z$ and $x\in\R$ with
	$(s,e,d) \in \{ 0,1 \} \times \Z \times
	{\{ 0,1\}}^{\N_0}$ satisfying
	\begin{equation*}
		x = {(-1)}^s \cdot 2^e \cdot \sum_{i=0}^{\infty}
		\left(d_i \cdot 2^{-i}\right).
	\end{equation*}
	The downward rounding reduction
	\begin{equation*}
		\rd_{\downarrow} \colon \R \to \M(n_m,\ul{e}-1,\ol{e}+1)
	\end{equation*}
	is defined for $\ul{x},\ol{x}$ as in
	Definition~\ref{def:natter} as
	\begin{equation*}
		x \mapsto
		\begin{cases}
			\begin{cases} 
				\ul{x} & \forall i > n_m: d_i = 0 \\
				\ol{x} & \exists i > n_m: d_i = 1
			\end{cases} & x < 0 \\
			\ul{x} & x \ge 0.
		\end{cases}
	\end{equation*}
\end{definition}
The directed rounding modes are important for interval-arithmetic where
it is important not to round down the upper bound or
round up the lower bound of an interval. This way it is always
guaranteed that for $a,b\in\R$ and $a\le b$
\begin{equation}\label{eq:round-interval}
	[a,b] \subseteq [\rd_{\downarrow}(a), \rd_{\uparrow}(b)]
\end{equation}
is satisfied. The bounds may grow faster than by using
a to-nearest rounding mode, but it is guaranteed that the
solution lies inbetween them.
\section{Problems}\label{sec:ieeeproblems}
As with any numerical system, we can find problems exhibiting
its weaknesses. In this context we examine three different
kinds of problems.
Using the results obtained here it will allow us to evaluate
if and how good the Unum arithmetic solves these problems
respectively.
\subsection{The Silent Spike}\label{subsec:spike}
This example has been taken from \cite[§7]{ka06} and simplified.
Consider the function $f\colon\R\to\R$ defined as
\begin{equation}\label{eq:spike}
	f(x) := \ln(|3\cdot(1-x)+1|).
\end{equation}
It is easy to see that we hit a spike where
\begin{align*}
	& |3\cdot(1-x)+1| = 0 \\
	\Leftrightarrow \quad &
		3\cdot (1-x)+1 = 0 \\
	\Leftrightarrow \quad &
		3-3\cdot x+1 = 0 \\
	\Leftrightarrow \quad &
		x = \frac{4}{3}.
\end{align*}
More specifically,
\begin{equation*}
	\lim_{x\downarrow \frac{4}{3}}{\left(f(x)\right)} =
	\lim_{x\uparrow \frac{4}{3}}{\left(f(x)\right)} =
	-\infty.
\end{equation*}
Implementing this problem using IEEE 754 floating-point numbers
(see listing~\ref{lst:spike}),
we might expect to receive a very small number or even negative
infinity in an environment of $\frac{4}{3}$. However, this is not the
case.
\par
Instead, as you can see in Figure~\ref{fig:spike}, the program
claims that $f(\frac{4}{3})\approx -36.044$ is the minimum in
direct vicinity of $\frac{4}{3}$, completely hiding the fact that
$f$ is singular in $\frac{4}{3}$.
\begin{figure}[htpb]
	\centering
	\begin{tikzpicture}
		\begin{axis}[xlabel=$x-\frac{4}{3}$, ylabel=$f(x)$,
			         extra y ticks={-36.04365338911715355152},
			         extra y tick style={yticklabel pos=right,
			         	ytick pos=right},
		             yticklabel style={/pgf/number format/fixed,
		             	/pgf/number format/precision=3},
		             extra x ticks={-2.22044604925031308085e-15,
		                             2.22044604925031308085e-15},
		             extra x tick style={xticklabel pos=right,
			         	xtick pos=right},
			         extra tick style={grid=major}]
			\addplot[mark=x, smooth] coordinates {
				(-2.22044604925031308085e-15, -32.60966618463201172062)
				(-1.99840144432528177276e-15, -32.71144887894195107947)
				(-1.77635683940025046468e-15, -32.82477756424895432019)
				(-1.55431223447521915659e-15, -32.95261093575884103757)
				(-1.33226762955018784851e-15, -33.09921440995071861835)
				(-1.11022302462515654042e-15, -33.27106466687737196253)
				(-8.88178419700125232339e-16, -33.47870403165561725700)
				(-6.66133814775093924254e-16, -33.74106829612311031497)
				(-4.44089209850062616169e-16, -34.09774324006184542668)
				(-2.22044604925031308085e-16, -34.65735902799726630974)
				( 0.00000000000000000000e+00, -36.04365338911715355152)
				( 2.22044604925031308085e-16, -35.35050620855721348335)
				( 4.44089209850062616169e-16, -34.43421547668305748857)
				( 6.66133814775093924254e-16, -33.96421184743731913613)
				( 8.88178419700125232339e-16, -33.64575811631878821117)
				( 1.11022302462515654042e-15, -33.40459605950189825307)
				( 1.33226762955018784851e-15, -33.21044004506094182716)
				( 1.55431223447521915659e-15, -33.04792111556316314136)
				( 1.77635683940025046468e-15, -32.90815917318800387648)
				( 1.99840144432528177276e-15, -32.78555685109567718882)
				( 2.22044604925031308085e-15, -32.67635755913067896472)
			};
		\end{axis}
	\end{tikzpicture}
	\caption{Interpolated evaluations (demarked by crosses) of $f$
		(see (\ref{eq:spike})) in the neighbourhood of $\frac{4}{3}$ for
		all possible double floating-point numbers in
		$\left[\frac{4}{3}-2.22\cdot 10^{-15},\frac{4}{3}+2.22\cdot 10^{-15}
		\right]$ (see listing~\ref{lst:spike}).}
	\label{fig:spike}
\end{figure}
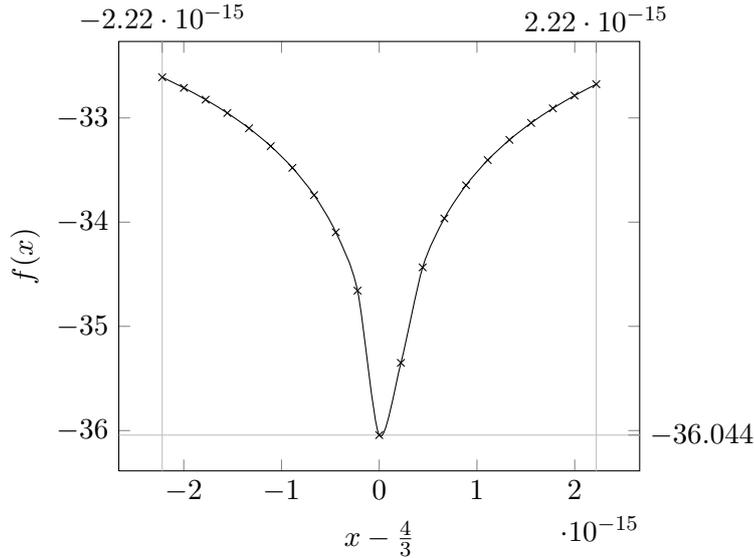
The reason why the floating-point implementation hides the
singularity is not that the logarithm implementation is faulty,
but because the value passed to the logarithm is off in the first
place. It is easy to see the singular point $\frac{4}{3}$ cannot
be exactly represented in the machine. This effect is increased
with rounding errors occuring during the evaluation
(see Listing~\ref{lst:spike}) of
\begin{equation*}
	\left|\rde\left\{
	\rde\left[\rde(3)\cdot\rde\left(\rde(1)-
	\rde\left(\frac{4}{3}\right)\right)\right]+\rde(1)
	\right\}\right| \approx
	2.2204 \cdot 10^{-16}.
\end{equation*}
In magnitude, this is relatively close to zero, but given
\begin{equation*}
	\ln(2.2204 \cdot 10^{-16}) \approx -36.0437
\end{equation*}
we not only see the significance of the rounding error, but also the
reason why the floating-point implementation claims that
$-36.044$ is the minimum of $f$ in direct vicinity of
$\frac{4}{3}$.
\par
This result indicates that there are simple examples where
floating-point numbers fail for piecewise continuous functions
with singularities.
Not being able to spot singularities for a given function might
have drastic consequences, for example \enquote{hiding} destructive
frequencies in resonance curves for the oscillation of bridge
stay cables, which are, for instance, derived in \cite{pi96}.
\subsection{Devil's Sequence}\label{subsec:devil}
This example has been taken from \cite[Chapter~1.3.2]{mu10}. Consider the
recurrent series ${\left\{ u_n\right\}}_{n\in\N_0}$ defined as
\begin{equation}\label{eq:devil}
	u_n :=
	\begin{cases}
		2 & n = 0\\
		-4 & n = 1 \\
		111 - \frac{1130}{u_{n-1}}+\frac{3000}{u_{n-1}\cdot u_{n-2}} &
			n \ge 2
	\end{cases}
\end{equation}
and determine the possible limits of this series, if they exist.
For this purpose, we assume convergence with $u:=u_n=u_{n-1}=u_{n-2}$
and obtain the characteristic polynomial relation
\begin{align*}
	& u = 111 - \frac{1130}{u}+\frac{3000}{u^2} \\
	\Leftrightarrow \quad &
		u^3 = 111 \cdot u^2 - 1130 \cdot u + 3000 \\
	\Leftrightarrow \quad &
		0 = u^3-111 \cdot u^2 + 1130 \cdot u - 3000
\end{align*}
with solutions $5$, $6$ and $100$. As further described in \cite[§5]{ka06}
for a similar recurrence, we obtain the general solution with $\alpha,\beta,
\gamma\in\R$ under the condition $|\alpha|+|\beta|+|\gamma|\neq 0$
\begin{equation}\label{eq:devil-closed-form}
	u_n = \frac{\alpha \cdot 100^{n+1}+\beta \cdot 6^{n+1}+\gamma \cdot
		5^{n+1}}{\alpha \cdot 100^n + \beta \cdot 6^n + \gamma \cdot 5^n}.
\end{equation}
For $u_0=2$ and $u_1=-4$ we obtain $\alpha=0$ and
$\gamma=-\frac{3}{4}\cdot\beta\neq 0$, resulting in
\begin{align*}
	u_n & = \frac{6^{n+1}-\frac{3}{4}\cdot 5^{n+1}}
		{6^n-\frac{3}{4}\cdot 5^n} \\
	& = \frac{6^{n+1}-\frac{3}{4}\cdot
		{\left(\frac{5}{6}\cdot 6\right)}^{n+1}}
		{6^n-\frac{3}{4}\cdot
		{\left(\frac{5}{6}\cdot 6\right)}^{n}} \\
	& = \frac{6^{n+1}}{6^n} \cdot
		\frac{1-\frac{3}{4}\cdot
		{\left( \frac{5}{6}\right)}^{n+1}}
		{1 -\frac{3}{4}\cdot
		{\left( \frac{5}{6} \right)}^{n}} \\
	& = 6 \cdot
		\frac{1-\frac{3}{4}\cdot
		{\left( \frac{5}{6}\right)}^{n+1}}
		{1 -\frac{3}{4}\cdot
		{\left( \frac{5}{6} \right)}^{n}}.
\end{align*}
It follows that
\begin{equation*}
	\lim_{n\to\infty}{(u_n)} = 6.
\end{equation*}
If we take a look at the floating-point implementation (see
listing~\ref{lst:devil}) of this problem, we can observe
a rather peculiar behaviour:
Figure~\ref{fig:devil} shows that the IEEE~754-based
solver behaves completely opposite from what one might expect.
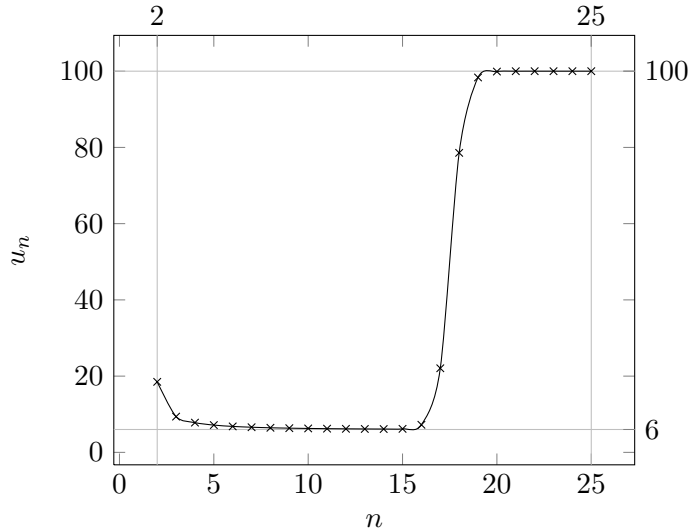
\begin{figure}[htpb]
	\centering
	\begin{tikzpicture}
		\begin{axis}[xlabel=$n$, ylabel=$u_n$,
		             extra x ticks={2,25},
		             extra x tick style={xticklabel pos=right,
			         	xtick pos=right},
			         extra y ticks={6,100},
			         extra y tick style={yticklabel pos=right,
			         	ytick pos=right},
			         extra tick style={grid=major}]
			\addplot[mark=x, smooth] coordinates {
				(   2,  18.500000) (   3,   9.378378)
				(   4,   7.801153) (   5,   7.154414)
				(   6,   6.806785) (   7,   6.592633)
				(   8,   6.449466) (   9,   6.348452)
				(  10,   6.274439) (  11,   6.218697)
				(  12,   6.175854) (  13,   6.142627)
				(  14,   6.120249) (  15,   6.166087)
				(  16,   7.235021) (  17,  22.062078)
				(  18,  78.575575) (  19,  98.349503)
				(  20,  99.898569) (  21,  99.993871)
				(  22,  99.999630) (  23,  99.999978)
				(  24,  99.999999) (  25, 100.000000)
			};
		\end{axis}
	\end{tikzpicture}
	\caption{Interpolated double floating-point evaluations
		(demarked by crosses) of the devil's sequence $u_n$
		(see (\ref{eq:devil})) for $n\in\{2,\ldots,25\}$
		(see listing~\ref{lst:devil}).}
	\label{fig:devil}
\end{figure}
Using the closed form (\ref{eq:devil-closed-form}) we have
shown that the recurrence (\ref{eq:devil}) converges to
$6$. However, even though the floating-point solver comes quite
close to $6$ up until $n=15$, it unexpectedly converges to $100$ in
subsequent iterations. The reason for that is found within
consecutive rounding errors of $u_n$, which skew the results so
far that the parametre $\alpha$ of the closed form
(\ref{eq:devil-closed-form}) becomes non-zero.
\par
The carefully chosen starting values $u_0=2$ and $u_1=-4$
deliberately make $\alpha$ disappear in
(\ref{eq:devil-closed-form}), which shows how even little rounding
errors can give completely wrong results for such a pathologic
example.
\subsection{The Chaotic Bank Society}\label{subsec:bank}
This example has been taken from \cite[Chapter~1.3.2]{mu10}.
Consider the recurrent series ${\left\{ a_n\right\}}_{n\in\N_0}$
defined for $a_0\in\R$ as
\begin{equation}\label{eq:bank}
	a_n :=
	\begin{cases}
		a_0 & n = 0\\
		a_{n-1} \cdot n - 1 & n \ge 1
	\end{cases}
\end{equation}
with the task being to determine $u_{25}$ for $a_0=e-1$.
\par
The name of this example can be derived by thinking of the series
as an imaginary offer by a bank to start with a
deposit of $e-1$ currency units and in each year for 25 years,
multiply it by the current running year number and subtract one
currency unit as banking charges.
\par
For a theoretical answer, we first want to find a closed form of
$u_n$. We observe the pattern
\begin{align*}
	a_0 &= a_0 &=~& 0! \cdot \left( a_0 \right) \\
	a_1 &= a_0 \cdot 1 - 1 &=~& 1! \cdot \left(
		a_0 - \frac{1}{1!} \right)\\
	a_2 &= \left( a_0 \cdot 1 - 1 \right) \cdot 2 - 1
		&=~& 2! \cdot \left( a_0 - \frac{1}{1!} -
		\frac{1}{2!} \right)\\
	a_3 &= \left[ \left( a_0 \cdot 1 - 1 \right)
		\cdot 2 - 1 \right] \cdot 3 - 1 &=~&
		3! \cdot \left( a_0 - \frac{1}{1!} -
		\frac{1}{2!} - \frac{1}{3!} \right).
\end{align*}
This leads us to the
\begin{proposition}[closed form of $a_n$]
	The closed form of the recurrent series (\ref{eq:bank}) is
	\begin{equation*}
		a_n = n! \cdot \left( a_0 - \sum_{k=1}^n{\frac{1}{k!}} \right)
	\end{equation*}
\end{proposition}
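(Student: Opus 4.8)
The plan is to prove the formula by straightforward induction on $n\in\N_0$, feeding the recursive definition \eref{eq:bank} into the induction hypothesis. No auxiliary constructions are needed; the whole argument is a bookkeeping exercise with factorials.

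For the base case $n=0$, I would note that the sum $\sum_{k=1}^{0}\frac{1}{k!}$ is empty and hence equals $0$, so that $0!\cdot\left(a_0-\sum_{k=1}^{0}\frac{1}{k!}\right) = 1\cdot(a_0-0) = a_0$, which agrees with the value $a_0$ prescribed by \eref{eq:bank}. For the inductive step, assuming the claim for some $n\in\N_0$, I would compute, using $a_{n+1}=a_n\cdot(n+1)-1$ and the induction hypothesis,
\begin{align*}
	a_{n+1} &= (n+1)\cdot n!\cdot\left(a_0-\sum_{k=1}^{n}\frac{1}{k!}\right) - 1 \\
	&= (n+1)!\cdot\left(a_0-\sum_{k=1}^{n}\frac{1}{k!}\right) - (n+1)!\cdot\frac{1}{(n+1)!} \\
	&= (n+1)!\cdot\left(a_0-\sum_{k=1}^{n+1}\frac{1}{k!}\right),
\end{align*}
which is exactly the asserted formula for $n+1$. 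The only mildly delicate move is rewriting the subtracted constant $1$ as $(n+1)!/(n+1)!$ so that it merges with the existing sum as its new final summand, together with the identity $(n+1)\cdot n! = (n+1)!$ for the factorial prefactor.

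I do not expect any genuine obstacle here: the statement is elementary and the induction closes immediately. The points that merit a line of explicit mention are the empty-sum convention in the base case and the factorial shift in the inductive step; everything else is routine.
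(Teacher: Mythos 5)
Your proposal is correct and follows essentially the same route as the paper: an induction on $n$ with the empty-sum convention handling the base case and the rewriting of the subtracted $1$ as $(n+1)!\cdot\frac{1}{(n+1)!}$ absorbing it into the sum in the inductive step. No differences worth noting.
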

\begin{proof}
	We prove the statement by induction over $n\in\N_0$.
	\begin{enumerate}[a)]
		\item{$a_0 = a_0 = 0! \cdot a_0$.}
		\item{Assume $a_n = n! \cdot \left( a_0 -
			\sum_{k=1}^n{\frac{1}{k!}} \right)$ holds true for
			an arbitrary but fixed $n\in\N$.}
		\item{Show $n\mapsto n+1$.
			\begin{align*}
				a_{n+1} & = a_n \cdot (n+1) - 1 \\
				& \overset{b)}{=}
					n! \cdot \left( a_0 -
					\sum_{k=1}^n{\frac{1}{k!}}
					\right) \cdot (n+1) - 1 \\
				& = (n+1)! \cdot \left(
					a_0 - \sum_{k=1}^n{\frac{1}{k!}} -
					\frac{1}{(n+1)!} \right) \\
				& = (n+1)! \cdot \left(
					a_0 - \sum_{k=1}^{n+1}{\frac{1}{k!}} \right)
					\qedhere
			\end{align*}
		}
	\end{enumerate}
\end{proof}
Using the closed form of $a_n$ and the definition of \textsc{Euler}'s
number, we get for a disturbed $a_0=(e-1)+\delta$ with $\delta\in\R$
\begin{align*}
	a_n &= n! \cdot \left( (e-1)+\delta- \sum_{k=1}^n{\frac{1}{k!}} \right)\\
	&= n! \cdot \left( \delta + e - 1 - \sum_{k=1}^n{\frac{1}{k!}} \right)\\
	&= n! \cdot \left( \delta + \sum_{k=0}^{+\infty}{\frac{1}{k!}} -
		\sum_{k=0}^n{\frac{1}{k!}} \right) \\
	&= n! \cdot \left( \delta + \sum_{k=n+1}^{+\infty}{\frac{1}{k!}}\right)\\
	&= n! \cdot \delta + \sum_{k=n+1}^{+\infty}{\frac{n!}{k!}}.
\end{align*}
It follows that
\begin{equation*}
	\lim_{n\to+\infty}{(a_n)} = \begin{cases}
		-\infty & \delta < 0 \\
		0 & \delta = 0 \\
		+\infty & \delta > 0
	\end{cases}
\end{equation*}
and, thus, we can assume $a_{25}\in(0,e-1)$ for an undisturbed $a_0=e-1$.
In regard to the
banking context this means that this offer would not be favourable for any
investor.
\par
A sloppy but quicker approach to get an answer to the problem is to write a
program based on IEEE~754 floating-point numbers to calculate the account balance
$a_{25}$ (see listing~\ref{lst:bank}). However, the answer it gives is
$a_{25} = 1201807247.410449$, suggesting a profitable offer by the bank,
which it clearly is not. The reason for this erratic behaviour is that
\begin{equation*}
	\rde(1.718281828459045235) > e - 1,
\end{equation*}
resulting in $\delta>0$ and $a_n$ going towards positive infinity.
\par
This example shows how rounding errors in floating-point arithmetic can lead
to false predictions and ultimately decisions, indicating the need
for guaranteed solution bounds.
As elaborated in Subsection~\ref{subsec:rounding:nearest}, the nearest and tie to even
rounding reduction has some advantages, but in cases like this can skew the
result undesiredly and unexpectedly due to the inhomogenous behaviour of rounding.
Because of that, using another constant expression for a value close to $e-1$ might
result in the answer going towards negative infinity.
\chapter{Interval Arithmetic}\label{ch:ia}
The foundation for modern interval arithmetic was set by Ramon E.
\textsc{Moore} in 1967 (see \cite{mo67}) as a means for automatic error
analysis in algorithms. Since then, the usage of interval arithmetic
beyond stability analysis was limited to some applications (see
\cite{mu06}, \cite{mo79} and \cite{mo09}),
which is also indicated by the
fact that the first IEEE standard for interval arithmetic, IEEE 1788-2015,
was published in 2015 (see \cite{ieee15}).
The standard is based on the ubiquitous \emph{affinely extended real numbers}
\begin{equation*}
	\ol{\R}:=\R \cup \{ +\infty\} \cup \{ -\infty \},
\end{equation*}
which this chapter will not make use of. Instead, the basis will be
the \emph{projectively extended real numbers}
\begin{equation*}
	\R^*:=\R\cup\{ \iffy \}.
\end{equation*}
The motivation for
this chapter is to find out how much we lose when only having one
symbol for infinity, and more importantly, what we gain in this process,
ultimately proving well-definedness of $\R^*$.
Based on the findings, it is in our interest to construct an interval
arithmetic on top of $\R^*$, which we can later use to formalise
the Unum arithmetic.
\section{Projectively Extended Real Numbers}\label{sec:pern}
With respect to simple reciprocation and negation of
numbers, the projectively extended real numbers come to
mind. Topologically speaking, this is the Alexandroff
compactification of $\R$ with the point $\iffy\not\in\R$
(see \cite[Section~25.4]{ko14} for further reading).
\par
As one can see in Figure~\ref{fig:Rschema}, the geometric
projection of $\R$ and infinity $\iffy$ onto a circle,
and thinking of reciprocation and negation as horizontal
and vertical reflections on this circle respectively, is
the ideal model in this context, presenting
an intuïtive approach to arithmetic operations on sets of
real numbers.
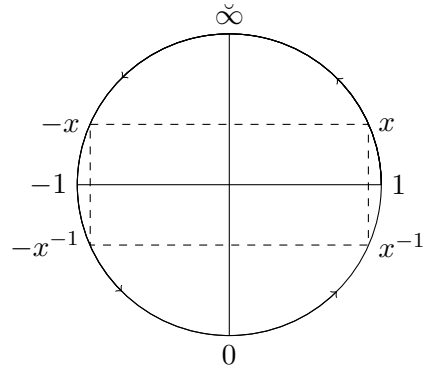
\begin{figure}[htpb]
	\centering
	\begin{tikzpicture}
		\draw[black,arrows={->}] (2,0) arc (0:315:2);
		\draw[black,arrows={->}] (2,0) arc (0:225:2);
		\draw[black,arrows={->}] (2,0) arc (0:135:2);
		\draw[black,arrows={->}] (2,0) arc (0:45:2);
		\draw (0,0) circle (2);
		\draw[dashed] (-1.83, 0.8) node[anchor=east] {$-x$}
		           -- ( 1.83, 0.8) node[anchor=west] {$x$}
		           -- ( 1.83,-0.8) node[anchor=west] {$x^{-1}$}
		           -- (-1.83,-0.8) node[anchor=east] {$-x^{-1}$}
		           -- cycle;
		\draw[      ] ( 0, -2) node[anchor=north] {$0$}
		           -- ( 0,  2) node[anchor=south] {$\iffy$};
		\draw[      ] (-2,  0) node[anchor=east]  {$-1$}
		           -- ( 2,  0) node[anchor=west]  {$1$};
	\end{tikzpicture}
	\caption{Schema of $\R^*$ with the counter-clockwise orientation
		indicated by arrows (see Definition~\ref{def:pern}).}
	\label{fig:Rschema}
\end{figure}
\par
Just like we can not definitely give the number $0$
a sign and just by convention denote it as a positive number,
there is no reason for its reciprocal $\iffy$ to have a sign.
As intuïtive as this approach is, rigorous results and a formal
definition are necessary to build a solid foundation for interval
arithmetic on the projectively extended real numbers.
In the course of the following chapter we are going to define
finite and infinite limits on the projectively extended real
numbers and show well-definedness of this extension in terms
of infinite limits. The formal definition of $\R^*$ is according
to \cite{re82}.
\begin{definition}[projectively extended real numbers]\label{def:pern}
	The projectively extended real numbers are defined as
	\begin{equation*}
		\R^* := \R \cup \{ \iffy \}.
	\end{equation*}
	The arithmetic operations $+$ and $\cdot$ are partially extended
	for $a,b \in \R$ with $b \neq 0$ to
	\begin{subequations}
		\begin{align}
			-(\iffy) &:= \iffy \label{eq:def:pern1}\\
			a + \iffy = \iffy + a  &:= \iffy \label{eq:def:pern2}\\
			b \cdot \iffy = \iffy \cdot b &:= \iffy \label{eq:def:pern3} \\
			a / \iffy &:= 0 \label{eq:def:pern4} \\
			b / 0 &:= \iffy. \label{eq:def:pern5}
		\end{align}
	\end{subequations}
	Left undefined are $\iffy + \iffy$, $\iffy \cdot \iffy$,
	$0 \cdot \iffy$, $0/0$, $\iffy/\iffy$ and $\iffy/0$.
\end{definition}
For more information on indeterminate forms on extensions
of the real numbers see \cite{tf95}.
\par
To be able to show well-definedness of the extension of the
arithmetic operations in $\R^*$ in terms of infinite limits,
we first have to introduce the concept of $\iffy$-infinite
limits on $\R^*$.
\subsection{Finite and Infinite Limits}
Since we can not use two signed symbols for infinity, namely $\pm\infty$,
directed limits can be specified with the direction of
approach to $\iffy$, from above or below, indicated by vertical arrows.
In this regard, ascension is interpreted in regard to the natural
order of $\R$, from smallest to largest number. Approaching
$\iffy$ from below corresponds to a limit toward
$+\infty$ on $\R$, approaching $\iffy$ from above corresponds to a limit
toward $-\infty$ on $\R$.
\par
There is no sacrifice in only having one symbol for infinity up
to this point, given $+\infty$ and $-\infty$ can only be approached
from one direction in standard analysis. Having one symbol that can be
approached from two directions fills the gap seamlessly for
finite limits.
\begin{definition}[$\iffy$-finite limit]\label{def:finlimits}
	Let $f\colon\R \to \R$. The $\iffy$-finite limit of $f$
	for $x$ approaching $\iffy$ is defined for $\ell \in \R$ as
	\begin{align*}
		\lim_{x\downarrow \iffy}{(f(x))} = \ell &
			\quad :\Leftrightarrow \quad
			\forall \varepsilon > 0 : \exists c \in \R :
			\forall x \in \R: x < c : |f(x) - \ell| < \varepsilon \\
		\lim_{x\uparrow \iffy}{(f(x))} = \ell &
			\quad :\Leftrightarrow \quad
			\forall \varepsilon > 0 : \exists c \in \R :
			\forall x \in \R: x > c : |f(x) - \ell| < \varepsilon \\
		\lim_{x\to \iffy}{(f(x))} = \ell &
			\quad :\Leftrightarrow \quad
			\lim_{x\downarrow \iffy}{(f(x))} = \ell \wedge
			\lim_{x\uparrow \iffy}{(f(x))} = \ell.
	\end{align*}
\end{definition}
\begin{remark}[standard-finite limit relationship]
	Let $f\colon\R \to \R$ and $\ell \in \R$. One can convert
	between standard-finite limits and $\iffy$-finite
	limits using the relations
	\begin{align*}
		\lim_{x\downarrow \iffy}{(f(x))} = \ell &
			\quad \Leftrightarrow \quad
			\lim_{x\to-\infty}{(f(x))} = \ell \\
		\lim_{x\uparrow \iffy}{(f(x))} = \ell &
			\quad \Leftrightarrow \quad
			\lim_{x\to+\infty}{(f(x))} = \ell.
	\end{align*}
\end{remark}
Besides finite limits, we also need a way to express when a function
diverges. In this regard, having only one infinity-symbol induces
some losses, as only the absolute values of the functions can be
evaluated. However, it still holds that if a function
diverges in standard-infinite limits it also diverges in
$\iffy$-infinite limits.
\begin{definition}[$\iffy$-infinite limit]\label{def:inflimits}
	Let $f\colon\R \to \R$. The $\iffy$-infinite limit of $f$
	for $x\in\R$ approaching $a \in \R$ is defined as
	\begin{align*}
		\lim_{x\downarrow a}{(f(x))} = \iffy &
			\quad :\Leftrightarrow \quad
			\forall \varepsilon > 0 : \exists \delta > 0 :
			0 < x - a < \delta \Rightarrow |f(x)|>\varepsilon \\
		\lim_{x\uparrow a}{(f(x))} = \iffy &
			\quad :\Leftrightarrow\quad
			\forall \varepsilon > 0 : \exists \delta > 0 :
			0 < a - x < \delta \Rightarrow |f(x)|>\varepsilon \\
		\lim_{x\to a}{(f(x))} = \iffy &
			\quad :\Leftrightarrow \quad
			\lim_{x\downarrow a}{(f(x))} = \iffy \wedge
			\lim_{x\uparrow a}{(f(x))} = \iffy,
	\end{align*}
	and for $x\in\R$ approaching $\iffy$ as
	\begin{align*}
		\lim_{x\downarrow \iffy}{(f(x))} = \iffy &
			\quad :\Leftrightarrow\quad
			\forall \varepsilon > 0 : \exists c \in \R :
			\forall x \in \R : x < c : |f(x)|>\varepsilon \\
		\lim_{x\uparrow \iffy}{(f(x))} = \iffy &
			\quad :\Leftrightarrow \quad
			\forall \varepsilon > 0 : \exists c \in \R :
			\forall x \in \R :x > c : |f(x)|>\varepsilon \\
		\lim_{x\to \iffy}{(f(x))} = \iffy &
			\quad :\Leftrightarrow \quad
			\lim_{x\downarrow \iffy}{(f(x))} = \iffy \wedge
			\lim_{x\uparrow \iffy}{(f(x))} = \iffy.
	\end{align*}
\end{definition}
\begin{remark}[standard-infinite limit relationship]
	Let $f\colon\R \to \R$ and $a\in\R$. One can convert
	between standard-infinite limits and $\iffy$-infinite
	limits using the relations
	\begin{align*}
		\lim_{x\downarrow a}{(f(x))} = \iffy &
			\quad \Leftarrow \quad
			\lim_{x\downarrow a}{(f(x))} = \pm\infty \\
		\lim_{x\uparrow a}{(f(x))} = \iffy &
			\quad \Leftarrow \quad
			\lim_{x\uparrow a}{(f(x))} = \pm\infty \\
		\lim_{x\downarrow \iffy}{(f(x))} = \iffy &
			\quad \Leftarrow \quad
			\lim_{x\to -\infty}{(f(x))} = \pm\infty \\
		\lim_{x\uparrow \iffy}{(f(x))} = \iffy &
			\quad \Leftarrow \quad
			\lim_{x\to +\infty}{(f(x))} = \pm\infty.
	\end{align*}
\end{remark}
\subsection{Well-Definedness}
We can now use our definitions of $\iffy$-finite and
$\iffy$-infinite limits to show that $\R^*$ with the
extensions given in Definition~\ref{def:pern} is well-defined
in terms of infinite limits.
\begin{theorem}[well-definedness of $\R^*$]
	$\R^*$ is well-defined in terms of infinite limits.
\end{theorem}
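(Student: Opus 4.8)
The statement asks us to show that the partial arithmetic extensions of Definition~\ref{def:pern} are consistent with the limit structure of $\R^*$, i.e.\ that each defining equation in \eref{eq:def:pern1}--\eref{eq:def:pern5} agrees with the $\iffy$-finite and $\iffy$-infinite limits just introduced. The plan is to treat "well-definedness in terms of infinite limits" as the requirement that whenever we substitute $\iffy$ into an arithmetic expression according to the table in Definition~\ref{def:pern}, the resulting value coincides with the limit of the corresponding real-valued expression as its argument tends to $\iffy$ (in the $\iffy$-finite or $\iffy$-infinite sense), and dually that $b/0:=\iffy$ matches the $\iffy$-infinite limit of $b/x$ as $x\to 0$. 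So the proof is a case analysis, one case per line of the definition, each reduced via the two \emph{standard limit relationship} remarks to a familiar fact of real analysis.

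First I would dispatch \eref{eq:def:pern4}, $a/\iffy:=0$: take $f(x)=a/x$ and observe $\lim_{x\to-\infty}f(x)=\lim_{x\to+\infty}f(x)=0$, so by the standard-finite limit remark $\lim_{x\to\iffy}(a/x)=0$, matching the definition. Next \eref{eq:def:pern5}, $b/0:=\iffy$ for $b\neq 0$: with $f(x)=b/x$ we have $\lim_{x\downarrow 0}f(x)=\pm\infty$ and $\lim_{x\uparrow 0}f(x)=\pm\infty$ (sign depending on $\sgn b$), so the standard-infinite limit remark gives $\lim_{x\to 0}(b/x)=\iffy$ — here the one-sided information is exactly what collapses cleanly, since $\iffy$ absorbs both signs. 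Then \eref{eq:def:pern2} and \eref{eq:def:pern3}: for $f(x)=a+x$ we get $\lim_{x\to\pm\infty}f(x)=\pm\infty$ hence $\lim_{x\to\iffy}(a+x)=\iffy$; for $f(x)=bx$ with $b\neq 0$, $\lim_{x\to\pm\infty}(bx)=\pm\infty$ hence $\lim_{x\to\iffy}(bx)=\iffy$. The condition $b\neq 0$ is precisely what rules out the indeterminate $0\cdot\iffy$. Finally \eref{eq:def:pern1}, $-(\iffy):=\iffy$, follows from $\lim_{x\to\pm\infty}(-x)=\mp\infty$, so $\lim_{x\to\iffy}(-x)=\iffy$. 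In each case I would also note, for completeness, that the six expressions left undefined ($\iffy+\iffy$, $\iffy\cdot\iffy$, $0\cdot\iffy$, $0/0$, $\iffy/\iffy$, $\iffy/0$) are genuinely indeterminate — their associated limits are not well-defined — by exhibiting two sequences of arguments yielding different limiting values, which is why the definition correctly leaves them out.

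The main obstacle is not any single computation but pinning down the \emph{precise meaning} of "well-defined in terms of infinite limits" so that the case analysis is actually a proof of something rather than a restatement of the definitions. The substance is the observation that the Alexandroff-style identification of $+\infty$ and $-\infty$ into a single $\iffy$ is compatible with arithmetic exactly on the domain where the two-sided limits in $\ol{\R}$ agree after forgetting signs — and that this domain is exactly the complement of the six indeterminate forms. Once that framing is fixed, every case is a two-line appeal to a standard limit together with the relationship remarks, and the proof is essentially bookkeeping; I would organise it as an enumerated list over \eref{eq:def:pern1}--\eref{eq:def:pern5} followed by the indeterminacy remarks, rather than as prose.
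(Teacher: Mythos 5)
Your verification checks only the \emph{canonical} representing function for each rule: for $a+\iffy$ you substitute $f(x)=a+x$, for $b\cdot\iffy$ you take $f(x)=bx$, for $b/0$ you take $f(x)=b/x$ near $0$, and so on. That establishes consistency of the definitions with one particular limit, but it is not what well-definedness means here and not what the paper proves. The paper's proof fixes \emph{arbitrary} functions $f_a,f_b,f_0,f_{\iffy}\colon\R\to\R$ with $f_a\to a$, $f_b\to b$, $f_0\to 0$ and $f_{\iffy}\to\iffy$ (all as $x\uparrow\iffy$, without loss of generality) and shows by explicit $\tilde{\varepsilon}$-estimates, using the triangle and reverse triangle inequalities, that e.g.\ $f_a+f_{\iffy}\to\iffy$, $f_b\cdot f_{\iffy}\to\iffy$, $f_a/f_{\iffy}\to 0$ and $f_b/f_0\to\iffy$ \emph{regardless of which representatives were chosen}. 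Independence of the choice of representatives is the content of the theorem; a single-substitution check does not deliver it.

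The gap is not merely one of generality: your criterion is internally inconsistent. Under canonical substitution one has $x+x=2x\to\iffy$, $x\cdot x\to\iffy$ and $x/x\to 1$, so your test would equally well ``validate'' the definitions $\iffy+\iffy:=\iffy$, $\iffy\cdot\iffy:=\iffy$ and $\iffy/\iffy:=1$ --- exactly the forms that Definition~\ref{def:pern} deliberately leaves undefined. You implicitly concede the correct criterion when you propose to show indeterminacy of the six excluded forms by exhibiting \emph{two} sequences of arguments with different limits; but then the positive cases \eref{eq:def:pern1}--\eref{eq:def:pern5} must be proved against the same standard, namely that \emph{every} admissible choice of representing functions yields the same limit. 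That universal quantifier is what your proof is missing, and restoring it turns each of your two-line appeals to standard limits back into the $\tilde{\varepsilon}$-arguments that constitute the paper's proof.
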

\begin{proof}
	Let $f_{\iffy},f_a,f_b,f_0\colon\R \to \R$,
	$a,b \in \R$ and $b \neq 0$. Without loss of generality we
	assume that $\iffy$ is approached from below and specify
	\begin{subequations}
		\begin{align}
			\lim\limits_{x\uparrow\iffy}{(f_{\iffy}(x))} &= \iffy
				\label{eq:pern-pre-iffy}\\
			\lim\limits_{x\uparrow\iffy}{(f_a(x))} &= a
				\label{eq:pern-pre-a}\\
			\lim\limits_{x\uparrow\iffy}{(f_b(x))} &= b
				\label{eq:pern-pre-b}\\
			\lim\limits_{x\uparrow\iffy}{(f_0(x))} &= 0.
				\label{eq:pern-pre-0}
		\end{align}
	\end{subequations}
	To show well-definedness, we go through each axiom
	given in Definition~\ref{def:pern}.
	\par
	Let $\tilde{\varepsilon} > 0$.
	\begin{enumerate}
		\item[(\ref{eq:def:pern1})]{
			By Definition~\ref{def:inflimits} we know that
			\begin{equation*}
				\lim\limits_{x\uparrow\iffy}{\left(f_{\iffy}(x)\right)} = \iffy
				\quad \Leftrightarrow \quad
				\lim\limits_{x\uparrow\iffy}{\left(-f_{\iffy}(x)\right)} = \iffy
			\end{equation*}
			and, thus, $-(\iffy)=\iffy$ is well-defined.
		}
		\item[(\ref{eq:def:pern2})]{
			To show that $a + \iffy = \iffy + a = \iffy$
			is well-defined we have to show that
			\begin{equation}\label{eq:pern2-goal}
				\lim\limits_{x\uparrow\iffy}{\left(f_a(x) + f_{\iffy}(x)\right)} =
				\lim\limits_{x\uparrow\iffy}{\left(f_{\iffy}(x) + f_a(x)\right)} =
				\iffy.
			\end{equation}
			Following from precondition~(\ref{eq:pern-pre-a}),
			Definition~\ref{def:inflimits} and $\tilde{\varepsilon} > 0$ we know that
			\begin{equation*}
				\exists c_{2,a} \in \R: \forall x > c_{2,a}: |f_a(x) - a| <
				\tilde{\varepsilon}.
			\end{equation*}
			It follows for $x > c_{2,a}$ using the reverse triangle inequality
			that
			\begin{align}
				& \tilde{\varepsilon} > |f_a(x)-a| \ge
					||f_a(x)|-|a|| \ge |f_a(x)|-|a| \notag \\
				\Rightarrow \quad & |f_a(x)| < \tilde{\varepsilon} + |a|
					\label{eq:pern2-1}.
			\end{align}
			Following from precondition~(\ref{eq:pern-pre-iffy}),
			Definition~\ref{def:inflimits} and $2\cdot\tilde{\varepsilon}+|a| > 0$
			we also know that
			\begin{equation}\label{eq:pern2-2}
				\exists c_{2,\iffy} \in \R :\forall x > c_{2,\iffy}:
				|f_{\iffy}(x)| > 2\cdot \tilde{\varepsilon} + |a|.
			\end{equation}
			Let $x > \tilde{c}_2 := \max\{c_{2,a},c_{2,\iffy}\}$ to satisfy
			both (\ref{eq:pern2-1}) and (\ref{eq:pern2-2}).
			It follows using the reverse triangle inequality that
			\begin{align*}			
				& |f_{\iffy}(x)| > 2 \cdot \tilde{\varepsilon} +
					|a| = \tilde{\varepsilon} +
					(\tilde{\varepsilon} + |a|) > \tilde{\varepsilon} + |f_a(x)| \\
				\Rightarrow \quad &
					\tilde{\varepsilon} < |f_{\iffy}(x)| - |f_a(x)| =
					|f_{\iffy}(x)| - |-f_a(x)| \le
					|f_{\iffy}(x) - (-f_a(x)) | \\
				\Rightarrow \quad &
					|f_a(x) + f_{\iffy}(x)| = |f_{\iffy}(x) + f_a(x)| >
					\tilde{\varepsilon},
			\end{align*}
			which by Definition~\ref{def:inflimits} is equivalent to
			(\ref{eq:pern2-goal}) and was to be shown.
		}
		\item[(\ref{eq:def:pern3})]{
			To show that $b \cdot \iffy = \iffy \cdot b = \iffy$
			is well-defined we have to show that
			\begin{equation}\label{eq:pern3-goal}
				\lim\limits_{x\uparrow\iffy}{\left(f_b(x) \cdot f_{\iffy}(x)\right)} =
				\lim\limits_{x\uparrow\iffy}{\left(f_{\iffy}(x) \cdot f_b(x)\right)} =
				\iffy.
			\end{equation}
			Following from precondition~(\ref{eq:pern-pre-b}),
			Definition~\ref{def:inflimits} and $\frac{|b|}{2} > 0$ we know
			\begin{equation*}
				\exists c_{3,b} \in \R: \forall x > c_{3,b}:
				|f_b(x) - b| < \frac{|b|}{2}.
			\end{equation*}
			It follows for $x > c_{3,b}$ using the triangle and reverse triangle
			inequalities that
			\begin{align}
				& |f_b(x)-b| < \frac{|b|}{2} = \frac{|0-b|}{2} \le
					\frac{|0-f_b(x)|+|f_b(x)-b|}{2} \notag\\
				\Rightarrow \quad &
					\frac{|f_b(x)-b|}{2} <
					\frac{|f_b(x)|}{2} \notag\\
				\Leftrightarrow \quad &
					|f_b(x)| > |f_b(x)-b|=|b-f_b(x)| \ge
					||b|-|f_b(x)|| \ge |b|-|f_b(x)| \notag\\
				\Rightarrow \quad &
				|f_b(x)| > \frac{|b|}{2}. \label{eq:pern3-1}
			\end{align}
			Following from precondition~(\ref{eq:pern-pre-iffy}),
			Definition~\ref{def:inflimits} and $\frac{2\cdot\tilde{\varepsilon}}{|b|} > 0$
			we also know that
			\begin{equation}\label{eq:pern3-2}
				\exists c_{3,\iffy} \in \R :\forall x > c_{3,\iffy}:
				|f_{\iffy}(x)| > \frac{2\cdot\tilde{\varepsilon}}{|b|}.
			\end{equation}
			Let $x > \tilde{c}_3 := \max\{c_{3,b},c_{3,\iffy}\}$ to satisfy
			both (\ref{eq:pern3-1}) and (\ref{eq:pern3-2}).
			It follows that
			\begin{align*}
				& |f_{\iffy}(x)| > \frac{2 \cdot \tilde{\varepsilon}}{|b|}
					> \frac{\tilde{\varepsilon}} {|f_b(x)|} \\
				\Rightarrow \quad & |f_b(x)| \cdot |f_{\iffy}(x)| >
					\tilde{\varepsilon} \\
				\Leftrightarrow \quad & |f_b(x) \cdot f_{\iffy}(x)| =
					|f_{\iffy}(x) \cdot f_b(x)| > \tilde{\varepsilon},
			\end{align*}
			which by Definition~\ref{def:inflimits} is equivalent to
			(\ref{eq:pern3-goal}) and was to be shown.
		}
		\item[(\ref{eq:def:pern4})]{
			To show that $a / \iffy = 0$
			is well-defined we have to show that
			\begin{equation}\label{eq:pern4-goal}
				\lim\limits_{x\uparrow\iffy}{\left(
				\frac{f_a(x)}{f_{\iffy}(x)}\right)} = 0.
			\end{equation}
			Following from precondition~(\ref{eq:pern-pre-iffy}),
			Definition~\ref{def:inflimits} and
			$\frac{\tilde{\varepsilon}+|a|}{\tilde{\varepsilon}} > 0$
			we know
			\begin{equation}\label{eq:pern4-2}
				\exists c_{4,\iffy} \in \R :\forall x > c_{4,\iffy}:
				|f_{\iffy}(x)| > \frac{\tilde{\varepsilon}+|a|}
				{\tilde{\varepsilon}}.
			\end{equation}
			Let $x > \tilde{c}_4 := \max\{c_{2,a},c_{4,\iffy}\}$ to satisfy
			both (\ref{eq:pern2-1}) and (\ref{eq:pern4-2}).
			It follows that
			\begin{align*}
				& |f_a(x)| < \tilde{\varepsilon} + |a| =
					\tilde{\varepsilon} \cdot
					\frac{\tilde{\varepsilon}+|a|}
					{\tilde{\varepsilon}} <
					\tilde{\varepsilon} \cdot |f_{\iffy}(x)|\\
				\Rightarrow \quad &
					\frac{|f_a(x)|}{|f_{\iffy}(x)|} <
					\tilde{\varepsilon} \\
				\Leftrightarrow \quad &
					\left|\frac{f_a(x)}{f_{\iffy}(x)} - 0\right|
					< \tilde{\varepsilon},
			\end{align*}
			which by Definition~\ref{def:finlimits} is equivalent to
			(\ref{eq:pern4-goal}) and was to be shown.
		}
		\item[(\ref{eq:def:pern5})]{
			To show that $b / 0 = \iffy$
			is well-defined we have to show that
			\begin{equation}\label{eq:pern5-goal}
				\lim\limits_{x\uparrow\iffy}{\left(
				\frac{f_b(x)}{f_0(x)}\right)} = \iffy.
			\end{equation}
			Following from precondition~(\ref{eq:pern-pre-iffy}),
			Definition~\ref{def:inflimits} and
			$\frac{|b|}{2\cdot\tilde{\varepsilon}} > 0$
			we know
			\begin{equation}\label{eq:pern5-2}
				\exists c_{5,0} \in \R :\forall x > c_{5,0}:
					|f_0(x)| < \frac{|b|}
					{2\cdot\tilde{\varepsilon}}
			\end{equation}
			Let $x > \tilde{c}_5 := \max\{c_{3,b},c_{5,0}\}$ to satisfy
			both (\ref{eq:pern3-1}) and (\ref{eq:pern5-2}).
			It follows that
			\begin{align*}
				& |f_b(x)| > \frac{|b|}{2} = \tilde{\varepsilon}\cdot
					\frac{|b|}{2\cdot \tilde{\varepsilon}} >
					\tilde{\varepsilon} \cdot |f_0(x)| \\
				\Rightarrow \quad & |f_b(x)| >\tilde{\varepsilon} \cdot
					|f_0(x)| \\
				\Leftrightarrow \quad &
					\frac{|f_b(x)|}{|f_0(x)|} > \tilde{\varepsilon} \\
				\Leftrightarrow \quad &
					\left|\frac{f_b(x)}{f_0(x)}\right| >
					\tilde{\varepsilon},
			\end{align*}
			which by Definition~\ref{def:finlimits} is equivalent to
			(\ref{eq:pern5-goal}) and was to be shown.\qedhere
		}
	\end{enumerate}
\end{proof}
\section{Open Intervals}\label{sec:oi}
With well-definedness of $\R^*$ shown we have built a solid foundation for
$\R^*$-interval arithmetic.
Given $\R^*$ is not an ordered set, we have to introduce a new
definition for intervals that seamlessly extend to $\iffy$.
Our goal is to define operations on open intervals and singletons
and to use them to model arbitrary subsets of $\R^*$.
\par
To allow degenerate intervals across $\iffy$, the convention proposed
in \cite[pp.~88-89]{re82} is to give $\R^*$ a counter-clockwise
orientation (see Figure~\ref{fig:Rschema})
and define for $\ul{a},\ol{a}\in\R$ and $\ul{a}<\ol{a}$ the degenerate
interval $(\ol{a},\ul{a})$ by tracing all elements from $\ol{a}$ to $\ul{a}$.
It is in our interest to formalise this intuïtive but informal approach.
To denote degenerate intervals, we first need to define the
\begin{definition}[disjoint union]\label{def:du}
	Let $A$ be a set and $\{ A_i \}_{i\in I}$ a family of sets over an index set
	$I$ with $A_i \subseteq A$. $A$ is the disjoint union of $\{ A_i \}_{i\in I}$,
	denoted by
	\begin{equation*}
		A = \bigsqcup_{i\in I}{A_i},
	\end{equation*}
	if and only if
	\begin{equation}
		\forall i,j\in I : i \neq j: A_i \cap A_j = \emptyset
	\end{equation}
	and
	\begin{equation}
		A = \bigcup_{i\in I}{A_i}.
	\end{equation}
\end{definition}
\begin{definition}[open $\R^*$-interval]\label{def:ori}
	Let $\ul{a},\ol{a} \in \R^*$.
	An open $\R^*$-interval between
	$\ul{a}$ and $\ol{a}$ is defined as
	\begin{equation*}
		\R^* \supset (\ul{a},\ol{a}) :=
		\begin{cases}
			\R & \ul{a} = \ol{a} = \iffy\\
			\left\{ x \in \R \, \big| \, x < \ol{a} \right\} &
				\ul{a} = \iffy \\
			\left\{ x \in \R \, \big| \, x > \ul{a} \right\} &
				\ol{a} = \iffy \\
			\left\{ x \in \R \, \big| \, \ul{a} < x < \ol{a}
				\right\} & \ul{a} \le \ol{a} \\
			(\ol{a},\iffy) \sqcup \{ \iffy \} \sqcup
				(\iffy,\ul{a}) & \ul{a} > \ol{a}
		\end{cases}
	\end{equation*}
\end{definition}
In the interest of defining operations on open $\R^*$-intervals,
we introduce the
\begin{definition}[set of open $\R^*$-intervals]\label{def:soori}
	The set of open $\R^*$-intervals is defined as
	\begin{equation}
		\I := \{ (\ul{a},\ol{a}) \, \big| \, \ul{a},\ol{a}\in\R^* \}.
	\end{equation}
	with the operations $\oplus\colon \I \times \I \to \I$ defined
	as
	\begin{subnumcases}{\left( (\ul{a},\ol{a}), (\ul{b},\ol{b})
	                    \right) \mapsto}		
		\begin{cases}
			\emptyset & \ul{a} \in \R \\
			\emptyset & \ul{b},\ol{b}\in\R \wedge \ul{b} \ge \ol{b} \\
			\R & \text{else}
		\end{cases} & $\ul{a} = \ol{a}$ \label{eq:def:soori:1}
		\\
		(\ul{b},\ol{b})\oplus(\ul{a},\ol{a}) &
			$\ul{b} = \ol{b}$ \label{eq:def:soori:2}
		\\
		(\iffy, \ol{a}+\ol{b}) & $\ul{a}=\ul{b}=\iffy$ \label{eq:def:soori:3}
		\\
		(\ul{a}+\ul{b},\iffy) & $\ol{a}=\ol{b}=\iffy$ \label{eq:def:soori:4}
		\\
		\R & $\ul{a}=\ol{b}=\iffy$ \label{eq:def:soori:5}
		\\
		(\ul{b},\ol{b})\oplus(\ul{a},\ol{a}) &
			$\ol{a}=\ul{b}=\iffy$ \label{eq:def:soori:6}
		\\
		\begin{cases}
			\emptyset & \ul{b}>\ol{b} \\
			(\iffy,\ol{a}+\ol{b}) & \text{else}
		\end{cases} & $\ul{a} = \iffy$ \label{eq:def:soori:7}
		\\
		\begin{cases}
			\emptyset & \ul{b}>\ol{b} \\
			(\ul{a}+\ul{b},\iffy) & \text{else}
		\end{cases} & $\ol{a} = \iffy$ \label{eq:def:soori:8}
		\\
		(\ul{b},\ol{b})\oplus(\ul{a},\ol{a}) & $\ul{b} = \iffy$ \label{eq:def:soori:9}
		\\
		(\ul{b},\ol{b})\oplus(\ul{a},\ol{a}) & $\ol{b} = \iffy$ \label{eq:def:soori:10}
		\\
		\begin{cases}
			\emptyset & \ul{a} > \ol{a} \wedge \ul{b} > \ol{b} \\
			(\ul{a}+\ul{b}, \ol{a}+\ol{b}) & else
		\end{cases} & else \label{eq:def:soori:11}
	\end{subnumcases}
	and, using $\ul{A} := \{ \ul{a}\cdot\ul{b},\ul{a}\cdot\ol{b}\}$,
	$\ol{A} := \{ \ol{a}\cdot \ul{b}, \ol{a}\cdot\ol{b} \}$ and
	$A := \ul{A} \cup \ol{A}$ for $\ul{a},\ol{a},
	\ul{b},\ol{b}\in\R$, $\otimes\colon \I \times \I \to \I$
	defined as
	\begin{subnumcases}{\left( (\ul{a},\ol{a}), (\ul{b},\ol{b})
	                 \right) \mapsto}
		\begin{cases}
			\emptyset & \ul{a} \in \R \\
			\emptyset & \ul{b},\ol{b}\in\R \wedge \ul{b} \ge \ol{b} \\
			\R & \text{else}
		\end{cases} & $\ul{a} = \ol{a}$ \label{eq:def:soori:12}
		\\
		(\ul{b},\ol{b}) \otimes (\ul{a},\ol{a}) &
			$\ul{b}=\ol{b}$ \label{eq:def:soori:13}
		\\
		\begin{cases}
			(\ol{a} \cdot \ol{b}, \iffy) &
				\ol{a} \leq 0 \wedge \ol{b} \leq 0 \\
			\R & \text{else}
		\end{cases} & $\ul{a}=\ul{b}=\iffy$ \label{eq:def:soori:14}
		\\
		\begin{cases}
			(\ul{a} \cdot \ul{b}, \iffy) &
				\ul{a} \geq 0 \wedge \ul{b} \geq 0 \\
			\R & \text{else}
		\end{cases} & $\ol{a} = \ol{b} = \iffy$ \label{eq:def:soori:15}
		\\
		\begin{cases}
			(\iffy, \ol{a}\cdot \ul{b}) &
				\ol{a} \leq 0 \wedge \ul{b} \geq 0 \\
			\R & \text{else}
		\end{cases} & $\ul{a} = \ol{b} = \iffy$ \label{eq:def:soori:16}
		\\
		(\ul{b},\ol{b}) \otimes (\ul{a},\ol{a}) &
			$\ol{a} = \ul{b} = \iffy$ \label{eq:def:soori:17}
		\\
		\begin{cases}
			\R & \ul{b}>\ol{b} \\
			(\iffy, \max(\ol{A}))
				& \ul{b} \ge 0 \\
			(\min(\ol{A}),\iffy)
				& \ol{b} \le 0 \\
			\R & \text{else}
		\end{cases} & $\ul{a} = \iffy$ \label{eq:def:soori:18}
		\\
		\begin{cases}
			\R & \ul{b}>\ol{b} \\
			(\min(\ul{A}),\iffy)
				& \ul{b} \ge 0 \\
			(\iffy, \max(\ul{A}))
				& \ol{b} \le 0 \\
			\R & \text{else}
		\end{cases} & $\ol{a} = \iffy$ \label{eq:def:soori:19}
		\\
		(\ul{b},\ol{b}) \otimes (\ul{a},\ol{a}) &
			$\ul{b} = \iffy$ \label{eq:def:soori:20}
		\\
		(\ul{b},\ol{b}) \otimes (\ul{a},\ol{a}) &
			$\ol{b} = \iffy$ \label{eq:def:soori:21}
		\\
		\emptyset & $\ul{a}>\ol{a} \wedge \ul{b}>\ol{b}\quad$ \label{eq:def:soori:22}
		\\
		\begin{cases}
			(\max(\ul{A}),\min(\ol{A})) & \sgn(\ul{b})=\sgn(\ol{b}) \\
			\emptyset & \text{else}
		\end{cases} & $\ul{a}>\ol{a}$ \label{eq:def:soori:23}
		\\
		(\ul{b},\ol{b}) \otimes (\ul{a},\ol{a}) & $\ul{b}>\ol{b}$ \label{eq:def:soori:24}
		\\
		\emptyset & $\ul{a}=\ol{a} \lor \ul{b}=\ol{b}\qquad\quad$ \label{eq:def:soori:25}
		\\
		(\min(A),\max(A)) & else \label{eq:def:soori:26}
	\end{subnumcases}	
\end{definition}
\begin{remark}[role of empty set in definition]
	The use of the empty set in
	Definition~\ref{def:soori} denotes cases where
	undefined behaviour occurs.
\end{remark}
\begin{theorem}[well-definedness of $\I$]\label{theo:wdoi}
	$\I$ is well-defined in terms of set theory.
\end{theorem}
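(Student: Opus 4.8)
The plan is to read the phrase ``well-defined in terms of set theory'' as three separate obligations and to discharge them one after another by a systematic case analysis. First, every set-forming expression appearing in Definitions~\ref{def:ori} and~\ref{def:soori} must genuinely describe a subset of $\R^*$. Second, the case tables defining $\oplus$ and $\otimes$ must, when the first applicable line is taken, assign to each pair in $\I\times\I$ exactly one value --- so the tables have to be exhaustive and every recursive line has to terminate. Third, that value must always lie in $\I$, so that $\oplus,\otimes\colon\I\times\I\to\I$ really are maps with codomain $\I$. It is convenient to record at the outset that $\emptyset\in\I$ (take $\ul{a}=\ol{a}\in\R$ in Definition~\ref{def:ori}) and $\R=(\iffy,\iffy)\in\I$, so that the ``exceptional'' outputs $\emptyset$ and $\R$ cause no trouble.

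For the first obligation the only non-routine point is the last line of Definition~\ref{def:ori}: when $\ul{a}>\ol{a}$ one must check that the three pieces $(\ol{a},\iffy)$, $\{\iffy\}$ and $(\iffy,\ul{a})$ are pairwise disjoint, so that the $\sqcup$'s are legitimate in the sense of Definition~\ref{def:du}. Unfolding the earlier lines of the same definition turns these pieces into a half-line of reals, a singleton, and another half-line of reals; neither half-line contains $\iffy$, and the two half-lines are disjoint precisely because of the ordering hypothesis relating $\ul{a}$ and $\ol{a}$. The other four lines of Definition~\ref{def:ori} are visibly subsets of $\R^*$. In Definition~\ref{def:soori} the analogous point is merely that the auxiliary finite sets $\ul{A}$, $\ol{A}$ and $A$ are non-empty, so that $\min$ and $\max$ of them exist; this is immediate.

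The heart of the argument is the second obligation. For $\oplus$ I would first establish exhaustiveness by walking down the guards: a pair failing all of~\eqref{eq:def:soori:1}--\eqref{eq:def:soori:10} has, one negated $\iffy$-guard at a time, all four endpoints $\ul{a},\ol{a},\ul{b},\ol{b}$ in $\R$, which is exactly the configuration handled by the two sub-cases of~\eqref{eq:def:soori:11}. Single-valuedness then reduces to termination of the four recursive lines~\eqref{eq:def:soori:2},~\eqref{eq:def:soori:6},~\eqref{eq:def:soori:9} and~\eqref{eq:def:soori:10}: in each of these the negated guards of the preceding lines pin down the shapes of $(\ul{a},\ol{a})$ and $(\ul{b},\ol{b})$ tightly enough that the swapped pair $\bigl((\ul{b},\ol{b}),(\ul{a},\ol{a})\bigr)$ falls under a strictly earlier, non-recursive line, so the recursion bottoms out after one step. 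The same programme handles $\otimes$: exhaustiveness by bookkeeping which of the guards~\eqref{eq:def:soori:12}--\eqref{eq:def:soori:25} have been ruled out before the catch-all~\eqref{eq:def:soori:26} is reached; termination of the recursive lines~\eqref{eq:def:soori:13},~\eqref{eq:def:soori:17},~\eqref{eq:def:soori:20},~\eqref{eq:def:soori:21} and~\eqref{eq:def:soori:24} by the same one-step argument; and, in addition, the observation that in each line in which $\min(\ul{A})$, $\max(\ol{A})$, $\min(A)$ or $\max(A)$ actually occurs --- namely~\eqref{eq:def:soori:18},~\eqref{eq:def:soori:19},~\eqref{eq:def:soori:23} and~\eqref{eq:def:soori:26} --- the negated $\iffy$-guards have already forced the relevant endpoints to be real, so these extrema are taken over non-empty subsets of $\R$, as the side condition stated before~\eqref{eq:def:soori:12} requires.

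The third obligation is then short: every terminal right-hand side is one of $\emptyset$, $\R$, $(\iffy,c)$ with $c\in\R$, $(c,\iffy)$ with $c\in\R$, or $(c,c')$ with $c,c'\in\R$, and each of these has the form $(\ul{c},\ol{c})$ with $\ul{c},\ol{c}\in\R^*$, hence lies in $\I$. I expect the difficulty to be entirely one of bookkeeping rather than of any single hard idea: the audit of both case tables --- checking exhaustiveness, tracking the accumulated negated guards so that each swap-recursion is seen to resolve on the next line, and confirming that every $\min$ or $\max$ ranges over a non-empty real set --- is long but mechanical. The steps most prone to error are the disjointness check underlying the $\sqcup$-decomposition in Definition~\ref{def:ori} and the termination of the swap cases, since both hinge on correctly reading off the constraints that the ordering of the earlier lines imposes.
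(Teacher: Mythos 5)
There is a genuine gap, and it lies in what you take ``well-defined in terms of set theory'' to mean. The paper's proof states the defining condition explicitly: for all $A,B\in\I$ the operations must satisfy
\begin{equation*}
	A \oplus B = \{ a+b \mid a \in A \wedge b \in B \}
	\qquad\text{and}\qquad
	A \otimes B = \{ a \cdot b \mid a \in A \wedge b \in B \},
\end{equation*}
except where undefined behaviour occurs. The bulk of the paper's argument is therefore a case-by-case verification that each formula in Definition~\ref{def:soori} computes exactly this elementwise (Minkowski) sum or product of the underlying subsets of $\R^*$ --- for instance that $(\iffy,\ol{a})\oplus(\iffy,\ol{b})$, unfolded via Definition~\ref{def:ori}, really equals $\{x\in\R \mid x<\ol{a}+\ol{b}\}=(\iffy,\ol{a}+\ol{b})$, that a half-line plus a reverse half-line gives all of $\R$, and that the degenerate cases decompose correctly under the disjoint-union representation. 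None of your three obligations touches this. Exhaustiveness, single-valuedness, termination of the swap-recursions and closedness under $\I$ are all necessary, but the paper dispatches them in two sentences; they establish only that $\oplus$ and $\otimes$ are functions $\I\times\I\to\I$, not that they are the \emph{right} functions.

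To see that your obligations are strictly weaker, note that if case~\eqref{eq:def:soori:3} had erroneously read $(\iffy,\ol{a}\cdot\ol{b})$ instead of $(\iffy,\ol{a}+\ol{b})$, your audit would pass unchanged: the table would still be exhaustive and single-valued, the output would still lie in $\I$, yet the operation would fail the semantic identity that the theorem is actually asserting. So the heart of the proof --- unfolding each interval into its underlying point set via Definition~\ref{def:ori} and checking the set equality case by case, including the degenerate intervals through $\iffy$ and the $\min$/$\max$ formulas for products --- is missing from your plan and must be added. Your observations about the legitimacy of the $\sqcup$-decomposition and the non-emptiness of $\ul{A}$, $\ol{A}$, $A$ are fine as preliminaries, but they are preliminaries only.
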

\begin{proof}
	One can see that the operations $\oplus$ and
	$\otimes$ satisfy closedness with regard to $\I$.
	Symmetry is also satisfied given the explicit
	transposed forms
	(\ref{eq:def:soori:2}), (\ref{eq:def:soori:6}),
	(\ref{eq:def:soori:9}) and (\ref{eq:def:soori:10})
	for $\oplus$ and
	(\ref{eq:def:soori:13}), (\ref{eq:def:soori:17}),
	(\ref{eq:def:soori:20}), (\ref{eq:def:soori:21})
	and (\ref{eq:def:soori:24}) for $\otimes$.
	\par
	Well-definedness in terms of set theory is based
	on the condition that for given $A,B\in\I$ the two
	operations $\oplus$ and $\otimes$ must satisfy
	\begin{equation*}
		A \oplus B = \left\{ a+b
		\, | \, a \in A \wedge
		b \in B  \right\}
	\end{equation*}
	and
	\begin{equation*}
		A \otimes B = \{ a \cdot b \, | \, a \in A \wedge
		b \in B  \}
	\end{equation*}
	respectively, except for cases where undefined behaviour
	occurs. It follows from the conditions that if either
	$A=\emptyset$ or $B=\emptyset$ the resulting set is also
	empty (see (\ref{eq:def:soori:1}) and
	(\ref{eq:def:soori:12})).
	\par
	Let $a,b\in\I$ and $\ul{a},\ol{a},\ul{b},\ol{b}\in\R$.
	\begin{enumerate}
		\item[(\ref{eq:def:soori:1})]{
			This case either corresponds to
			\begin{equation*}
				\emptyset\oplus b,
			\end{equation*}
			yielding the empy set, or
			\begin{equation*}
				\R \oplus b,
			\end{equation*}
			yielding $\R$, unless $b$
			is degenerate, given it contains $\iffy$
			and $\R^*\notin\I$ is undefined, or empty,
			yielding the empty set.
		}
		\item[(\ref{eq:def:soori:3})]{
			This case corresponds to
			\begin{equation*}
				(\iffy, \ol{a}) \oplus (\iffy,\ol{b})
			\end{equation*}
			and yields, using Definition~\ref{def:ori},
			\begin{equation*}
				\{ x \in \R \, | \, x < \ol{a} \} \oplus
				\{ x \in \R \, | \, x < \ol{b} \} =
				\{ x \in \R \, | \, x < \ol{a} + \ol{b} \} =
				(\iffy, \ol{a} + \ol{b}).
			\end{equation*}
		}
		\item[(\ref{eq:def:soori:4})]{
			This case corresponds to
			\begin{equation*}
				(\ul{a},\iffy) \oplus (\ol{a},\iffy)
			\end{equation*}
			and yields, using Definition~\ref{def:ori},
			\begin{equation*}
				\{ x \in \R \, | \, x > \ul{a} \} \oplus
				\{ x \in \R \, | \, x > \ul{b} \} =
				\{ x \in \R \, | \, x > \ul{a} + \ul{b} \} =
				(\ul{a} + \ul{b}, \iffy).
			\end{equation*}
		}
		\item[(\ref{eq:def:soori:5})]{
			This case corresponds to
			\begin{equation*}
				(\iffy,\ol{a}) \oplus (\ul{b},\iffy)
			\end{equation*}
			and yields, using Definition~\ref{def:ori},
			\begin{equation*}
				\{ x \in \R \, | \, x < \ol{a} \} \oplus
				\{ x \in \R \, | \, x > \ul{b} \} =	\R.
			\end{equation*}
		}
		\item[(\ref{eq:def:soori:7})]{
			This case corresponds to
			\begin{equation*}
				(\iffy,\ol{a}) \oplus (\ul{b},\ol{b})
			\end{equation*}
			and yields, using Definition~\ref{def:ori},
			if $(\ul{b},\ol{b})$ is degenerate
			\begin{equation*}
				\{ x \in \R \, | \, x < \ol{a} \} \oplus
				((\ol{b},\iffy) \sqcup \{ \iffy \} \sqcup
				(\iffy,\ul{b})) = \R^*
			\end{equation*}
			and, thus, the empty set as $\R^*\notin\I$ is
			undefined, or else
			\begin{equation*}
				\{ x \in \R \, | \, x < \ol{a} \} \oplus
				\{ x \in \R \, | \, \ul{b} < x < \ol{b} \} =
				\{ x \in \R \, | \, x < \ol{a} + \ol{b} \} =
				(\iffy, \ol{a} + \ol{b}).
			\end{equation*}
		}
		\item[(\ref{eq:def:soori:8})]{
			This case corresponds to
			\begin{equation*}
				(\ul{a},\iffy) \oplus (\ul{b},\ol{b})
			\end{equation*}
			and yields, using Definition~\ref{def:ori},
			if $(\ul{b},\ol{b})$ is degenerate
			\begin{equation*}
				\{ x \in \R \, | \, x > \ul{a} \} \oplus
				((\ol{b},\iffy) \sqcup \{ \iffy \} \sqcup
				(\iffy,\ul{b})) = \R^*
			\end{equation*}
			and, thus, the empty set as $\R^*\notin\I$ is
			undefined, or else
			\begin{equation*}
				\{ x \in \R \, | \, x > \ul{a} \} \oplus
				\{ x \in \R \, | \, \ul{b} < x < \ol{b} \} =
				\{ x \in \R \, | \, x > \ul{a} + \ul{b} \} =
				(\ul{a} + \ul{b}, \iffy).
			\end{equation*}
		}
		\item[(\ref{eq:def:soori:11})]{
			This case corresponds to
			\begin{equation*}
				(\ul{a},\ol{a}) \oplus (\ul{b},\ol{b})
			\end{equation*}
			and yields, using Definition~\ref{def:ori}, if both
			$(\ul{a},\ol{a})$ and $(\ul{b},\ol{b})$ are
			degenerate
			\begin{equation*}
				((\ol{a},\iffy) \sqcup \{ \iffy \} \sqcup
				(\iffy,\ul{a})) \oplus
				((\ol{b},\iffy) \sqcup \{ \iffy \} \sqcup
				(\iffy,\ul{b}))
			\end{equation*}
			the empty set, as $\iffy+\iffy$ is undefined.
			If, without loss of generality, only
			$(\ul{a},\ol{a})$ is degenerate, it yields
			\begin{equation*}
				((\ul{a},\iffy) \sqcup \{ \iffy \} \sqcup
				(\iffy,\ol{a})) \oplus
				(\ul{b},\ol{b})=
				(\ul{a}+\ul{b},\iffy) \sqcup \{ \iffy \} \sqcup
				(\iffy,\ol{a}+\ol{b})=(\ol{a}+\ol{b},\ol{a}+\ol{b}).
			\end{equation*}
			If neither $(\ul{a},\ol{a})$ nor $(\ul{b},\ol{b})$ are
			degenerate, it yields
			\begin{equation*}
				\{ x \in \R \, | \, \ul{a} < x < \ol{a} \} \oplus
				\{ x \in \R \, | \, \ul{b} < x < \ol{b} \} =
				\{ x \in \R \, | \, \ul{a}+\ul{b} < x < \ol{a}+\ol{b} \} =
				(\ul{a}+\ul{b},\ol{a}+\ol{b}).
			\end{equation*}
		}
	\end{enumerate}
	The cases (\ref{eq:def:soori:12}), (\ref{eq:def:soori:14}),
	(\ref{eq:def:soori:15}), (\ref{eq:def:soori:16}),
	(\ref{eq:def:soori:18}), (\ref{eq:def:soori:19}),
	(\ref{eq:def:soori:22}), (\ref{eq:def:soori:23}),
	(\ref{eq:def:soori:25}) and
	(\ref{eq:def:soori:26}) for $\otimes$ are shown analogously.
\end{proof}
Given the complexity of open interval arithmetic alone, it
becomes clear why open intervals have been studied independently up
to this point.
We will now expand $\I$ with singletons and introduce the concept
of $\R^*$-Flakes.
\section{Flakes}\label{sec:flakes}
To model subsets of $\R^*$, one easily finds that open intervals
alone are not sufficient to model even simple sets. Using
singletons to expand $\I$ can present new possibilities.
Before we introduce the central concept of this chapter, we
first need to formalise the definition of singletons in $\R^*$.
\begin{definition}[set of singletons]\label{def:sos}
	Let $S$ be a set. The set of $S$-singletons is defined as
	\begin{equation*}
		\S(S) := \left\{ \{ x \} : x \in S \right\}.
	\end{equation*}
\end{definition}
Now we proceed to define the expansion of $\I$ with $\R^*$-singletons
as the 
\begin{definition}[set of $\R^*$-Flakes]\label{def:sorf}
	Let $a,b\in\F$. The set of $\R^*$-Flakes is defined as
	\begin{equation*}
		\F := \I \sqcup \S(\R^*).
	\end{equation*}
	To simplify notation, set the correspondences for
	$\ul{a},\ol{a},\tilde{a},\ul{b},\ol{b},\tilde{b}\in\R^*$
	\begin{align*}
		a \in \I &\quad\leftrightarrow\quad a = (\ul{a},\ol{a}) \\
		a \in \S(\R^*) &\quad\leftrightarrow\quad a = \{ \tilde{a} \} \\
		b \in \I &\quad\leftrightarrow\quad b = (\ul{b},\ol{b}) \\
		b \in \S(\R^*) &\quad\leftrightarrow\quad b = \{ \tilde{b} \}
	\end{align*}
	and use them to define the operations $\boxplus\colon \F \times \F \to \F$ defined as
	\begin{subnumcases}{(a,b) \mapsto}
		a \oplus b & $a,b \in \I$ \label{eq:def:sorf:1}\\
		\begin{cases}
			\emptyset & \tilde{a} = \tilde{b} = \iffy \\
			\{ \tilde{a} + \tilde{b} \} & \text{else}
		\end{cases} & $a,b \in \S(\R^*)$ \label{eq:def:sorf:2}\\
		\begin{cases}
			\begin{cases}
				\emptyset & \ul{b} \ge \ol{b} \\
				\{\iffy\} & \text{else}
			\end{cases} & \tilde{a} = \iffy \\
			\emptyset & \ul{b}=\ol{b} \\
			(\tilde{a} + \ul{b}, \tilde{a} + \ol{b})
			& \text{else}
		\end{cases} & $a \in \S(\R^*) \wedge b \in \I$\label{eq:def:sorf:3}\\
		b \boxplus a & $a \in \I \wedge b \in \S(\R^*)$ \label{eq:def:sorf:4}
	\end{subnumcases}
	and, using $A=\{\tilde{a} \cdot \ul{b},\tilde{a}\cdot\ol{b}\}$
	for $\tilde{a},\ul{b},\ol{b}\in\R$,
	$\boxtimes\colon \F \times \F \to \F$
	defined as
	\begin{subnumcases}{(a,b) \mapsto}
		a \otimes b & $a,b\in\I$ \label{eq:def:sorf:5}\\
		\begin{cases}
			\emptyset & \tilde{a} = \iffy \wedge
				\tilde{b} \in \{0,\iffy\} \\
			\emptyset & \tilde{a} \in \{ 0, \iffy \}
				\wedge \tilde{b} = \iffy \\
			\{ \tilde{a} \cdot \tilde{b}\} & \text{else}
		\end{cases} & $a,b \in \S(\R^*)$ \label{eq:def:sorf:6}\\
		\begin{cases}
			\begin{cases}
				\begin{cases}
					\{\iffy\} & \ol{b} < 0 \\
					\emptyset & \text{else}
				\end{cases} & \ul{b} = \iffy \\
				\begin{cases}
					\{\iffy\} & \ul{b} > 0 \\
					\emptyset & \text{else}
				\end{cases} & \ol{b} = \iffy \\
				\emptyset & \ul{b} > \ol{b} \\
				\{\iffy\} & \sgn(\ul{b})=\sgn(\ol{b}) \\
				\emptyset & \text{else}
			\end{cases} & \tilde{a} = \iffy \\
			\begin{cases}
				(\iffy, \tilde{a} \cdot \ol{b}) & \tilde{a} > 0 \\
				(\tilde{a} \cdot \ol{b}, \iffy) & \tilde{a} < 0 \\
				\emptyset & \text{else}
			\end{cases} & \ul{b} = \iffy \\
			\begin{cases}
				(\tilde{a} \cdot \ul{b}, \iffy) & \tilde{a} > 0 \\
				(\iffy, \tilde{a} \cdot \ul{b})  & \tilde{a} < 0 \\
				\emptyset & \text{else}
			\end{cases} & \ol{b} = \iffy \\
			(\max(A), \min(A)) & \ul{b} > \ol{b} \\
			\emptyset & \ul{b} = \ol{b} \\
			(\min(A), \max(A)) & \text{else}
		\end{cases} & $a \in \S(\R^*) \wedge b\in\I$ \label{eq:def:sorf:7}\\
		b \boxtimes a & $a\in\I\wedge b \in \S(\R^*)\qquad\quad$ \label{eq:def:sorf:8}
	\end{subnumcases}
	The inverse element of $a \in \F$ for $\boxplus$ is defined as
	\begin{equation*}
		-a := \begin{cases}
			\{-\tilde{a}\} & a \in \S(\R^*) \\
			\begin{cases}
				\emptyset & a = \emptyset \\
				(-\ol{a},-\ul{a}) & \text{else}
			\end{cases} & a \in \I
		\end{cases}
	\end{equation*}
	and the inverse element of $a \in \F$ for $\boxtimes$ is defined
	as
	\begin{equation*}
		/a := \begin{cases}
			\{\tilde{a}^{-1}\} & a \in \S(\R^*) \\
			\begin{cases}
				\emptyset & a = \emptyset \\
				(\ol{a}^{-1},\ul{a}^{-1}) & \text{else}
			\end{cases} & a \in \I.
		\end{cases}
	\end{equation*}
\end{definition}
While this definition is definitely complex, we can see that going
step by step and first defining operations on open $\R^*$-intervals
alone makes it easier to prove well-definedness of those operations
as a whole. It shall be noted here that $\R^*$-Flakes allow us to model
closed and open sets on $\R^*$ easily.
\begin{theorem}[well-definedness of $\F$]\label{theo:wdoff}
	$\F$ is well-defined in terms of set theory.
\end{theorem}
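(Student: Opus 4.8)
The plan is to mirror the structure of the proof of Theorem~\ref{theo:wdoi}. First I would observe that $\boxplus$ and $\boxtimes$ are closed with respect to $\F$: every branch in Definition~\ref{def:sorf} returns either an element of $\I$ (via $\oplus$, $\otimes$, or an explicitly written open $\R^*$-interval), an element of $\S(\R^*)$, or $\emptyset$, and since $\emptyset = (\ul{a},\ul{a})$ for any $\ul{a}\in\R$ already lies in $\I\subset\F$, closedness holds. Symmetry follows from the explicit transposed branches (\ref{eq:def:sorf:4}) for $\boxplus$ and (\ref{eq:def:sorf:8}) for $\boxtimes$, combined with the symmetry of $\oplus$ and $\otimes$ established in Theorem~\ref{theo:wdoi}, the symmetry of the singleton cases, and the commutativity of $+$ and $\cdot$ on $\R^*$ wherever they are defined (Definition~\ref{def:pern}).

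The substantive task is to verify that for $a,b\in\F$ one has
$a\boxplus b = \{\, x+y \mid x\in a \wedge y\in b \,\}$ and
$a\boxtimes b = \{\, x\cdot y \mid x\in a \wedge y\in b \,\}$,
except in the cases flagged by $\emptyset$ as undefined behaviour. The branches with $a,b\in\I$ (lines (\ref{eq:def:sorf:1}) and (\ref{eq:def:sorf:5})) are immediate from Theorem~\ref{theo:wdoi}. The branches with $a,b\in\S(\R^*)$ (lines (\ref{eq:def:sorf:2}) and (\ref{eq:def:sorf:6})) are read off directly from Definition~\ref{def:pern}: the sum or product of two singletons is a singleton exactly when the corresponding operation on the two representatives is defined, and the combinations excluded in Definition~\ref{def:pern} (namely $\iffy+\iffy$, $0\cdot\iffy$, $\iffy\cdot\iffy$) are precisely the ones mapped to $\emptyset$ here.

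The heart of the argument is the mixed case $a\in\S(\R^*)$, $b\in\I$ (lines (\ref{eq:def:sorf:3}) and (\ref{eq:def:sorf:7})). Here I would unfold $b$ using Definition~\ref{def:ori} and split along whether $b$ is a half-line, a bounded interval, a degenerate interval wrapping $\iffy$, all of $\R$, or empty. For $\boxplus$: adding a constant $\tilde{a}\in\R$ is a translation, so $\tilde{a}\boxplus(\ul{b},\ol{b}) = (\tilde{a}+\ul{b},\tilde{a}+\ol{b})$ transfers verbatim, while adding $\tilde{a}=\iffy$ absorbs every finite point and forces $\{\iffy\}$ unless $b$ is empty, and a degenerate $b$ would produce $\R^*\notin\F$, hence $\emptyset$. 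For $\boxtimes$: multiplication by $\tilde{a}\in\R\setminus\{0\}$ is a dilation, orientation-reversing when $\tilde{a}<0$ (which is why the roles of $\min$/$\max$ and of the two $\iffy$-endpoints interchange); multiplication by $0$ collapses everything to $\{0\}$ unless $\iffy\in b$, where $0\cdot\iffy$ is undefined, hence $\emptyset$; and multiplication by $\tilde{a}=\iffy$ sends every nonzero point to $\iffy$ but is undefined at $0$, so the image is $\{\iffy\}$ exactly when $b$ avoids $0$ and does not wrap, and $\emptyset$ otherwise. The main obstacle is precisely this bookkeeping inside (\ref{eq:def:sorf:7}): for each nested sign-and-$\iffy$ subcase one must check both that no product silently becomes undefined and that the image set never equals $\R^*$ (which is not a Flake), matching each branch against $\{x\cdot y\}$. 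The remaining lines (\ref{eq:def:sorf:4}) and (\ref{eq:def:sorf:8}) follow from the symmetry already noted, and the claims about $-a$ and $/a$ reduce to the orientation-reversing behaviour of negation and reciprocation visible in the circle picture of Figure~\ref{fig:Rschema}.
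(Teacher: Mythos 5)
Your proposal follows essentially the same route as the paper's proof: closedness, symmetry via the transposed branches (\ref{eq:def:sorf:4}) and (\ref{eq:def:sorf:8}) together with Theorem~\ref{theo:wdoi}, reduction of the $\I\times\I$ branches to Theorem~\ref{theo:wdoi}, direct verification of the singleton branches against Definition~\ref{def:pern}, and a case split of the mixed branches along Definition~\ref{def:ori}. The only place you are thinner than the paper is the inverse elements: rather than appealing to the orientation-reversing picture, the paper explicitly computes $a \boxplus -a$ and $a \boxtimes /a$ for singletons and for intervals and checks that the result contains the neutral element ($0$ resp.\ $1$) except where undefined behaviour occurs, which is the concrete verification you would still need to write out.
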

\begin{proof}
	One can see that the operations $\boxplus$ and
	$\boxtimes$ satisfy closedness with regard to $\F$.
	Symmetry is also satisfied given the explicit
	transposed forms
	(\ref{eq:def:sorf:4})
	for $\boxplus$ and
	(\ref{eq:def:sorf:8})
	for $\boxtimes$ and the fact that we have shown in
	Theorem~\ref{theo:wdoi} that $\oplus$ and $\otimes$
	are symmetric.
	\par
	Well-definedness in terms of set theory is based
	on the condition that for given $A,B\in\F$ the two
	operations $\boxplus$ and $\boxtimes$ must satisfy
	\begin{equation*}
		A \boxplus B = \{ a + b \, | \, a \in A \wedge
		b \in B  \}
	\end{equation*}
	and
	\begin{equation*}
		A \boxtimes B = \{ a \cdot b \, | \, a \in A \wedge
		b \in B  \}
	\end{equation*}
	respectively, except for cases where undefined behaviour
	occurs.
	\par
	Let $a,b\in\F$ as in Definition~\ref{def:sorf}.
	\begin{enumerate}
		\item[(\ref{eq:def:sorf:1})]{
			We have shown in Proposition~\ref{theo:wdoi} that
			$\boxplus$ is well-defined in terms of set theory.
		}
		\item[(\ref{eq:def:sorf:2})]{
			This case corresponds to
			\begin{equation*}
				\{ \tilde{a} \} \boxplus \{ \tilde{b} \}
			\end{equation*}
			and yields
			\begin{equation*}
				\{ \tilde{a} + \tilde{b} \}
			\end{equation*}
			unless $\tilde{a} = \tilde{b} = \iffy$, which
			is undefined, where the empty set is returned.
		}
		\item[(\ref{eq:def:sorf:3})]{
			This case corresponds to
			\begin{equation*}
				\{ \tilde{a} \} + (\ul{b},\ol{b})
			\end{equation*}
			and yields $\{ \iffy \}$ if $\tilde{a}=\iffy$
			and $b$ is not degenerate or empty, which
			yields the empty set.
			If $\tilde{a}\in\R$, it yields
			\begin{equation*}
				(\tilde{a}+\ul{b},\tilde{a}+\ol{b}),
			\end{equation*}
			for degenerate and non-degenerate $b$, unless $b$
			is empty, which yields the empty set.
		}
	\end{enumerate}
	The cases (\ref{eq:def:sorf:5}), (\ref{eq:def:sorf:6}) and
	(\ref{eq:def:sorf:7}) for $\boxtimes$ are shown analogously.
	\par
	What remains to be shown is that the inverse elements are well-defined.
	One can see that the inverse elements are all closed under $\F$ and
	map $\emptyset$ to $\emptyset$.
	We now have to show that the operation of an element in $\F$ with its
	respective inverse element results in a set containing the respective
	neutral elements of $\R^*$ except where undefined behaviour occurs.
	\par
	For $\boxplus$ with \enquote{$-$} and $\boxtimes$ and \enquote{$/$} we
	observe for singletons
	\begin{align*}
		\{ \tilde{a} \} \boxplus -\{ \tilde{a} \} &=
			\{ \tilde{a} \} \boxplus \{ -\tilde{a} \} =
			\{ \tilde{a} - \tilde{a} \} = \{ 0 \}
			\ni 0 \\
		\{ \tilde{a} \} \boxtimes /\{ \tilde{a} \} &=
			\{ \tilde{a} \} \boxtimes \{ \tilde{a}^{-1} \} =
			\begin{cases}
				\emptyset & \tilde{a} = \iffy \\
				\{ 1 \} \ni 1 & \text{else}.
			\end{cases}
	\end{align*}
	Analogously, we observe for open $\R^*$-intervals with
	$(\ul{a},\ol{a})\neq\emptyset$
	\begin{align*}
		(\ul{a},\ol{a}) \boxplus -(\ul{a},\ol{a}) &=
			(\ul{a},\ol{a}) \boxplus (-\ol{a},-\ul{a}) =
			\begin{cases}
				\R \ni 0 & \ul{a} = \iffy \lor \ol{a} = \iffy \\
				\emptyset & \ul{a} > \ol{a} \\
				(\ul{a}-\ol{a},\ol{a}-\ul{a}) \ni 0 & \text{else}
			\end{cases} \\
		(\ul{a},\ol{a}) \boxtimes /(\ul{a},\ol{a}) &=
			(\ul{a},\ol{a}) \boxtimes ({\ol{a}}^{-1},{\ul{a}}^{-1}) =
			\begin{cases}
				\R \ni 1 & \ul{a} = \iffy \lor \ol{a} = \iffy \\
				\emptyset & \ul{a} > \ol{a} \\
				(\frac{\ul{a}}{\ol{a}},\frac{\ol{a}}{\ul{a}})\ni 1
					& \text{else}.
			\end{cases}
	\end{align*}
	It follows the well-definedness of the inverse elements.
\end{proof}
Now that we have shown well-definedness of $\F$, we can proceed
with showing some useful properties that allow easier generalisations
on Flakes. One of them is the
\begin{definition}[$\R^*$-Flake evaluation of strictly increasing functions]
	\label{def:rfeosif}
	Let $f\colon\R\to\R$ be strictly increasing. The $\R^*$-Flake
	evaluation of $f$
	\begin{equation*}
		f_{\F} \colon \F \to \F
	\end{equation*}
	is defined with the notation $f(\iffy):=\iffy$ as
	\begin{equation*}
		a \mapsto
		\begin{cases}
			\{ f(\tilde{a}) \} & a = \{ \tilde{a} \} \in \S(\R^*) \\
			(f(\ul{a}),f(\ol{a})) & a = (\ul{a},\ol{a}) \in \I.
		\end{cases}
	\end{equation*}
\end{definition}
\begin{proposition}[well-definedness of $\bullet_{\F}$]
	The $\R^*$-Flake evaluation of strictly increasing functions is well-defined
	in terms of set theory.
\end{proposition}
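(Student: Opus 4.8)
The statement to establish is twofold. First, $f_\F$ as given in Definition~\ref{def:rfeosif} is a genuine map $\F\to\F$: since $\F=\I\sqcup\S(\R^*)$ is a disjoint union, the two clauses of the definition apply to disjoint, exhausting classes of inputs, both outputs $\{f(\tilde a)\}$ and $(f(\ul a),f(\ol a))$ are manifestly elements of $\S(\R^*)$ and $\I$ respectively, and the only ambiguity in naming a Flake, namely $\emptyset=(c,c)$ for every $c\in\R$, is respected because $f(c)=f(c)$ forces $(f(c),f(c))=\emptyset$ too. Second --- and this is the content of ``well-defined in terms of set theory'', in the sense of Theorems~\ref{theo:wdoi} and~\ref{theo:wdoff} --- one must show
\begin{equation*}
	f_\F(a)=\{\,f(x)\mid x\in a\,\}
\end{equation*}
for every $a\in\F$, where $f$ is read on $\R^*$ via the convention $f(\iffy):=\iffy$. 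Here I use that the strictly increasing functions to which this evaluation is applied are in fact order-isomorphisms of $\R$, i.e.\ continuous strictly increasing surjections $\R\to\R$; without surjectivity one would only obtain the enclosure $f_\F(a)\supseteq\{f(x)\mid x\in a\}$, as the input $(\iffy,\iffy)=\R$ with $f=\arctan$ shows.

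The proof is then a case analysis following Definition~\ref{def:ori}. The singleton case $a=\{\tilde a\}$ is immediate for both $\tilde a\in\R$ and $\tilde a=\iffy$, since $\{f(\tilde a)\}=\{f(x)\mid x\in\{\tilde a\}\}$. For $a=(\ul a,\ol a)\in\I$ I would treat the five sub-cases in turn. A bounded interval $(\ul a,\ol a)$ with $\ul a\le\ol a$ in $\R$ maps, by strict monotonicity (for ``$\subseteq$'') and the intermediate value property (for ``$\supseteq$''), onto $\{y\in\R\mid f(\ul a)<y<f(\ol a)\}=(f(\ul a),f(\ol a))$, the empty sub-case $\ul a=\ol a$ being subsumed since then $f(\ul a)=f(\ol a)$. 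The two half-lines $(\iffy,\ol a)=\{x<\ol a\}$ and $(\ul a,\iffy)=\{x>\ul a\}$ map onto $\{y<f(\ol a)\}=(\iffy,f(\ol a))$ and $\{y>f(\ul a)\}=(f(\ul a),\iffy)$, using additionally that $f$ is unbounded below, resp.\ above; and the full line $(\iffy,\iffy)=\R$ maps onto $f(\R)=\R=(\iffy,\iffy)$, which is exactly the point where surjectivity enters. Finally, for a degenerate interval $(\ul a,\ol a)$ with $\ul a>\ol a$ I would unfold it by Definition~\ref{def:ori} into its three-piece disjoint-union form consisting of two half-line intervals and the singleton $\{\iffy\}$, apply the already-settled half-line and singleton cases to each piece, observe that the three images are again pairwise disjoint (the two half-line images are $\{y>f(\ul a)\}$ and $\{y<f(\ol a)\}$, disjoint because $\ul a>\ol a$ gives $f(\ul a)>f(\ol a)$ by strict monotonicity, and neither meets $\{\iffy\}$ as $f$ is real-valued on $\R$), and re-fold the union into the degenerate interval $(f(\ul a),f(\ol a))$ via Definition~\ref{def:ori}.

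The main obstacle is the degenerate ``wrap-around'' case: one must carefully peel off the disjoint-union representation, pair each of the three pieces with the correct endpoint of $f$, and then recognise the recombined set as $(f(\ul a),f(\ol a))$ --- here strict monotonicity is what guarantees the resulting pair has the right (reversed) orientation so that Definition~\ref{def:ori} applies again. A secondary point to be pinned down is the interaction of $f$ with the point $\iffy$: the convention $f(\iffy)=\iffy$ has to be reconciled with the full-line and half-line images, which is precisely why ``strictly increasing'' must be read as ``order-isomorphism of $\R$'' if one wants equality rather than mere enclosure.
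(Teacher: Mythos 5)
Your argument takes the same route as the paper's proof: a case split into singletons, non-degenerate open $\R^*$-intervals and degenerate intervals, with the degenerate case resolved by unfolding the disjoint-union representation of Definition~\ref{def:ori}, mapping the pieces separately and refolding. Where you go beyond the paper is in actually verifying the set-theoretic identity $f_{\F}(a)=\{f(x)\mid x\in a\}$ in both directions. The paper only checks that $\ul{a}<\ol{a}\Leftrightarrow f(\ul{a})<f(\ol{a})$, which yields the inclusion $\{f(x)\mid x\in a\}\subseteq f_{\F}(a)$ but not the reverse one, and it asserts $\lim_{x\uparrow\iffy}(f(x))=\lim_{x\downarrow\iffy}(f(x))=\iffy$ for an arbitrary strictly increasing $f$, which is false in general: for $f=\arctan$ one gets $f_{\F}((\iffy,\iffy))=(f(\iffy),f(\iffy))=\R$ while the actual image of $\R$ is $\left(-\tfrac{\pi}{2},\tfrac{\pi}{2}\right)$, so equality fails. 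Your strengthening of the hypothesis to order-isomorphisms of $\R$ --- continuity for the intermediate-value step on bounded intervals, unboundedness/surjectivity for the half-line and full-line cases --- is exactly what is needed to obtain equality rather than mere enclosure, and your explicit disjointness check when refolding the wrap-around case is a detail the paper omits. In short: correct, same skeleton, but your version closes a genuine gap that the paper's own proof glosses over (and which even affects the paper's named example $\exp$, which is not surjective onto $\R$).
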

\begin{proof}
	Let $f\colon\R\to\R$ be strictly increasing. We can see that
	$f_{\F}$ is closed in $\F$ and maps $\emptyset$ to $\emptyset$.
	For singletons well-definedness follows immediately, as it just
	corresponds to the singleton of the single function evaluation of
	$f$. In this context, $f(\iffy)=\iffy$, treating $\iffy$ as an
	invariant object, is also consistent with the axioms of
	Definition~\ref{def:pern}, as
	\begin{equation*}
		\lim_{x\uparrow\iffy}(f(x)) = \lim_{x\downarrow\iffy}(f(x))
		= \iffy.
	\end{equation*}
	For non-degenerate non-empty open $\R^*$-intervals the bounds
	grow accordingly, as
	\begin{equation*}
		\forall \ul{a},\ol{a}\in\R: \quad \ul{a} < \ol{a}
		\quad \Leftrightarrow \quad f(\ul{a}) < f(\ol{a}).
	\end{equation*}
	This also implies the well-definedness of the degenerate case, as
	for $\ul{a},\ol{a}\in\R$ and $\ul{a} > \ol{a}$ it holds that
	\begin{align*}
		f_{\F}((\ul{a},\ol{a})) &=
			f_{\F}((\ol{a},\iffy)) \sqcup f_{\F}(\{ \iffy \}) \sqcup
			f_{\F}((\iffy,\ul{a})) \\
		&= (f_{\F}(\ol{a}), \iffy) \sqcup \{ \iffy \} \sqcup
			(\iffy, f_{\F}(\ul{a})) \\
		&= (f_{\F}(\ul{a}),f_{\F}(\ol{a})). \qedhere
	\end{align*}
\end{proof}
\begin{definition}[$\R^*$ Flake evaluation of strictly decreasing functions]
	\label{def:rfeosdf}
	Let $f\colon\R\to\R$ be strictly decreasing. The $\R^*$ Flake
	evaluation of $f$
	\begin{equation*}
		f_{\F}\colon\F\to\F
	\end{equation*}
	is defined as
	\begin{equation*}
		a \mapsto -((-f)_{\F}(a)).
	\end{equation*}
\end{definition}
With these results we have shown in general that we can evaluate
strictly monotonic functions on $\R^*$-Flakes, for instance
$\exp$ or $\ln$ confined to $\R^{+}_{\neq 0}$, which will be used later.
We require strictly monotonic functions, as a constant function
$f(x)=c \in \R$, that is monotonic but not strictly monotonic, would yield
\begin{equation*}
	f_\F((1,2)) = (f(1),f(2)) = (c,c) = \emptyset,
\end{equation*}
which is not well-defined in terms of set theory.
\par
Using the results obtained in this Chapter, we can now examine a discrete
set of Unums as a subset of $\F$. This especially allows us to use those now
well-defined operations and identify them on the set of Unums,
provided we choose it properly.
\chapter{Unum Arithmetic}\label{ch:ua}
This Chapter will construct the Unum arithmetic based on the results in
Chapter~\ref{ch:ia} and the publications \cite{gu16-a} and \cite{gu16-b}
by \textsc{Gustafson}.
We start off by examining the
\begin{definition}[set of Unums]\label{def:sou}
	Let
	\begin{equation*}
		P=\{ p_1, \ldots, p_n \, | \, \forall i < j:
		p_i < p_j\} \subset (1,\iffy),
	\end{equation*}
	$p_0 := 1$ and $p_{n+1} := \iffy$.
	The set of Unums on the lattice $P$ is defined as
	\begin{align*}
		\F \supset \U(P) := &
		\bigsqcup_{i=1}^{n}{\big[
			\{ p_i \} \sqcup /\{  p_i \}
			\sqcup
			-\{ p_i \} \sqcup -/\{ p_i \}
		\big]}
		\sqcup \\ &
		\bigsqcup_{i=0}^{n}{\big[
			\{ (p_i, p_{i+1}) \} \sqcup \{ /(p_i ,p_{i+1}) \}
			\sqcup
			\{ -(p_i, p_{i+1})\} \sqcup \{ -/(p_i, p_{i+1}) \}
		\big]} \sqcup \\ &
		\{ 1 \} \sqcup \{ -1 \} \sqcup
		\{ 0 \} \sqcup \{ \iffy \}
	\end{align*}
\end{definition}
\begin{remark}\label{rem:unumclosed}
	By Definition~\ref{def:sou}, $\U$ is closed
	under inversion with regard to $\boxplus$ and $\boxtimes$.
\end{remark}
In regard to $\F$, Remark~\ref{rem:unumclosed} underlines
the fact that this choice for $\U$, generated
by a set of lattice points between $(1,\iffy)$, is in fact
a good one.
We will now proceed to derive some elemental properties of
$\U$ and prepare it to define operations on it.
\begin{proposition}[cardinality of $\U$]\label{prop:cou}
	Let $P$ as in Definition~\ref{def:sou}. The number of Unums is
	\begin{equation*}
		|\U | = 8 \cdot (|P| + 1).
	\end{equation*}
\end{proposition}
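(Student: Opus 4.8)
The plan is to turn the explicit disjoint--union decomposition of $\U(P)$ in Definition~\ref{def:sou} into a counting argument. Writing $n := |P|$, I would first justify that the three pieces of that decomposition are genuinely pairwise disjoint (so that their cardinalities add), then count each piece, and finally sum.

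For the disjointness the essential input is the hypothesis $P \subset (1,\iffy)$: it forces every $p_i$ to be a \emph{finite} real with $p_i > 1$, hence $p_i^{-1} \in (0,1)$. Consequently, for each fixed $i$ the four Flakes $\{p_i\}$, $/\{p_i\} = \{p_i^{-1}\}$, $-\{p_i\} = \{-p_i\}$ and $-/\{p_i\} = \{-p_i^{-1}\}$ are pairwise distinct, and for $i \neq j$ none of the four belonging to index $i$ equals any of the four belonging to index $j$ (the sign separates the positive values from the negative ones, and magnitude $>1$ versus $<1$ together with $p_i \neq p_j$ separates the rest). None of these Flakes equals $\{1\}$, $\{-1\}$, $\{0\}$ or $\{\iffy\}$, again because $p_i \in (1,\iffy)$. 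In the same way, the open $\R^*$-intervals $(p_i,p_{i+1})$ for $i = 0,\ldots,n$ (with the convention $p_0 = 1$, $p_{n+1} = \iffy$) have pairwise distinct endpoints, and passing to $/(p_i,p_{i+1})$, $-(p_i,p_{i+1})$ and $-/(p_i,p_{i+1})$ keeps everything pairwise distinct (by sign, and by whether the endpoints lie in $[1,\iffy]$ or in $[0,1]$). Finally, since $\F = \I \sqcup \S(\R^*)$ by Definition~\ref{def:sorf}, every interval Flake is automatically distinct from every singleton Flake, so the interval piece cannot collide with either singleton piece.

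With disjointness in hand the count is immediate: the first disjoint union $\bigsqcup_{i=1}^{n}$ contributes $4$ Flakes for each of the $n$ indices, i.e.\ $4n$; the second disjoint union $\bigsqcup_{i=0}^{n}$ contributes $4$ Flakes for each of the $n+1$ indices $0,\ldots,n$, i.e.\ $4(n+1)$; and the last line contributes the $4$ Flakes $\{1\}$, $\{-1\}$, $\{0\}$, $\{\iffy\}$. Hence
\[
	|\U| = 4n + 4(n+1) + 4 = 8n + 8 = 8 \cdot (|P| + 1).
\]

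The hard part is really just this disjointness bookkeeping: one must check that $p_i \neq p_i^{-1}$ (so that each index of the first union contributes four, not two, distinct singletons) and that $p_i \notin \{1,-1,0,\iffy\}$ (so that the first union does not overlap the separately listed Flakes $\{1\}$, $\{-1\}$, $\{0\}$, $\{\iffy\}$ --- precisely the points fixed by negation and reciprocation). Both facts drop straight out of $P \subset (1,\iffy)$, and the arithmetic above then closes the proof.
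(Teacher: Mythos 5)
Your proof is correct and follows essentially the same route as the paper: the paper likewise counts $4\cdot|P|$ singleton Flakes, $4\cdot(|P|+1)$ interval Flakes and the $4$ fixed points $\{1\},\{-1\},\{0\},\{\iffy\}$ directly from the disjoint-union decomposition in Definition~\ref{def:sou} and sums to $8\cdot(|P|+1)$. The only difference is that you explicitly verify the pairwise disjointness that the paper takes for granted from the $\sqcup$ notation, which is a harmless (and arguably welcome) addition.
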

\begin{proof}
	Each quadrant of $\R^*$ is filled with $|P|$ lattice points and
	$|P| + 1$ intervals. Added to this are the $4$ fixed points
	$1$, $-1$, $0$, $\iffy$. It follows from Definition~\ref{def:sou}
	of $|\U |$ as a disjoint union of finite sets that
	\begin{equation*}
		|\U | = 4 \cdot |P| + 4 \cdot (|P| + 1) + 4 = 4 \cdot
		(2 \cdot |P| + 2) = 8 \cdot (|P| + 1).\qedhere
	\end{equation*}
\end{proof}
Before we proceed with constructing operations on the set of Unums,
we first have to define the
\begin{definition}[power set]\label{def:ps}
	Let $S$ be a set. The power set of $S$ is defined as
	\begin{equation*}
		\power(S) := \{ s \subseteq S \}.
	\end{equation*}
\end{definition}
To use the results we have derived for $\F$, we need to find a way to
\enquote{blur} $\R^*$-Flakes into sets of Unums. For this purpose, we define the
\begin{definition}[blur operator]\label{def:blur}
	Let $P$ as in Definition~\ref{def:sou}. The blur operator
	\begin{equation*}
		\blur\colon \F \to \power(\U(P))
	\end{equation*}
	is defined as
	\begin{equation*}
		f \mapsto
		\{ u \in \U : f \subseteq u \}.
	\end{equation*}
\end{definition}
We are now able to embed $\R^*$-Flakes into subsets of $\U$, which allows
us to define operations on $\U$ by identifying them with operations
on $\F$ using the $\blur$-operator.
\begin{remark}[dependent sets and dependency problem]
	It is not within the scope of this thesis to elaborate on the
	theory of dependent sets, and there are multiple ways to approach it.
	To give a simple example, evaluating for $A=(-1,1)\in\I$
	\begin{equation*}
		A - A
	\end{equation*}
	is expected to yield $\{ 0 \}$, but using interval arithmetic,
	the expression just decays to
	\begin{equation*}
		(-1,1) - (-1,1) = (-1,1) + (-1,1) = (-2, 2),
	\end{equation*}
	effectively doubling the width of the interval. This is known as
	the \emph{dependency problem}.
	\par
	It is in our interest to find an approach to limit this problem.
	As follows, we will denote two dependent sets $S_1$ and $S_2$ with
	$S_1\sim S_2$,
	and with regard to the example given above, it holds that $A\sim A$.
\end{remark}
To approach the dependency problem, we only evaluate pairwise
operations for dependent sets. The underlying idea is that if a given
value is present in the first set within a Unum, the dependency
guarantees it will also only be within this Unum in the second set.
We identify operations on $\F$ with operations on $\U$ by defining
the
\begin{definition}[dual Unum operation]\label{def:duo}
	Let $\star\colon\F\times\F\to\F$ be an operation on $\F$
	and $P$ as in Definition~\ref{def:sou}.
	The dual Unum operation
	\begin{equation*}
		\langle \star \rangle \colon \power(\U(P)) \times
		\power(\U(P)) \to \power(\U(P))
	\end{equation*}		
	is defined as
	\begin{equation*}
		(U,V) \mapsto
		\bigcup_{u\in U}\bigcup_{v\in V}
		\begin{cases}
			\emptyset        & U \sim V \wedge u \neq v \\
			\R^*             & u \star v = \emptyset \\
			\blur(u \star v) & \text{else}.
		\end{cases}
	\end{equation*}
\end{definition}
\begin{remark}[$\nan$ for Unum operations]
	As one can see in Definition~\ref{def:duo}, when an $\R^*$-Flake
	operation $\star$ yields the empty set, indicating an empty set or
	that undefined behaviour was witnessed, the Unum arithmetic
	proposed by \textsc{Gustafson} in \cite[Table~2]{gu16-b}
	mandates that the respective dual Unum operation yields $\R^*$.
	\par
	This is not the ideal behaviour, as we carefully defined
	$\boxplus$ and $\boxtimes$ to give the empty set if one operand
	is the empty set, $-\emptyset=\emptyset$ and
	$/\emptyset=\emptyset$.
	This behaviour is useful, as just like $\nan$ for floating-point
	numbers, which, once it occurs, is carried through the entire
	stream of floating-point calculations, the empty set plays this
	special role in the Unum context.
	\par
	In the interest of staying compatible with the Unum format proposed by
	\textsc{Gustafson}, this weak spot in the proposal was implemented
	in the Unum toolbox anyway.
\end{remark}
\begin{definition}(Unum evaluation of strictly increasing functions)
	\label{def:ueosif}
	Let $f\colon\R\to\R$ be strictly increasing. The Unum evaluation
	of $f$
	\begin{equation*}
		\langle f_{\F} \rangle \colon \power(\U(P)) \to \power(\U(P))
	\end{equation*}
	is defined as
	\begin{equation*}
		U \mapsto \bigcup_{u\in U} \blur(f_{\F}(u)).
	\end{equation*}
\end{definition}
\begin{definition}[Unum evaluation of strictly decreasing functions]
	\label{def:ueosdf}
	Let $f\colon\R\to\R$ be strictly decreasing. The Unum evaluation of $f$
	\begin{equation*}
		\langle f_{\F} \rangle \colon \power(\U(P)) \to \power(\U(P))
	\end{equation*}
	is defined as
	\begin{equation*}
		U \mapsto \bigcup_{u\in U} \blur(-((-f)_{\F}(u))).
	\end{equation*}
\end{definition}
\section{Lattice Selection}\label{sec:lattice}
Until now, we have worked with arbitrary $P$. This set of lattice points
is the only parametrisation for $\U$, so we want to investigate what the
ideal construction of $P$ is.
\subsection{Linear Lattice}
The simplest approach is a linear distribution of $p$ lattice points
up to a maximum value $m\in(1,\iffy)$.
\begin{definition}[linear Unum lattice]\label{def:lul}
	Let $p\in\N$ and $m\in(1,\iffy)$. The linear Unum lattice with
	$p$ lattice points and maximum $m$ is defined as
	\begin{equation*}
		P_L(p, m) := \left\{ p_i := 1 + i \cdot \frac{m-1}{p}
		\, \bigg| \, i \in \{1,\ldots,p\} \right\}.
	\end{equation*}
\end{definition}
\begin{proposition}[well-definedness of the linear Unum lattice]
	Let $p\in\N$ and $m\in(1,\iffy)$. $P_L(p,m)$ is well-defined in terms
	of Definition~\ref{def:sou}.
\end{proposition}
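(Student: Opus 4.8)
The plan is to verify that $P_L(p,m)$ meets exactly the two structural requirements that Definition~\ref{def:sou} imposes on a lattice $P$: that its points are pairwise distinct and listed in strictly increasing order, and that every point lies in the open $\R^*$-interval $(1,\iffy)$. Since $P_L(p,m)$ is given by the explicit formula $p_i = 1 + i\cdot\frac{m-1}{p}$ indexed over $\{1,\ldots,p\}$, both requirements reduce to elementary estimates.

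First I would unpack the ambient hypotheses. By Definition~\ref{def:ori}, $m\in(1,\iffy)$ means $m\in\R$ with $m>1$, so $m-1>0$; together with $p\in\N$, hence $p\ge 1>0$, this makes $\frac{m-1}{p}$ a well-defined strictly positive real. Consequently each $p_i$ is a finite real number, which already gives $p_i<\iffy$, and since $i\ge 1$ we get $p_i = 1 + i\cdot\frac{m-1}{p} > 1$; thus $p_i\in(1,\iffy)$ for every $i\in\{1,\ldots,p\}$. Next, for indices $i<j$ in $\{1,\ldots,p\}$ the difference $p_j-p_i = (j-i)\cdot\frac{m-1}{p}$ is a positive multiple of a positive number, so $p_i<p_j$. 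In particular the $p_i$ are pairwise distinct, so $|P_L(p,m)|=p$ and the enumeration $p_1,\ldots,p_p$ is consistent with the ordering convention of Definition~\ref{def:sou} (with $n=p$).

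I would also remark, to justify the terminology ``maximum $m$'', that the largest lattice point is $p_p = 1 + p\cdot\frac{m-1}{p} = m$, which lies in $(1,\iffy)$ and hence strictly below the auxiliary endpoint $p_{n+1}:=\iffy$ of Definition~\ref{def:sou}. Combining these observations shows that $P_L(p,m)$ is a legitimate instance of the set $P$ from Definition~\ref{def:sou}, which is the assertion.

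There is no genuine obstacle here; the only points requiring care are parsing the ambient conditions correctly — reading $m\in(1,\iffy)$ through Definition~\ref{def:ori} as ``$m$ real and $m>1$'' (so in particular $m\ne\iffy$ and $\frac{m-1}{p}$ is a finite positive real), and using the paper's convention that $\N=\{1,2,\ldots\}$, so that $p\ge 1$ and the index set $\{1,\ldots,p\}$ is nonempty. Everything else is a one-line inequality.
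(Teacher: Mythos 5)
Your proposal is correct and follows essentially the same route as the paper: positivity of $\frac{m-1}{p}$ gives strict monotonicity via the difference $(j-i)\cdot\frac{m-1}{p}>0$, and membership in $(1,\iffy)$ follows from $i\ge 1$ and finiteness of each $p_i$. The only cosmetic difference is that you bound every $p_i$ directly, whereas the paper checks only the boundary points $p_1$ and $p_p$ and appeals to monotonicity for the interior points; both are fine.
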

\begin{proof}
	The desired properties $|P_L(p,m)|=p$ and $\max(P_L(p,m))=m$ follow
	from Definition~\ref{def:lul}. We show that
	\begin{equation*}
		\forall i > j: p_i > p_j.
	\end{equation*}
	This is given because $m - 1 > 0$ and
	\begin{equation*}
		p_i - p_j = (i - j) \cdot \frac{m-1}{p} > 0.
	\end{equation*}
	The proof is finished by showing that $p_i \in (1,\iffy)$. It suffices
	to prove that $p_1,p_p\in(1,\iffy)$, as $\forall i > j: p_i > p_j$
	and the boundary points dictate the behaviour of the interior
	points.
	\begin{align*}
		p_1 & = 1 + \frac{m-1}{p} \in (1,\iffy) \\
		p_p & = 1 + m - 1 = m \in (1,\iffy) \qedhere
	\end{align*}
\end{proof}
The problem with a linear Unum lattice is the lack of dynamic range.
Just like with floating-point numbers, we want a dense distribution
of lattice points around $1$ and a lighter distribution the further we
move away from $1$. As we can deduce from this observation, a desired
quality of the Unum lattice could be, for instance, an exponential
distribution.
\subsection{Exponential Lattice}
\begin{definition}[exponential Unum lattice]\label{def:eul}
	Let $p\in\N$ and $m\in(1,\iffy)$. The exponential Unum lattice with
	$p$ lattice points and maximum $m$ is defined as
	\begin{equation*}
		P_E(p, m) := \left\{ p_i :=
		\exp\left(i\cdot\frac{\ln(m)}{p} \right)
		\, \bigg| \, i \in \{1,\ldots,p\} \right\}.
	\end{equation*}
\end{definition}
\begin{proposition}[well-definedness of the exponential Unum lattice]
	Let $p\in\N$ and $m\in(1,\iffy)$. $P_E(p,m)$ is well-defined in terms
	of Definition~\ref{def:sou}.
\end{proposition}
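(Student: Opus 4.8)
The plan is to verify, in order, the four conditions imposed on a lattice by Definition~\ref{def:sou}: that $P_E(p,m)$ has exactly $p$ elements, that its elements are strictly increasing in the index, that they all lie in $(1,\iffy)$, and — although it is really a consequence of the first two — that $\max(P_E(p,m)) = m$. The whole argument rests on the single observation that $\exp$ is strictly increasing on $\R$ with $\exp$ of a positive number being $>1$, together with $\ln(m) > 0$ since $m \in (1,\iffy)$ means $m$ is a finite real with $m > 1$.

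First I would establish strict monotonicity in the index. For $i > j$ we have $i \cdot \frac{\ln(m)}{p} > j \cdot \frac{\ln(m)}{p}$ because $\frac{\ln(m)}{p} > 0$, and applying the strictly increasing function $\exp$ gives $p_i > p_j$. This immediately yields injectivity of $i \mapsto p_i$ on $\{1,\ldots,p\}$, hence $|P_E(p,m)| = p$, so the set is genuinely parametrised by $p$ points as required.

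Next I would check the containment $p_i \in (1,\iffy)$. As in the linear case it suffices to check the two boundary indices, since strict monotonicity then forces every interior $p_i$ to lie between $p_1$ and $p_p$. For the lower end, $p_1 = \exp\!\left(\frac{\ln(m)}{p}\right)$; the argument $\frac{\ln(m)}{p}$ is a positive real, so $p_1 > \exp(0) = 1$ and $p_1$ is finite, i.e. $p_1 \in (1,\iffy)$. For the upper end, $p_p = \exp\!\left(p \cdot \frac{\ln(m)}{p}\right) = \exp(\ln(m)) = m \in (1,\iffy)$ by hypothesis. Combined with monotonicity this simultaneously gives $\max(P_E(p,m)) = p_p = m$, completing the verification.

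I do not expect a real obstacle here; the only point requiring any care is to note explicitly that $m \in (1,\iffy)$ is being used in two distinct ways — $m > 1$ to get $\ln(m) > 0$, and $m \neq \iffy$ (finiteness) so that the composition $\exp(\ln(m)) = m$ is legitimate and $p_p$ lands in $(1,\iffy)$ rather than degenerating at the infinity point. Everything else is a routine consequence of the monotonicity of $\exp$.
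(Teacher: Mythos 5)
Your proposal is correct and follows essentially the same route as the paper's proof: both rest on $\ln(m)>0$ for $m\in(1,\iffy)$ and the strict monotonicity of $\exp$ to get $p_i>p_j$ for $i>j$, and both conclude $p_i>\exp(0)=1$ and $\max(P_E(p,m))=p_p=m$. The only (harmless) differences are stylistic: the paper verifies $p_i>1$ directly for every $i$ rather than only at the boundary indices, and it takes $|P_E(p,m)|=p$ as immediate from the definition where you derive it from injectivity.
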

\begin{proof}
	The desired properties $|P_E(p,m)|=p$ and $\max(P_E(p,m))=m$ follow
	from Definition~\ref{def:eul}. We show that
	\begin{equation*}
		\forall i > j: p_i > p_j.
	\end{equation*}
	This is given because $\exp$ is strictly monotonically increasing and
	\begin{equation*}
		p_i-p_j = \exp\left(i\cdot\frac{\ln(m)}{p} \right)-
			\exp\left(j\cdot\frac{\ln(m)}{p} \right) > 0.
	\end{equation*}
	The proof is finished by showing that $p_i \in (1,\iffy)$.
	\begin{equation*}
		p_i = \exp\left(i\cdot\frac{\ln(m)}{p} \right) > \exp(0) = 1
		\qedhere
	\end{equation*}
\end{proof}
The problem of an exponential Unum lattice is that the lattice
points may have an ideal distribution, but fall onto rather
inaccessible points. For such a number system to work, it has to
contain a decent amount of integers, which is not the case here.
\subsection{Decade Lattice}
A different approach is to specify the number of desired significant
decimal digits of each lattice point and fill the set by scaling
with multiples of $10$.
For example, specifying 1 significant digit yields
\begin{equation*}
	P = \{ 2, 3,\ldots, 9, 10, 20, 30, \ldots, 90, 100, 200, 300,
	\ldots \}.
\end{equation*}
We define this formally, using the remainder of
the \textsc{Euclid}ean division of a by b, denoted by $a \bmod b$
for $a\in\N_0$ and $b\in\N$, as the
\begin{definition}[decade Unum lattice]\label{def:dul}
	Let $p\in\N_0$ and $s\in\N$.
	The decade Unum lattice with $p$ lattice points and
	$s$ significant digits is defined as
	\begin{align*}
		P_D(p,s) := \bigg\{
			p_i :=
			\left[ 1 + 10^{-(s-1)} \cdot \left(i \bmod
			(10^s-10^{s-1})\right) \right] \cdot
			10^{\left\lfloor \frac{i}{10^s-10^{s-1}} \right\rfloor}
			\, \bigg|
		\, i \in \{ 1,\ldots,p \}
		\bigg\}.
	\end{align*}
\end{definition}
\begin{proposition}[well-definedness of the decade Unum lattice]
	\label{prop:wdofdul}
	Let $p\in\N_0$ and $s\in\N$. $P_D(p,s)$ is well-defined in terms
	of Definition~\ref{def:sou}.
\end{proposition}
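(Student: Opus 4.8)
The plan is to check the two requirements that Definition~\ref{def:sou} places on a lattice: that the $p$ generating points are strictly increasing in their index (which, as in the linear and exponential cases, forces them to be pairwise distinct, so that $|P_D(p,s)| = p$), and that every one of them lies in $(1,\iffy)$.

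First I would introduce the abbreviation $N := 10^s - 10^{s-1} = 9\cdot 10^{s-1}$ for the number of lattice points per decade, note that $N \ge 9$, and record each index through Euclidean division as $i = q_i N + r_i$ with $q_i := \lfloor i/N\rfloor \in \N_0$ and $r_i := i \bmod N \in \{0,\ldots,N-1\}$. Then $p_i = \bigl(1 + 10^{-(s-1)}\, r_i\bigr)\cdot 10^{q_i}$, which is a product of two finite positive reals, so $p_i \in \R$ and in particular $p_i \neq \iffy$. For the lower bound $p_i > 1$: if $q_i \ge 1$ then already $p_i \ge 10^{q_i} \ge 10$; and if $q_i = 0$ then $i < N$, hence $r_i = i \ge 1$ and $p_i = 1 + 10^{-(s-1)} i > 1$. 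Thus $P_D(p,s) \subset (1,\iffy)$.

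Next I would establish $\forall i < j : p_i < p_j$, for which by transitivity it suffices to show $p_i < p_{i+1}$ for every $i \in \{1,\ldots,p-1\}$, splitting into two exhaustive cases according to the value of $r_i$. If $r_i < N-1$, then $q_{i+1} = q_i$ and $r_{i+1} = r_i + 1$, so $p_{i+1} - p_i = 10^{-(s-1)}\cdot 10^{q_i} > 0$. If $r_i = N-1$, then $i+1$ is a multiple of $N$, so $q_{i+1} = q_i + 1$ and $r_{i+1} = 0$; using $1 + 10^{-(s-1)}(N-1) = 10 - 10^{-(s-1)}$ we get $p_i = \bigl(10 - 10^{-(s-1)}\bigr)\cdot 10^{q_i} < 10^{q_i+1} = p_{i+1}$. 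Strict monotonicity then gives $|P_D(p,s)| = p$, and together with the previous paragraph every requirement of Definition~\ref{def:sou} is met; the degenerate value $p = 0$ (where $P_D(p,s) = \emptyset$) is vacuous.

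The argument is essentially bookkeeping with the Euclidean division; the only delicate point — and what I expect to be the main obstacle — is the decade-boundary case of the monotonicity step, where one must confirm that the largest mantissa $10 - 10^{-(s-1)}$ occurring within a decade is genuinely below the value $10$ that opens the next decade, i.e.\ that $10^{-(s-1)} > 0$, so that stepping to a new decade never undercuts the previous lattice point.
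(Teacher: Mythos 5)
Your proof is correct and follows essentially the same route as the paper: strict monotonicity of consecutive lattice points via Euclidean-division bookkeeping at the decade boundary, followed by the membership check $p_i\in(1,\iffy)$. Your explicit two-case split (and your separate treatment of $q_i=0$ versus $q_i\ge 1$ for the bound $p_i>1$) is in fact slightly more careful than the paper's single worst-case inequality chain, whose final step $p_i \ge 1 + 10^{-(s-1)}\cdot(i \bmod (10^s-10^{s-1})) > 1$ silently fails for indices with $i \bmod (10^s-10^{s-1}) = 0$, a case you cover via $p_i \ge 10^{q_i} \ge 10$.
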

\begin{proof}
	The desired property $|P_E(p,m)|=p$ follows
	from Definition~\ref{def:dul}. We show that
	\begin{equation*}
		\forall i,j\in\{1,\ldots,p\}: i > j: p_i > p_j.
	\end{equation*}
	This is trivial for $p=1$. For $p>1$ and $i\in\{1,\ldots,p-1\}$ we note that
	for $m\in\N$ it holds that
	\begin{align*}
		(i+1) \bmod m = 0 & \Rightarrow
		\left\{\begin{array}{l}
			i \bmod m = m-1 \\
			\exists n \in \N_0: (i + 1) = n \cdot m \\
		\end{array}\right\} \\
		& \Rightarrow
		\left\{\begin{array}{l}
			\left\lfloor \frac{i+1}{m} \right\rfloor =
				\lfloor n \rfloor = n \\[2mm]
			\left\lfloor \frac{i}{m}  \right\rfloor = n - 1
		\end{array}\right\} \\
		& \Rightarrow
		\left\lfloor \frac{i+1}{m} \right\rfloor =
			\left\lfloor \frac{i}{m} \right\rfloor + 1
	\end{align*}
	and obtain
	\begin{align*}
		p_{i+1} - p_i =~& \left[ 1 + 10^{-(s-1)} \cdot \left((i+1) \bmod
			\left(10^s-10^{s-1}\right)\right) \right] \cdot
			10^{\left\lfloor \frac{i+1}{10^s-10^{s-1}} \right\rfloor} - \\
		& \left[ 1 + 10^{-(s-1)} \cdot \left(i \bmod
			\left(10^s-10^{s-1}\right)\right) \right] \cdot
			10^{\left\lfloor \frac{i}{10^s-10^{s-1}} \right\rfloor} \\
		\ge~& \left[ 1 + 10^{-(s-1)} \cdot 0 \right] \cdot
			10^{{\left\lfloor \frac{i}{10^s-10^{s-1}} \right\rfloor}+1} - \\
		& \left[ 1 + 10^{-(s-1)} \cdot \left( 10^s-10^{s-1}-1\right)\right]
			\cdot 10^{\left\lfloor \frac{i}{10^s-10^{s-1}} \right\rfloor} \\
		=~& \left[ 10 - 1 - 10^{-(s-1)+s} + 10^{-(s-1)+s-1} + 10^{-(s-1)} \right]
			\cdot 10^{\left\lfloor \frac{i}{10^s-10^{s-1}} \right\rfloor} \\
		=~& 10^{-(s-1)}\cdot 10^{\left\lfloor \frac{i}{10^s-10^{s-1}} \right\rfloor} \\
		>~& 0.
	\end{align*}
	The proof is finished by showing that $p_i \in (1,\iffy)$.
	\begin{align*}
		p_i & = \left[ 1 + 10^{-(s-1)} \cdot \left(i \bmod
			(10^s-10^{s-1})\right) \right] \cdot
			10^{\left\lfloor \frac{i}{10^s-10^{s-1}}
			\right\rfloor} \\
		& \ge 1 + 10^{-(s-1)} \cdot \left(i \bmod
			(10^s-10^{s-1})\right) \\
		& > 1 \qedhere
	\end{align*}
\end{proof}
\begin{proposition}[maximum of the decade Unum lattice]
	Let $p\in\N_0$ and $s\in\N$. The maximum of the decade Unum lattice is
	\begin{equation*}
		\max\left\{ P_D(p,s) \right\} =
			\left( 1 +
			10^{-(s-1)}\cdot \left[
			p \bmod \left(10^s-10^{s-1}\right)
			\right]\right) \cdot
			10^{\left\lfloor \frac{p}{10^s-10^{s-1}} \right\rfloor}.
	\end{equation*}
\end{proposition}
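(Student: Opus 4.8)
The statement is an immediate corollary of the monotonicity established in the proof of Proposition~\ref{prop:wdofdul}. There it was shown that the indexed family ${(p_i)}_{i\in\{1,\ldots,p\}}$ defining $P_D(p,s)$ satisfies $p_i > p_j$ whenever $i > j$, i.e.\ it is strictly increasing in the index $i$. Since $P_D(p,s)$ is (for $p\ge 1$) a finite totally ordered set whose elements are exactly $p_1 < p_2 < \cdots < p_p$, its maximum is the last element $p_p$. The plan is therefore simply to invoke this monotonicity and then substitute $i = p$ into the defining expression of Definition~\ref{def:dul}.

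\textbf{Steps.} First, I would recall the strict monotonicity $\forall i,j\in\{1,\ldots,p\}: i>j \Rightarrow p_i > p_j$ from Proposition~\ref{prop:wdofdul}. Second, I would conclude that $\max\{P_D(p,s)\} = \max_{i\in\{1,\ldots,p\}} p_i = p_p$, the extremum of a strictly increasing finite sequence being its final term. Third, I would read off $p_p$ directly from Definition~\ref{def:dul} by setting $i = p$, which yields
\begin{equation*}
	p_p = \left( 1 + 10^{-(s-1)}\cdot\left[ p \bmod \left(10^s-10^{s-1}\right) \right]\right)\cdot
	10^{\left\lfloor \frac{p}{10^s-10^{s-1}} \right\rfloor},
\end{equation*}
which is exactly the claimed formula.

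\textbf{Main obstacle.} There is essentially no obstacle: the content of the statement was already done in proving well-definedness of the lattice, and only bookkeeping remains. The one point deserving a word of care is the degenerate case $p = 0$, where $P_D(0,s) = \emptyset$ and the maximum is not defined; I would restrict to $p \ge 1$ (or note that the formula is to be read under this implicit assumption, consistent with Definition~\ref{def:sou} which takes $P = \{p_1,\ldots,p_n\}$ nonempty in the intended use). Beyond that, the proof is a two-line consequence of earlier results.
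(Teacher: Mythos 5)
Your proposal matches the paper's own proof exactly: it cites the strict monotonicity $\forall i>j: p_i>p_j$ established in the proof of Proposition~\ref{prop:wdofdul} and reads off $p_p$ from Definition~\ref{def:dul}. Your extra remark about the degenerate case $p=0$ is a valid observation the paper glosses over, but otherwise the argument is the same.
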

\begin{proof}
	As shown in the proof of Proposition~\ref{prop:wdofdul},
	$\forall i > j: p_i > p_j$ and thus
	\begin{equation*}
		\max\left\{ P_D(p,s) \right\} = p_p =
		\left( 1 +
		10^{-(s-1)}\cdot \left[
		p \bmod \left(10^s-10^{s-1}\right)
		\right]\right) \cdot
		10^{\left\lfloor \frac{p}{10^s-10^{s-1}} \right\rfloor}.\qedhere
	\end{equation*}
\end{proof}
Comparing the resulting distribution to an exponential curve
fitted to the boundary-points, as shown in Figure~\ref{fig:decade},
one can see that a nearly exponential distribution has been achieved.
As we can see, the decade Unum lattice is a good compromise between a
linear and an exponential Unum lattice.
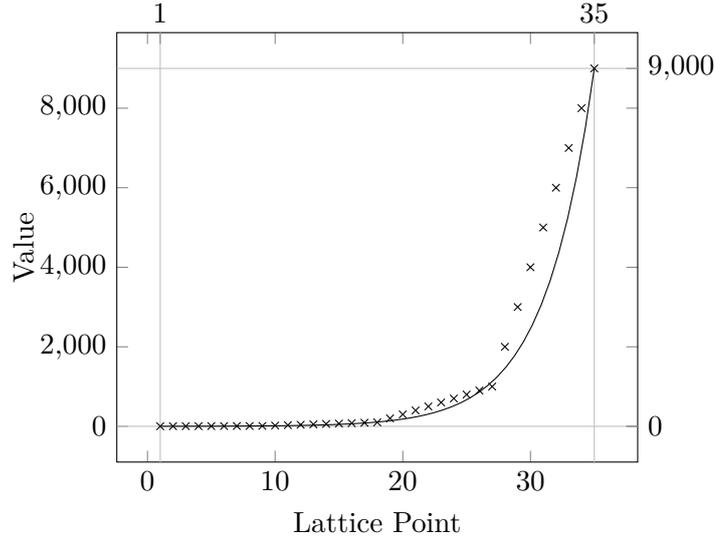
\begin{figure}[htpb]
	\centering
	\begin{tikzpicture}
		\begin{axis}[xlabel=Lattice Point, ylabel=Value,
			         extra x ticks={1,35},
			         extra x tick style={xticklabel pos=right,
			         	xtick pos=right},
			         extra y ticks={0,9000},
			         extra y tick style={yticklabel pos=right,
			         	ytick pos=right},
			         extra tick style={grid=major}]
			\addplot[mark=x, only marks] coordinates {
				(   1,   2) (   2,   3) (   3,   4) (   4,   5)
				(   5,   6) (   6,   7) (   7,   8) (   8,   9)
				(   9,  10) (  10,  20) (  11,  30) (  12,  40)
				(  13,  50) (  14,  60) (  15,  70) (  16,  80)
				(  17,  90) (  18, 100) (  19, 200) (  20, 300)
				(  21, 400) (  22, 500) (  23, 600) (  24, 700)
				(  25, 800) (  26, 900) (  27,1000) (  28,2000)
				(  29,3000) (  30,4000) (  31,5000) (  32,6000)
				(  33,7000) (  34,8000) (  35,9000)
			};
			\addplot[domain=1:35, samples=50] {exp(0.26014*x)};
		\end{axis}
	\end{tikzpicture}
	\caption{$P_D(35,1)$ (demarked by crosses) in comparison with an
		exponential curve fitted to the endpoints $(0,0)$ and
		$(35,\max(P_D(35,1))$.}
	\label{fig:decade}
\end{figure}
\section{Machine Implementation}
The goal of a machine implementation for Unums is to find a model
for $\power(\U(P))$ on a specially chosen lattice $P$. This
means the ability to model subsets of $\R^*$ using multiple Unums,
including degenerate intervals.
\subsection{Unum Enumeration}
We start off with the definition of the
\begin{definition}[ascension operator]\label{def:asc}
	Let $(S,<)$ be a finite strictly ordered set.
	The ascension operator
	\begin{equation*}
		\asc\colon S\times\{1,\ldots,|S|\}\to S
	\end{equation*}
	is defined for
	\begin{equation*}
		s_i\in\{s_i \, | \, i\in\{1,\ldots,|S|\} \wedge
			s_1 < \ldots < s_{|S|}\} = S
	\end{equation*}
	as
	\begin{equation*}
		(S,n)\mapsto s_n
	\end{equation*}
\end{definition}
Using the ascension operator, we enumerate the elements
in $\U(P)$ with $P$ as in Definition~\ref{def:sou},
taking note that $\U(P)\cap \power((0,1))$,
$\U(P)\cap \power((1,\iffy))$, $\U(P)\cap \power((\iffy,-1))$
and $\U(P)\cap \power((-1,0))$ are finite strictly ordered
sets.
In other words, we define a mapping from $\{0,\ldots,|\U(P)|-1\}$,
which is $\{0,\ldots,8\cdot(|P|+1)-1\}$ according to
Proposition~\ref{prop:cou}, into $\U(P)$, called the
\begin{definition}[Unum enumeration]\label{def:ue}
	Let $P$ as in Definition~\ref{def:sou}.
	The Unum enumeration
	\begin{equation*}
		u\colon\{0,\ldots,|\U(P)|-1\}\to\U(P)
	\end{equation*}
	is defined as
	\begin{equation*}
		n \mapsto
		\begin{cases}
			\{ 0 \} & n = 0\cdot(|P|+1) \\
			\asc(\U(P)\cap \power((0,1)), n-0\cdot(|P|+1)) &
				0\cdot(|P|+1) < n < 2\cdot(|P| + 1) \\
			\{ 1 \} & n = 2\cdot(|P|+1) \\
			\asc(\U(P)\cap \power((1,\iffy)), n-2\cdot(|P|+1)) &
				2\cdot(|P|+1) < n < 4 \cdot (|P|+1) \\
			\{ \iffy \} & n = 4\cdot (|P|+1) \\
			\asc(\U(P)\cap \power((\iffy,-1)), n-4\cdot(|P|+1)) &
				4\cdot(|P|+1) < n < 6 \cdot (|P|+1) \\
			\{ -1 \} & n = 6\cdot (|P|+1) \\
			\asc(\U(P)\cap (-1,0), n-6\cdot(|P|+1)) &
				6\cdot(|P|+1) < n < 8 \cdot (|P|+1).
		\end{cases}
	\end{equation*}
\end{definition}
\begin{remark}[enumeration of infinity]
	For arbitrary $\U(P)$ with
	$P$ as in Definition~\ref{def:sou} it follows that
	\begin{equation*}
		u\left(\frac{|\U(P)|}{2}\right) = \{ \iffy \}.
	\end{equation*}
\end{remark}
To describe the enumeration intuïtively, we cut the $\R^*$-circle at
$0$ and trace all Unums from $0$ to $0$ in a counter-clockwise direction.
In the machine the Unum enumeration mapping can be realised using unsigned integers.
One can deduce that for a given number of \emph{Unum bits} $n_b\in\N$ an unsigned
$n_b$-bit integer can represent $2^{n_b}$ values, namely
$0$ through $2^{n_b}-1$.
\par
Even though in theory the size of $|P|$ can be arbitrary, as it is the case
for the provided toolbox, one must respect the fundamental data-types in a
machine, resulting in the limitation $n_b\in\{8,16,32,64,\ldots\}$ in the interest
of not wasting any bit patterns in the process.
It follows that we are interested in finding out the required lattice size
for a given $n_b$.
\begin{proposition}[lattice size depending on Unum bits]
	Let $n_b\in\N$, $n_b>2$ and $P$ as in Definition~\ref{def:sou}. Given $n_b$
	Unum bits it follows that
	\begin{equation*}
		|P|=2^{n_b-3}-1.
	\end{equation*}
\end{proposition}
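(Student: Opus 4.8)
The plan is to reduce the statement to the cardinality formula of Proposition~\ref{prop:cou} together with the modelling requirement, stated in the paragraph preceding the proposition, that an $n_b$-bit unsigned integer indexes the Unums without wasting any bit patterns. Concretely, the Unum enumeration of Definition~\ref{def:ue} is a bijection between $\{0,\ldots,|\U(P)|-1\}$ and $\U(P)$, while an $n_b$-bit unsigned integer ranges over exactly the $2^{n_b}$ values $0,\ldots,2^{n_b}-1$; demanding that every bit pattern correspond to a Unum therefore forces
\begin{equation*}
	|\U(P)| = 2^{n_b}.
\end{equation*}

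First I would invoke Proposition~\ref{prop:cou}, which gives $|\U(P)| = 8\cdot(|P|+1)$. Substituting this into the identity above yields $8\cdot(|P|+1) = 2^{n_b}$, and since $8 = 2^3$ this rearranges to $|P|+1 = 2^{n_b-3}$, that is, $|P| = 2^{n_b-3}-1$, which is the claim. Then I would observe that the hypothesis $n_b>2$, i.e.\ $n_b\ge 3$, is precisely what makes this consistent: it guarantees that $2^{n_b}$ is divisible by $8$, so that $2^{n_b}/8 = 2^{n_b-3}$ is an integer, and that $2^{n_b-3}\ge 1$, hence $|P| = 2^{n_b-3}-1\ge 0$ is a legitimate lattice size in the sense of Definition~\ref{def:sou} (with $|P|=0$ for $n_b=3$ corresponding to the empty lattice).

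There is essentially no hard computational step here; the proof is a one-line rearrangement once Proposition~\ref{prop:cou} is in hand. The only point that requires care is that $|\U(P)| = 2^{n_b}$ is a design decision (\enquote{not wasting any bit patterns}) rather than a previously proved fact, so it must be stated explicitly as the starting assumption, and it is worth remarking that the divisibility of $2^{n_b}$ by $8$ — equivalently $n_b\ge 3$ — is exactly the obstruction that the bound $n_b>2$ removes.
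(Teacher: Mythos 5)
Your proposal is correct and follows essentially the same route as the paper: assume $|\U(P)|=2^{n_b}$ from the bit-pattern requirement, invoke Proposition~\ref{prop:cou} to get $|\U(P)|=8\cdot(|P|+1)$, and rearrange. Your additional remark on why $n_b>2$ guarantees divisibility by $8$ is a welcome clarification that the paper leaves implicit.
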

\begin{proof}
	With $n_b$ Unum bits it follows that $|\U(P)|=2^{n_b}$. According to
	Proposition~\ref{prop:cou} we know that $|\U(P)|=8\cdot(|P|+1)$ and thus
	\begin{equation*}
		2^{n_b}=8\cdot(|P|+1)=2^3\cdot(|P|+1) \quad\Leftrightarrow\quad
		|P|=2^{n_b-3}-1\qedhere
	\end{equation*}
\end{proof}
According to the results obtained in Section~\ref{sec:lattice},
we will only take decade lattices into account. We are led to the
\begin{definition}[set of machine Unums]\label{def:somu}
	Let $n_b\in\N$, $n_b>2$ and $n_s\in\N$. The set of machine Unums
	with $n_b$ bits and $n_s$ significant digits is defined as
	\begin{equation*}
		\U_M(n_b,n_s) := \U(P_D(2^{n_b-3}-1,n_s).
	\end{equation*}
\end{definition}
Having found an expression for machine Unums, it is now possible to
represent arbitrary elements of $\power(\U_M(n_b,n_s))$ in the
machine to model sets of real numbers.
\subsection{Operations on Sets of Real Numbers}\label{subsec:sornops}
Unums alone are not very useful for arithmetic purposes, given the
nature of dual Unum operations (see Definition~\ref{def:duo}), which
we want to illustrate with the following example.
\begin{example}
	Let $P=\{ 2, 3.5, 5, 6\}$, which satisfies Definition~\ref{def:sou}.
	We see that $(1,2),\{3.5\} \in \U(P)$, but
	\begin{equation*}
		\blur\left( (1,2) \boxplus \{3.5\} \right) =
		\blur\left( (4.5,5.5) \right) =
		\{(3.5,5), \{ 5 \}, (5,6)\} \notin \U(P).
	\end{equation*}
\end{example}
The basic datatype, thus, has to be an element of
$\power(\U_M(n_b,n_s))$. Given this set is finite with
$2^{(|\U_M(n_b,n_s)|)}=2^{2^{n_b}}$ elements, a bit string of length
$2^{n_b}$ can represent all elements of $\power(\U_M(n_b,n_s))$.
We call this bit string a \enquote{SORN} for \enquote{set of real numbers}.
\par
Operations on SORNs are carried out in the machine by having
lookup tables (LUTs) for $\blur(u(i)\star u(j))$, where
$\star \in \{\boxplus,\boxtimes\}$ is an $\R^*$-Flake-operation
evaluated for arbitrary Unum-indices $i,j\in\{0,\ldots,2^{n_b}-1\}$.
Given $\boxplus$ and $\boxtimes$ are
associative, limiting the lookup table to $i\le j$ is sufficient,
resulting in a triangular array for each operation.
\par
The results $\blur(u(i)\star u(j))$, being connected subsets of
$\U_M(n_b,n_s)$, can be expressed as an oriented range
$[u(m),u(n)]$ with $m,n\in\{0,\ldots,2^{n_b}-1\}$ and $m\le n$,
containing all Unums between $u(m)$ and $u(n)$.
This can be stored in the machine as indices $\{m,n\}$ each taking
up $n_b$ bit of storage. Thus, each table entry
takes up $2\cdot n_b$ bit of storage.
\begin{proposition}[size of LUTs]
	The Unum LUTs for $\boxplus$ and $\boxtimes$ take up
	$n_b \cdot 2^{n_b+1}\cdot(2^{n_b}+1)$ bit.
\end{proposition}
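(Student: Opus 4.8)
The plan is to obtain the total storage as (number of table entries) times (bit cost per entry), summed over the two operations $\boxplus$ and $\boxtimes$. So first I would fix one operation $\star\in\{\boxplus,\boxtimes\}$ and count the entries of its lookup table. As already observed in the text, associativity of $\boxplus$ and $\boxtimes$ allows restricting to indices $i\le j$, so the table is the triangular array $\{(i,j) : 0\le i\le j\le 2^{n_b}-1\}$. The number of such pairs, including the diagonal $i=j$, over an index set of size $N:=2^{n_b}$ is the number of $2$-element multisets, namely $\binom{N}{2}+N=\binom{N+1}{2}=\frac{N(N+1)}{2}=2^{n_b-1}\cdot(2^{n_b}+1)$.

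Next I would invoke the observation made just before the statement: each entry $\blur(u(i)\star u(j))$ is a connected subset of $\U_M(n_b,n_s)$ and is therefore representable as an oriented range $[u(m),u(n)]$, stored as the pair of indices $\{m,n\}$ with $m,n\in\{0,\ldots,2^{n_b}-1\}$; each such index fits in an unsigned $n_b$-bit integer, so every table entry occupies exactly $2\cdot n_b$ bit. Multiplying the entry count by the per-entry cost gives, for a single operation, a lookup table of size $2 n_b\cdot 2^{n_b-1}(2^{n_b}+1)=n_b\cdot 2^{n_b}(2^{n_b}+1)$ bit.

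Finally, since there is one such triangular lookup table for $\boxplus$ and one for $\boxtimes$, the combined storage is twice the above, i.e. $n_b\cdot 2^{n_b+1}(2^{n_b}+1)$ bit, which is exactly the claimed expression.

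The only step requiring any care is the combinatorial count of the triangular array — in particular remembering to include the diagonal $i=j$ (since $u(i)\star u(i)$ must be tabulated as well) and simplifying $\frac{2^{n_b}(2^{n_b}+1)}{2}$ to $2^{n_b-1}(2^{n_b}+1)$; everything else follows immediately from the data-layout conventions fixed in the preceding paragraphs, so I do not expect a genuine obstacle here.
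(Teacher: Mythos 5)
Your proposal is correct and follows essentially the same route as the paper: both count the triangular array (your multiset count $\binom{N+1}{2}$ is exactly the paper's Gau\ss{} sum $\sum_{i=1}^{2^{n_b}} i$), multiply by $2\cdot n_b$ bit per entry, and double for the two operations.
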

\begin{proof}
	With $2^{n_b}$ rows, we know that each LUT has
	$\sum_{i=1}^{2^{n_b}}{i}$
	entries. Using the Gauß summation formula and
	the facts that each entry takes up $2\cdot n_b$
	bit and we have two operations and, thus, two
	LUTs, the total storage size is
	\begin{equation*}
		2\cdot (2\cdot n_b) \cdot \left(\sum_{i=1}^{2^{n_b}}{i}\right)
		\text{bit} =
		4 \cdot n_b \cdot \left(\frac{2^{n_b}\cdot(2^{n_b}+1)}{2}\right)
		\text{bit} =
		n_b \cdot 2^{n_b+1}\cdot(2^{n_b}+1) \, \text{bit}.\qedhere
	\end{equation*}
\end{proof}
With the lookup tables constructed, operations on SORNs
are analogous to dual Unum operations (see Definition~\ref{def:duo}),
with the only difference that the set union for the bit strings is
realised with a bitwise OR.
\subsection{Unum Toolbox}\label{subsec:toolbox}
To examine the numerical properties of Unums, there needs to be a
toolbox to see how this concept works out inside the machine. The
reason why a new toolbox was developed in the course of this thesis
is that all other toolboxes available at the time of writing are
not using LUTs to do calculations. Instead, they emulate Unum-arithmetic
with floating-point numbers that are mapped to a given lattice.
\par
To give an answer to the question if Unums could in theory replace
floating-point numbers for some applications, it is necessary to avoid
floating-point arithmetic at run-time as much as possible.
A possible future machine implementing Unums in hardware would also
be constrained to LUTs and would not be able to
use floating-point numbers in the process and at the same time
leverage the energy and complexity savings projected by
\textsc{Gustafson} in \cite{gu16-b}.
\par
The Unum toolbox programmed in the course of this thesis and
used to examine the numerical behaviour of Unums in
Section~\ref{sec:revisitieee} is split up in two parts.
The first part is the environment generator \texttt{gen}
(see Listing~\ref{lst:genc}), generating the LUTs in \texttt{table.c},
based on type definitions in \texttt{table.h} (see Listing~\ref{lst:tableh})
and the environment parametres in \texttt{config.mk} (see Listing~\ref{lst:configmk}),
and the lattice-specific toolbox-header \texttt{unum.h}. The choice of
lattice points can be arbitrary and it is relatively simple to extend the
generator, but because of the results obtained in Section~\ref{sec:lattice}
only the generating function for a decade Unum lattice is implemented
(see \texttt{gendeclattice()} in Listing~\ref{lst:genc}).
\par
The second part is the toolbox itself (see Listing~\ref{lst:unumc}),
working with the previously generated \texttt{table.c} and \texttt{unum.h},
but being lattice-agnostic in general. The fundamental data type for
operations is \texttt{SORN} defined in \texttt{unum.h},
corresponding to the SORN-concept constructed earlier. Just as proposed
by \textsc{Gustafson} in \cite[Section~3.2]{gu16-b}, the SORN is a bit
array on which operations are carried out as proposed and close to
how it would happen in a native machine implementation.
\par
The provided toolbox functions (see \texttt{unum.h} and Listing~\ref{lst:unumc})
are of both arithmetic and set theoretical nature. The arithmetic functions
corresponding to addition and subtraction are \texttt{uadd()} and
\texttt{usub()}. Addition in this context means the dual Unum operation
$\langle\boxplus\rangle$ using the addition LUT \texttt{addtable} in
\texttt{table.c}. Subtraction is achieved
by negating the second argument on a per-Unum basis and performing an
addition, preserving set-dependencies if present.
Analogously, there are \texttt{umul()} and \texttt{udiv()} for
multiplication and division using $\langle\boxtimes\rangle$ and
the multiplication LUT \texttt{multable} in \texttt{table.c}.
The arithmetic functions \texttt{uneg()} and \texttt{uinv()}
negate and invert a SORN respectively on a per-Unum basis corresponding
to the $\R^*$-Flake negation \enquote{$-$} and inversion \enquote{$/$}.
The function \texttt{uabs()} corresponds to a Unum modulus function and the
\texttt{ulog()} function is an implementation of the $\ln$ function
on Unums using the LUT \texttt{logtable}.
\par
SORN operations and modifications are generalised in the functions
\texttt{\textunderscore sornop()} and \texttt{\textunderscore sornmod()} in
Listing~\ref{lst:unumc} respectively. They are the foundation for almost
all arithmetic functions of this toolbox. Dependent sets are detected
by comparing the two pointers to the operands passed to the arithmetic
functions. If they are equal, the sets are dependent.
\par
The set theoretical functions are \texttt{uemp()} and \texttt{uset()}
for emptying and setting SORNs, \texttt{ucut()} and \texttt{uuni()}
for cutting and taking the union of two SORNs and \texttt{uequ()} and
\texttt{usup()} to check if two SORNs are equal and if one SORN is
the superset of another.
\par
The input and output functions play a special role in this toolbox.
\texttt{uint()} is the only function using floating point numbers to add a
closed interval to a SORN and \texttt{uout()} prints a SORN in a
human-readable format to standard output.
\par
When using the Unum toolbox, only the components \texttt{unum.h}
and the static library \texttt{libunum.a} are relevant and need to
be present when compiling programs using the Unum toolbox
(see Section~\ref{sec:unumproblems}). All functions are reëntrant
and, thus, thread-safe.
\section{Revisiting Floating-Point-Problems}\label{sec:revisitieee}
Using the toolbox presented in Subsection~\ref{subsec:toolbox},
we implement the IEEE 754 floating-point problems studied in
Section~\ref{sec:ieeeproblems} and examine their behaviour within
the Unum arithmetic. For all examples in this section the
environment was set to $(n_b,n_s)=(12,2)$.
\subsection{The Silent Spike}\label{subsec:unum-silentspike}
We can express the spike function (\ref{eq:spike}) within the
Unum arithmetic, using a LUT-based natural logarithm
\begin{equation*}
	\LN\colon\power(\U(P))\to\power(\U(P))
\end{equation*}
defined as
\begin{equation*}
	U  \mapsto
	\begin{cases}
		{\langle \ln_{\F} \rangle}(U) &
			U \cap \power((\iffy,0]) = \emptyset \\
		\emptyset & \text{else}
	\end{cases}
\end{equation*}
(see \texttt{ulog()} in Listing~\ref{lst:unumc}) and an elementary
Unum modulus function $|\cdot|$
(see \texttt{uabs()} in Listing~\ref{lst:unumc}), as
\begin{equation}\label{eq:spike-unum}
	F(X) := \LN(|\blur(\{3\})\boxtimes(\blur(\{1\})\boxplus -X)\boxplus
	        \blur(\{1\})|).
\end{equation}
As we have previously evaluated $f$ in an environment of all floating-point
numbers of the singularity at $\frac{4}{3}$ (see Figure~\ref{fig:spike}), we
evaluate $F$ in an environment
of all Unums of the singularity $\blur\left(\frac{4}{3}\right)$ using
the Unum toolbox (see Listing~\ref{lst:spikec-unum}).
The behaviour is exhibited in Figure~\ref{fig:spike-unum} and it
can be observed that the spike is not hidden any more as was the
case with the floating-point implementation.
\begin{figure}[htpb]
	\centering
	\begin{tikzpicture}
		\begin{axis}[xlabel=$x$, ylabel=$F(x)$,
		         xmin=0.8,xmax=2.0,
		         ymin=-4.5,ymax=0.8,
		         extra x ticks={0.8333333333, 1.3333333333, 1.9},
		         extra x tick style={xticklabel pos=right,
		         	xtick pos=right},
		         extra y ticks={},
		         extra y tick style={yticklabel pos=right,
		         	ytick pos=right},
		         extra x tick labels={$\frac{1}{1.2}$, $\frac{4}{3}$, $1.9$},
		         extra tick style={grid=major}]
			\draw [black, rounded corners]
				(axis cs:0.8333, 0.1818) rectangle (axis cs:0.9091, 0.4167);
			\addplot[mark=o,draw=black] coordinates {
				(0.9091, 0.1818) (0.9091, 0.3448)
			};
			\draw [black, rounded corners]
				(axis cs:0.9091, 0) rectangle (axis cs:1, 0.3448);
			\addplot[mark=*,draw=black] coordinates {
				(1, 0)
			};
			\draw [black, rounded corners]
				(axis cs:1, -0.4167) rectangle (axis cs:1.1, 0);
			\addplot[mark=o,draw=black] coordinates {
				(1.1, -0.4167) (1.1, -0.3333)
			};
			\draw [black, rounded corners]
				(axis cs:1.1, -1) rectangle (axis cs:1.2, -0.3333);
			\addplot[mark=o,draw=black] coordinates {
				(1.2, -1) (1.2, -0.8333)
			};
			\draw [black, rounded corners]
				(axis cs:1.2, -2.4) rectangle (axis cs:1.3, -0.8333);
			\addplot[mark=o,draw=black] coordinates {
				(1.3, -2.4) (1.3, -1.7)
			};
			\draw [black, rounded corners]
				(axis cs:1.3, -5) rectangle (axis cs:1.4, -1.1);
			\node at (axis cs:1.35,-3.4) {$\iffy$};
			\draw[->] (axis cs:1.35,-3.6) -- (axis cs: 1.35 ,-4.4)
				node {};
			\addplot[mark=o,draw=black] coordinates {
				(1.4, -2.4) (1.4, -1.1)
			};
			\draw [black, rounded corners]
				(axis cs:1.4, -2.4) rectangle (axis cs:1.5, -0.6667);
			\addplot[mark=o,draw=black] coordinates {
				(1.5, -0.7143) (1.5, -0.6667)
			};
			\draw [black, rounded corners]
				(axis cs:1.5, -0.7143) rectangle (axis cs:1.6, -0.0909);
			\addplot[mark=o,draw=black] coordinates {
				(1.6, -0.4167) (1.6, -0.0909)
			};
			\draw [black, rounded corners]
				(axis cs:1.6, -0.4167) rectangle (axis cs:1.7, 0.2632);
			\addplot[mark=o,draw=black] coordinates {
				(1.7, 0.2632)
			};
			\addplot[mark=none,draw=black] coordinates {
				(1.7, 0) (1.7, 0.2632)
			};
			\addplot[mark=*,draw=black] coordinates {
				(1.7, 0)
			};
			\draw [black, rounded corners]
				(axis cs:1.7, 0) rectangle (axis cs:1.8, 0.4167);
			\addplot[mark=o,draw=black] coordinates {
				(1.8, 0.2564) (1.8, 0.4167)
			};
			\draw [black, rounded corners]
				(axis cs:1.8, 0.2564) rectangle (axis cs:1.9, 0.5882);
		\end{axis}
	\end{tikzpicture}
	\caption{Evaluation of the Unum spike function $F$
		(see (\ref{eq:spike-unum})) on all Unums in $[\frac{1}{1.2},1.9]$ with
		$(n_b,n_s)=(12,2)$ ($\circ\slash\bullet$ demarks open/closed
		interval endpoints); see Listing~\ref{lst:spikec-unum}.
	}
	\label{fig:spike-unum}
\end{figure}
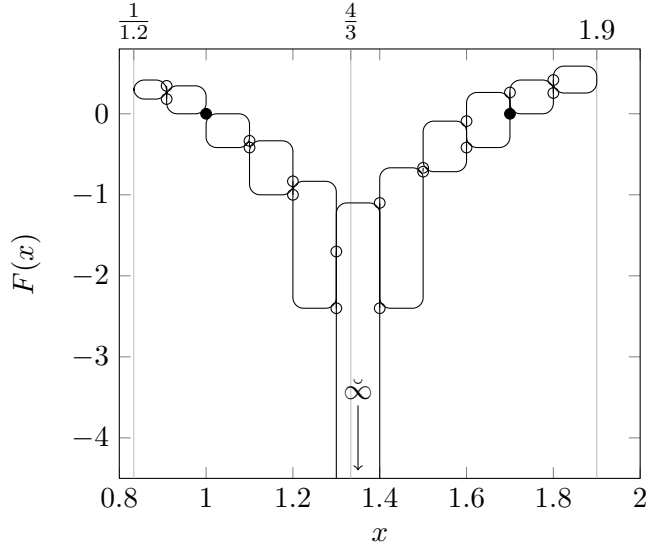
\par
This shows that Unums can effectively be used to quickly evaluate
guaranteed bounds for a given function and observe singular behaviour
without taking the risk of missing it. The bounds are guaranteed
as the foundation for the Unum arithmetic are the well-defined
operations on $\R^*$-Flakes (see Definition~\ref{def:sorf} and
Theorem~\ref{theo:wdoff}).
\subsection{Devil's Sequence}\label{subsec:unum-devil}
The devil's sequence is translated into Unum arithmetic by transforming
(\ref{eq:devil}) into the equivalent SORN-sequence
\begin{equation*}
	U_n :=
	\begin{cases}
		\blur(\{2\}) & n = 0\\
		\blur(\{-4\}) & n = 1 \\
		\blur(\{111\}) \boxplus -\blur(\{1130\}) \boxtimes /U_{n-1} \boxplus
		\blur(\{3000\}) \boxtimes /(U_{n-1}\boxtimes U_{n-2}) &
			n \ge 2.
	\end{cases}
\end{equation*}
Running the Unum toolbox implementation (see
Listing~\ref{lst:devilc-unum}) of this problem, we obtain
\begin{equation*}
	U_{25} = \R^*.
\end{equation*}
This indicates the instability of the problem posed. Even though
the information loss is great, this result can at least be a
warning to investigate the numerical behaviour of the given sequence.
\subsection{The Chaotic Bank Society}\label{subsec:unum-bank}
Taking a look at the chaotic bank society problem, we determine the
equivalent SORN-sequence to (\ref{eq:bank}) as
\begin{equation*}
	A_n :=
	\begin{cases}
		A_0 & n = 0\\
		A_{n-1} \boxtimes \blur(\{n\}) \boxplus -\blur(\{1\}) & n \ge 1.
	\end{cases}
\end{equation*}
Again, running the Unum toolbox implementation (see
Listing~\ref{lst:bankc-unum}), we obtain for
$A_0=\blur(\{ e -1 \})=(1.7,1.8)$
\begin{equation*}
	A_{25} = \R^*.
\end{equation*}
This is consistent with the theoretical results we obtained, given
we can find an $\varepsilon >0$ such that
$A_0$ contains $e-1 + \delta$ with
$\delta\in(-\varepsilon,\varepsilon)$, as
$e-1\not\in P_D(2^{n_b-3}-1,n_d)$.\\
We observe that, even though the results do not lie about the solution,
the information loss is great.
\par
Concluding, introducing Unums as a number format
allowing you to neglect stability analysis has turned out to be a
false promise. We can also not sustain the notion that
naïvely implementing algorithms in Unums abolishes the need for a
break condition. Besides complete information loss, sticking- and
creeping-effects elaborated in Subsection~\ref{subsec:stickcreep}
additionally make it difficult to think of proper ways to do that.
\section{Discussion}
With the theoretical formulation of Unums and practical results, it
is now time to discuss the format taking into account the results
obtained in the previous chapters.
\subsection{Comparison to IEEE 754 Floating-Point Numbers}
It is of central interest to see how the Unums hold up to the previously
introduced IEEE~754 floating-point numbers.
To illustrate the behaviour of the machine Unums, different parametres
of the systems are laid out in Table~\ref{table:unum}.
\begin{table}[htpb]
	\centering
	\begin{tabular}{ | l || l | l | l | l | }
		\hline
		$n_b$ (bit) & 8 & 16 & 32 & 64 \\
		\hline
		$n_s$ & 1 & 3 & 7 & 15 \\
		\hline\hline
		$|P_D|$ & $=3.10\cdot 10^{+1}$ &
			$\approx 8.19 \cdot 10^{+3}$ &
			$\approx 5.37 \cdot 10^{+8}$ &
			$\approx 2.31 \cdot 10^{+18}$ \\
		\hline
		$|\U_M|$ & $=2.56\cdot 10^{+2}$ &
			$\approx 6.55 \cdot 10^{+4}$ &
			$\approx 4.29 \cdot 10^{+9}$ &
			$\approx 1.84 \cdot 10^{+19}$ \\
		\hline
		$\max(P_D)$ & $=5.00\cdot 10^{+3}$ &
			$= 1.91 \cdot 10^{+9}$ &
			$\approx 6.87 \cdot 10^{+59}$ &
			$\approx 1.43 \cdot 10^{+2562}$ \\
		\hline
		${\max(P_D)}^{-1}$ &
			$= 2.00 \cdot 10^{-4}$ &
			$\approx 5.24 \cdot 10^{-10}$ &
			$\approx 1.45 \cdot 10^{-60}$ &
			$\approx 6.99 \cdot 10^{-2563}$ \\
		\hline\hline
		Size of LUTs &
			$\approx 132$ kB &
			$\approx 17$ GB &
			$\approx 1.48\cdot 10^{20}$ B &
			$\approx 5.44 \cdot 10^{39}$ B \\
		\hline
	\end{tabular}
	\caption{Machine Unums properties for $n_b\in\{8,16,32,64\}$ and
		$n_s$ selected to match IEEE~754 significant decimal digits
		($=\lfloor\log_{10}(2^{n_m+1})\rfloor$) for each storage
		size.}
	\label{table:unum}
\end{table}
\par
Comparing Table~\ref{table:unum} to Table~\ref{table:floatsizes}, we
note that for the same number of storage bits, the dynamic range,
the ratio of the largest and smallest representable numbers, of Unums
is orders of magnitude larger than that of IEEE~754 floating-point numbers.
For example, with a storage size of 16 bit, the dynamic range of IEEE~754
floating-point numbers is
\begin{equation*}
	\frac{\max(\M_1)}{\min(\M_0\cap \R^+_{\neq 0})}
	\approx \frac{6.55 \cdot 10^{+4}}{5.96 \cdot 10^{-8}}
	\approx 1.10 \cdot 10^{12}.
\end{equation*}
For Unums, we obtain
\begin{equation*}
	\frac{\max(P_D)}{{\max(P_D)}^{-1}}
	\approx \frac{1.91 \cdot 10^{+9}}{5.24 \cdot 10^{-10}}
	\approx 3.65 \cdot 10^{18}
\end{equation*}
respectively, which is an increase of roughly 6 orders of magnitude.
The reason for this significant difference is the fact that no bit patterns are
wasted for $\nan$-representations in the Unum number format.
\par
One the other hand, one can see that any values for $n_b$ beyond roughly
$12$ bit (corresponding to a LUT size of $\approx 50$ MB) is not
feasible given the huge size of the LUTs. It shows that we can only really
reason about machine Unum environments with $n_b \in \{ 3,\ldots,12 \}$.
\subsection{Sticking and Creeping}\label{subsec:stickcreep}
Working with the Unum toolbox, two effects seem to influence iterative
calculations substantially. A fitting description would be to call
them \emph{sticking-} and \emph{creeping-effects} respectively.
They can be observed, for instance, when evaluating infinite series
within the Unum arithmetic, and this example will be examined here.
\begin{example}[\textsc{Euler}'s number]\label{ex:euler}
	Determining \textsc{Euler}'s number in the Unum arithmetic can be done by
	defining a SORN-series $E_n$ satisfying
	\begin{equation*}
		\blur(\{e\}) \in \lim_{n\to\infty} (E_n),
	\end{equation*}
	where
	\begin{equation}\label{eq:euler}
		E_n := \mathop{\Large\langle\boxplus\rangle}\limits_{k=0}^{n}
			\left[{/}
			{\mathop{\Large\langle\boxtimes\rangle}\limits_{\ell=0}^{k}
			{\blur(\{\ell\})}}\right],
	\end{equation}
	which corresponds to the partial sums of the infinite series representation
	of $e$ as
	\begin{equation*}
		e = \sum_{k=0}^{\infty}{\frac{1}{k!}},
	\end{equation*}
	Using the Unum toolbox (see Listing~\ref{lst:euler}), the partial sums
	of this problem are visualised in Figure~\ref{fig:euler}.
	The first $21$ iterates are depicted and illustrate a
	pathological behaviour.
	\begin{figure}[htpb]
		\centering
		\begin{tikzpicture}
			\begin{axis}[xlabel=$n$, ylabel=$E_n$,
			         extra x ticks={0,20},
			         extra x tick style={xticklabel pos=right,
			         	xtick pos=right},
			         extra y ticks={2.7182818284},
			         extra y tick style={yticklabel pos=right,
			         	ytick pos=right},
			         extra y tick labels={e},
			         extra tick style={grid=major}]
				\addplot[mark=*,draw=black] coordinates {(0 ,1.0)};
				\addplot[mark=*,draw=black] coordinates {(1 ,2.0)};
				\addplot[mark=*,draw=black] coordinates {(2 ,2.5)};
				\addplot[mark=o,draw=black] coordinates {(3 ,2.6) (3 ,2.7)};
				\addplot[mark=o,draw=black] coordinates {(4 ,2.6) (4 ,2.8)};
				\addplot[mark=o,draw=black] coordinates {(5 ,2.6) (5 ,2.9)};
				\addplot[mark=o,draw=black] coordinates {(6 ,2.6) (6 ,3.0)};
				\addplot[mark=o,draw=black] coordinates {(7 ,2.6) (7 ,3.1)};
				\addplot[mark=o,draw=black] coordinates {(8 ,2.6) (8 ,3.2)};
				\addplot[mark=o,draw=black] coordinates {(9 ,2.6) (9 ,3.3)};
				\addplot[mark=o,draw=black] coordinates {(10,2.6) (10,3.4)};
				\addplot[mark=o,draw=black] coordinates {(11,2.6) (11,3.5)};
				\addplot[mark=o,draw=black] coordinates {(12,2.6) (12,3.6)};
				\addplot[mark=o,draw=black] coordinates {(13,2.6) (13,3.7)};
				\addplot[mark=o,draw=black] coordinates {(14,2.6) (14,3.8)};
				\addplot[mark=o,draw=black] coordinates {(15,2.6) (15,3.9)};
				\addplot[mark=o,draw=black] coordinates {(16,2.6) (16,4.0)};
				\addplot[mark=o,draw=black] coordinates {(17,2.6) (17,4.1)};
				\addplot[mark=o,draw=black] coordinates {(18,2.6) (18,4.2)};
				\addplot[mark=o,draw=black] coordinates {(19,2.6) (19,4.3)};
				\addplot[mark=o,draw=black] coordinates {(20,2.6) (20,4.4)};
			\end{axis}
		\end{tikzpicture}
		\caption{Evaluation of the Unum \textsc{Euler} partial sums
		(\ref{eq:euler}) for iterations $n\in\{0,\ldots,20\}$ with
		$(n_b,n_s)=(12,2)$ ($\circ\slash\bullet$ demarks open/closed
		interval endpoints); see Listing~\ref{lst:euler}.
		}
		\label{fig:euler}
	\end{figure}
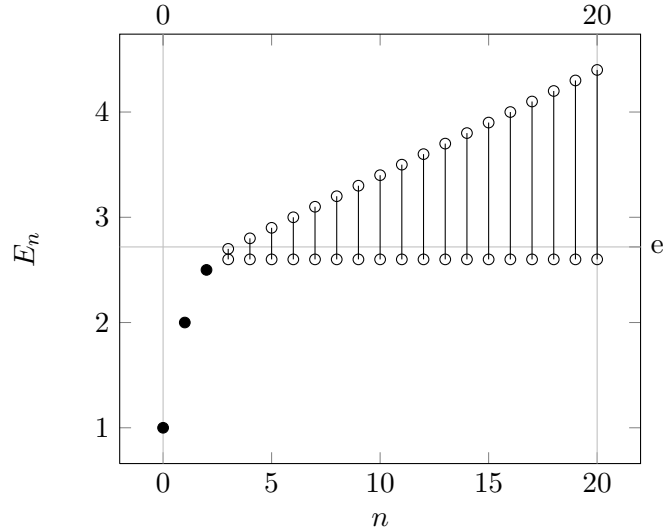
	\par
	Starting from $n=3$, the lower bound of the solution set is stuck
	at the value 2.6.
	One can also observe that the upper bound is growing linearly on each
	iteration. It creeps away from $e$ and reduces the quality of
	the solution with each step.
	\par
	The cause of these sticking- and creeping-effects is the fact that we add
	infinitesimally small values to the SORN on each iteration.
	The lower bound gets stuck because the value added is smaller than the
	length of the lowest interval, hitting a blind spot of the blur function.
	The upper bound creeps away because even though we add an infinitesimally small
	value, it expands to at least the next following Unum value.
\end{example}
This problem makes it impossible to work with Unums to examine infinite
series or sequences and iterative problems in general. Even though
Unums do not lie about the solution, the quality of it is decreased on each
iteration, as we could already see in Subsections \ref{subsec:unum-devil} and
\ref{subsec:unum-bank}. There is also no chance of formulating a break condition for
the given algorithm because of this behaviour. We observe comparable
problems for finding break conditions for infinite series that do not
converge quickly using floating-point numbers, so we can generally think
of it as an unsolved problem following from the finite nature of the machine.
\subsection{Lattice Switching}
A strong theoretical advantage is that one could evaluate an expression
on a set of Unums with a coarse lattice first and then refine the
lattice as soon as the set of possible solutions shrinks.
It is questionable how this could be possible within the machine.
One may find ways to reduce the size of the lookup tables, but assuming
multiple different lattice-precisions including all LUTs would take up
massive amounts of space on a microchip.
Additionally, there needs to be a theory on how existing SORNs are
translated between the different lattices, which might require its own
set of LUTs for each transition, greatly increasing complexity.
\par
If the solution space is only observed within small bounds, considerable
amounts of memory are wasted for set representations beyond
the bounds using a naïve SORN representation. The SORNs may be needed
for intermediate values of a calculation, which could easily expand
beyond the bounds of the solution space, but not for the final results.
\par
This problem can be approached using a run-length encoding for SORNs
comparable to how LUTs were implemented (see Subsection~\ref{subsec:sornops}),
but this would make SORN operations in general less efficient unless
the operations take place directly on top of Unum enumeration indices.
\subsection{Complexity}
Despite the efforts to simplify arithmetic operations and overhead by
creating lookup tables and working on bit strings in a simple manner,
the cost of this simplification weighs heavily.
The contradiction lies within the fact that to at least reduce the
detrimental effects of sticking and creeping it is necessary to
increase the number of Unum bits $n_b$. However, this is only
possible up to a certain point until the LUTs become too large.
In this context, dealing with strictly monotonic functions like $\ln$
in the Unum context requires LUTs for each of them as well
(see Subsection~\ref{subsec:unum-silentspike}).
\par
It is questionable how useful the Unum arithmetic is within the tight
bounds set by these limiting factors. However, it should be taken into
account that there are possible uses for Unums on very coarse grids,
for instance inverse kinematics.
\textsc{Gustafson} also identifies the problem (see
\cite[Section~6]{gu16-b}) and notes that this problem could indicate
that Unums are
\textcquote[Section~6.2]{gu16-b}{
	\textelp{} primarily practical for low-accuracy but high-validity applications,
	thereby complementing float arithmetic instead of replacing it.
}
\chapter{Summary and Outlook}
In the course of this thesis we started off with the construction of a
mathematical description of IEEE 754 floating-point numbers, compared the
properties of different binary storage formats and studied examples which
uncover inherent weaknesses of this arithmetic.
\par
Following from these observations, we constructed the projectively extended
real numbers based on a small set of axioms. After introducing a
definition of finite and infinite limits on the projectively extended real
numbers, we showed their well-definedness in terms of these limits.
Based on this foundation, we developed the Flake arithmetic and proved
well-definedness in terms of set theory.
\par
This effort led us to the mathematical foundation of Unums, which as
proposed approaches the interval arithmetic dependency problem in a
new way and is meant to be easy to implement in the machine. We presented
different types of Unum lattices, evaluated the requirements for hardware
implementations and studied the numerical behaviour in a Unum toolbox,
which was developed in the course of this thesis. Using these results,
we were able to draw the conclusion that Unums may not be a number
format allowing naïve computations, but exhibited promising results in
low-precision but high-validity applications.
\par
The author expected to find drawbacks of this nature for the Unum
number format, as any numerical system exhibits
its strength only within certain conditions, making it easy to find
examples where it fails. In this context, it was observed that IEEE 754
floating-point numbers and Unums complement each other.
Given the nature of Unum arithmetic, it may be on the one hand difficult to
do stability analysis due to the complexity of the arithmetic rules,
but on the other hand the guaranteed bounds of the result do not cover up
when an algorithm is not fit for this environment and indicate the need
to approach the problem using a different numerical approach.
\par
At the point of writing, the revised Unum format was approached
with neither a mathematical foundation nor formalisation. The available
toolboxes were only emulating Unums using floating-point arithmetic, hiding
numerous drawbacks with regard to the complexity of lookup tables.
The results obtained in this thesis make it possible to reason about
Unums in the bounds that will also be present when it comes to
implementing Unums in hardware and not only in software.
\par
In general it is questionable if the approach of using lookup tables
is really the best way to go, despite the possible advantage of
simplifying calculations.
It is questionable if it is really worth it to throw the entire
IEEE 754 floating-point infrastructure overboard and have two
exclusive numerical systems.
\par
The $\blur$ operator presented in this
thesis corresponds to the rounding operation for floating-point numbers
to a certain extent. A topic for further research could be to
introduce closed $\R^*$-intervals for $a,b\in\R^*$ with
\begin{equation*}
	[a,b] := \{ a \} \sqcup (a,b) \sqcup \{ b \} \subset \F.
\end{equation*}
Operations on $\R^*$-Flakes were shown to be well-defined and, thus,
it is possible to extend Flake operations $\star\in\{\boxplus,\boxtimes\}$
to automatically well-defined operations $\diamond$ on closed intervals with
\begin{align*}
	[a,b] \diamond [c,d] :=~& \{ a \} \star \{ c \} \cup
	\{ a \} \star (c,d) \cup \{ a \} \star \{ d \} ~\cup \\
	& (a,b) \star \{ c \} \cup (a,b) \star (c,d) \cup
		(a,b) \star \{ d \} ~\cup \\
	& \{ b \} \star \{ c \} \cup \{ b \} \star (c,d) \cup
		\{ b \} \star \{ d \}
\end{align*}
and simplify it accordingly.
Discretisation is achieved by using floating-point numbers
for the interval bounds and directed rounding for guaranteed
bounds, as elaborated in (\ref{eq:round-interval}).
\par
Unums present the need to have lookup tables for every
operation and nearly every elementary function to be feasible,
which is a huge complexity problem. This is the reason why
using floats instead of lookup tables to achieve this makes
sense, because we studied the behaviour of strictly monotonic
functions on Flakes in this thesis and can directly use
strictly monotonic floating-point functions for Flake
arithmetic instead of lookup tables.
In the end, this could combine the accuracy of floating
point numbers and the certainty of interval arithmetic.
The difference between this and ordinary interval arithmetic
using floating-point number bounds (see \cite{ieee15})
is the use of the projectively extended real numbers instead
of the affinely extended real numbers, making it possible
to model degenerate intervals and divide by zero, and the
knowledge of the results obtained in this thesis to approach
the dependency problem. It comes at the cost of a total
order relation and only offers a partial order, which
can be assumed to be a smaller problem than it seems.
\par
In the end, it all boils down to the question if using
two $32$ bit IEEE 754 floating-point numbers to model such
a closed interval is better than using a single $64$ bit
IEEE 754 floating-point number for a diverse set of algorithms.
The strategy of finding a solution by going from a coarse
to a fine grid for Unums could be easily realised with
floating-point bounded closed intervals and also prove to
be useful for certain applications.
\par
Reaching a point where the dynamic range of high-bit
floating-point numbers is exceeding the range of numbers
of usable magnitude only to compensate rounding errors
to a certain extent, we might find interval arithmetic
on the projectively extended real numbers
to be a good future direction for improving the results
of the very calculations we are doing every day.
\begin{appendix}
	\chapter{Notation Directory}
	\section{Section~\ref{ch:ieee}: \nameref{ch:ieee}}
	\begin{longtable}{p{3.3cm} p{10cm}}
		$n_m$ & number of mantissa-digits ($\equiv$ mantissa-bits in
			base-2) \\
		$n_e$ & number of exponent-bits \\
		$\M_0(n_m,\ul{e})$ & set of subnormal floating-point numbers; see
			Definition~\ref{def:sosfpn} \\
		$\M_1(n_m,\ul{e},\ol{e})$ & set of normal floating-point
			numbers; see Definition~\ref{def:sonfpn} \\
		$\M(n_m,\ul{e}-1,\ol{e}+1)$ & set of floating-point numbers; see
			Definition~\ref{def:sofpn} \\
		$|\nan|(n_m)$ & number of $\nan$ representations; see
			Proposition~\ref{prop:nonr} \\
		$\ul{e}(n_e)$, $\ol{e}(n_e)$ & exponent bias; see
			Definition~\ref{def:eb} \\
		$\rde$ & \nameref{def:natter}; see
			Definition~\ref{def:natter} \\
		$\rd_{\uparrow}$ & upward rounding; see
			Definition~\ref{def:ur} \\
		$\rd_{\downarrow}$ & downward rounding; see
			Definition~\ref{def:dr} 
	\end{longtable}
	\section{Section~\ref{ch:ia}: \nameref{ch:ia}}
	\begin{longtable}{p{3.3cm} p{10cm}}
		$\R^*$ & projectively extended real numbers; see
			Definition~\ref{def:pern} \\
		$\iffy$ & infinity symbol of $\R^*$;
			see Definition~\ref{def:pern} \\
		$\sqcup$ & disjoint union; see Definition~\ref{def:du} \\
		$(\ul{a},\ol{a})$ & open $\R^*$-interval between $\ul{a}$ and
			$\ol{a}$; see Definition~\ref{def:ori} \\
		$\I$ & set of open $\R^*$-intervals; see
			Definition~\ref{def:soori} \\
		$\oplus$  & addition operator on $\I$; see
			Definition~\ref{def:soori} \\
		$\otimes$ & multiplication operator on $\I$; see
			Definition~\ref{def:soori} \\
		$\S(S)$ & set of S-singletons; see
			Definition~\ref{def:sos} \\
		$\F$ & set of $\R^*$-Flakes; see
			Definition~\ref{def:sorf} \\
		$\boxplus$ & addition operator on $\F$; see
			Definition~\ref{def:sorf} \\
		$\boxtimes$ & multiplication operator on $\F$; see
			Definition~\ref{def:sorf} \\
		$f_{\F}$ & $\R^*$-Flake evaluation of the strictly
			monotonic function $f$; see Definitions
			\ref{def:rfeosif} and \ref{def:rfeosdf}
	\end{longtable}
	\section{Section~\ref{ch:ua}: \nameref{ch:ua}}
	\begin{longtable}{p{3.3cm} p{10cm}}
		$\U(P)$ & set of Unums on the lattice $P$; see
			Definition~\ref{def:sou} \\
		$\power(S)$ & powerset of S; see
			Definition~\ref{def:ps} \\
		$\blur$ & blur operator; see Definition~\ref{def:blur} \\
		$\langle\star\rangle$ & dual Unum operation; see
			Definition~\ref{def:duo} \\
		$\langle f_{\F} \rangle$ & Unum evaluation of the strictly
			monotonic function $f$; see Definitions
			\ref{def:ueosif} and \ref{def:ueosdf} \\
		$P_L(p,n)$ & linear Unum lattice; see
			Definition~\ref{def:lul} \\
		$P_E(p,m)$ & exponential Unum lattice; see
			Definition~\ref{def:eul} \\
		$P_D(p,s)$ & decade Unum lattice; see
			Definition~\ref{def:dul} \\
		$\asc$ & ascension operator; see
			Definition~\ref{def:asc} \\
		$u(n)$ & $n$th Unum in the Unum enumeration; see
			Definition~\ref{def:ue} \\
		$n_b$ & number of Unum bits \\
		$n_d$ & number of significant digits \\
		$\U_M(n_b,n_s)$ & set of machine Unums; see
			Definition~\ref{def:somu} \\
	\end{longtable}
	\newpage
	\chapter{Code Listings}\label{ch:code}
	\lstset{basicstyle=\ttfamily\normalsize,inputencoding=utf8/latin1,
		morekeywords={},float,numberstyle=\footnotesize\noncopynumber,
		breakatwhitespace=false,breaklines=false,escapeinside={\%*}{*)},
		columns=fullflexible,keepspaces=true,numbers=left,
		frame=single,showstringspaces=true,keywordstyle=\ttfamily}
	\section{IEEE 754 Floating-Point Problems}
	\subsection{spike.c}
	\lstinputlisting[language=C,label=lst:spike]{problems-ieee/spike.c}
	\subsection{devil.c}
	\lstinputlisting[language=C,label=lst:devil]{problems-ieee/devil.c}
	\subsection{bank.c}
	\lstinputlisting[language=C,label=lst:bank]{problems-ieee/bank.c}
	\subsection{Makefile}
	\lstinputlisting[language=make]{problems-ieee/Makefile}
	\section{Unum Toolbox}
	\subsection{gen.c}
	\lstinputlisting[language=C,label=lst:genc]{toolbox/gen.c}
	\subsection{table.h}
	\lstinputlisting[language=C,label=lst:tableh]{toolbox/table.h}
	\subsection{unum.c}
	\lstinputlisting[language=C,label=lst:unumc]{toolbox/unum.c}
	\subsection{config.mk}
	\lstinputlisting[language=make,label=lst:configmk]{toolbox/config.mk}
	\subsection{Makefile}
	\lstinputlisting[language=make]{toolbox/Makefile}
	\section{Unum Problems}\label{sec:unumproblems}
	These programs expect libunum.a and unum.h in the current directory
	at compile time. It is recommended to create symbolic links to the
	toolbox directory given both are generated dynamically there and
	thus subject to change.
	\par
	The environment parametres for the decade lattice are set in
	\texttt{config.mk} (see Listing~\ref{lst:configmk}).
	\subsection{euler.c}
	\lstinputlisting[language=C,label=lst:euler]
		{problems-unum/euler.c}
	\subsection{devil.c}
	\lstinputlisting[language=C,label=lst:devilc-unum]
		{problems-unum/devil.c}
	\subsection{bank.c}
	\lstinputlisting[language=C,label=lst:bankc-unum]
		{problems-unum/bank.c}
	\subsection{spike.c}
	\lstinputlisting[language=C,label=lst:spikec-unum]
		{problems-unum/spike.c}
	\subsection{Makefile}
		\lstinputlisting[language=make]{problems-unum/Makefile}
	\section{License}
	This ISC license applies to all code listings in
	Chapter~\ref{ch:code}.
	\lstinputlisting[label=lst:license]{LICENSE}
\end{appendix}
\backmatter
\nocite{*}
\bibliography{ba-laslo_hunhold}

\newcommand{\etalchar}[1]{$^{#1}$}
\begin{thebibliography}{PdCMBL96}
\ifx \showCODEN  \undefined \def \showCODEN #1{CODEN #1}  \fi
\ifx \showISBN   \undefined \def \showISBN  #1{ISBN #1}   \fi
\ifx \showISSN   \undefined \def \showISSN  #1{ISSN #1}   \fi
\ifx \showLCCN   \undefined \def \showLCCN  #1{LCCN #1}   \fi
\ifx \showPRICE  \undefined \def \showPRICE #1{#1}        \fi
\ifx \showURL    \undefined \def \showURL {URL }          \fi
\ifx \path       \undefined \input path.sty               \fi
\ifx \ifshowURL \undefined
     \newif \ifshowURL
     \showURLtrue
\fi

\bibitem[Gus15]{gu15}
John~L. Gustafson.
\newblock {\em The End of Error: Unum Computing}.
\newblock Computational Science Series. Chapman \& Hall/CRC, Boca Raton,
  Florida, USA, 1 edition, February 2015.
\newblock \showISBN{9781482239867}.

\bibitem[Gus16a]{gu16-a}
John~L. Gustafson.
\newblock An energy-efficient and massively parallel approach to valid
  numerics.
\newblock ICRAR Seminar,
  \url{http://www.johngustafson.net/presentations/UnumArithmetic-ICRARseminar.pdf},
  June 2, 2016.
\newblock Accessed: 2016-08-31.

\bibitem[Gus16b]{gu16-b}
John~L. Gustafson.
\newblock A radical approach to computation with real numbers.
\newblock \url{http://www.johngustafson.net/pubs/RadicalApproach.pdf}, May
  2016.
\newblock Accessed: 2016-08-31.

\bibitem[{{IEE}}85]{ieee85}
{{IEEE Task P754}}.
\newblock {\em {ANSI\slash IEEE 754-1985, Standard for Binary Floating-Point
  Arithmetic}}.
\newblock New York City, NY, USA, August 12, 1985.
\newblock 20 pp.

\bibitem[{{IEE}}08]{ieee08}
{{IEEE Task P754}}.
\newblock {\em {IEEE 754-2008, Standard for Floating-Point Arithmetic}}.
\newblock New York City, NY, USA, August 29, 2008.
\newblock 58 pp.

\bibitem[{{IEE}}15]{ieee15}
{{IEEE Task P1788}}.
\newblock {\em {IEEE 1788-2015, Standard for Interval Arithmetic}}.
\newblock New York City, NY, USA, June 11, 2015.
\newblock 97 pp.

\bibitem[Kah06]{ka06}
William~Morton Kahan.
\newblock How futile are mindless assessments of roundoff in floating-point
  computation?
\newblock \url{https://people.eecs.berkeley.edu/~wkahan/Mindless.pdf}, January
  2006.
\newblock Accessed: 2016-08-31.

\bibitem[Kow14]{ko14}
Hans-Joachim Kowalsky.
\newblock {\em Topological Spaces}.
\newblock Academic Press, New York City, NY, USA, May 12, 2014.
\newblock \showISBN{9781483265247}.
\newblock 296 pp.

\bibitem[MBdD{\etalchar{+}}10]{mu10}
Jean-Michel Muller, Nicolas Brisebarre, Florent de~Dinechin, Claude-Pierre
  Jeannerod, Vincent Lef{\`e}vre, Guillaume Melquiond, Nathalie Revol, Damien
  Stehl{\'e}, and Serge Torres.
\newblock {\em Handbook of Floating-Point Arithmetic}.
\newblock Birkh{\"a}user Boston, Boston, MA, USA, 1 edition, December 2010.
\newblock \showISBN{9780817647049}.

\bibitem[MKC09]{mo09}
Ramon~E. Moore, R.~Baker Kearfott, and Michael~J. Cloud.
\newblock {\em Introduction to Interval Analysis}.
\newblock Society for Industrial and Applied Mathematics, Philadelphia, PA,
  USA, 1 edition, 2009.
\newblock \showISBN{9780898716696}.
\newblock 223 pp.

\bibitem[MK{\v S}{\etalchar{+}}06]{mu06}
Rafi~L. Muhanna, Vladik Kreinovich, Pavel {\v S}ol{\'i}n, Jack Chessa, Roberto
  Araiza, and Gang Xiang.
\newblock Interval finite element methods: New directions.
\newblock In Rafi~L. Muhanna and Robert~L. Mullen, editors, {\em Modeling
  Errors and Uncertainty in Engineering Computations}, pages 229--243. NSF
  Workshop on Reliable Engineering Computing, REC 2006, Atlanta, GA, USA,
  January 1, 2006.
\newblock \showISBN{044486377X}.

\bibitem[Moo67]{mo67}
Ramon~E. Moore.
\newblock {\em Interval Analysis}.
\newblock Prentice-Hall Series in Automatic Computation. Prentice Hall, Upper
  Saddle River, NJ, USA, 1 edition, January 1967.
\newblock \showISBN{9780134768533}.
\newblock 145 pp.

\bibitem[Moo79]{mo79}
Ramon~E. Moore.
\newblock {\em Methods and Applications of Interval Analysis}.
\newblock Studies in Applied and Numerical Mathematics. Society for Industrial
  and Applied Mathematics, Philadelphia, PA, USA, 1 edition, 1979.
\newblock \showISBN{9780898711615}.
\newblock 190 pp.

\bibitem[PdCMBL96]{pi96}
A.~Pinto~da Costa, J.~A.~C. Martins, F.~Branco, and J.~L. Lilien.
\newblock Oscillations of bridge stay cables induced by periodic motions of
  deck and/or towers.
\newblock {\em Journal of Engineering Mechanics}, 122\penalty0 (7):\penalty0
  613--622, July 1996.

\bibitem[Ran82]{ra82}
Brian Randell.
\newblock From {Analytical Engine} to electronic digital computer: The
  contributions of {Ludgate}, {Torres}, and {Bush}.
\newblock {\em Annals of the History of Computing}, 4\penalty0 (4):\penalty0
  327--341, October\slash December 1982.

\bibitem[Rei82]{re82}
C.~Reinsch.
\newblock A synopsis of interval arithmetic for the designer of programming
  languages.
\newblock In John~Ker Reid, editor, {\em The Relationship Between Numerical
  Computation and Programming Languages}, pages 85--97. IFIP Working Group
  2.5--Mathematical Software, IFIP Technical Committee 2--Programming, Elsevier
  Science Ltd, New York City, NY, USA, April 1982.
\newblock \showISBN{044486377X}.

\bibitem[Roj98]{ro98}
Ra{\'u}l Rojas.
\newblock {\em Die Rechenmaschinen von Konrad Zuse}.
\newblock Springer-Verlag, Berlin Heidelberg, 1 edition, January 1, 1998.
\newblock \showISBN{9783642719455}.
\newblock 236 pp.

\bibitem[Ser15]{se15}
Yaroslav~D. Sergeyev.
\newblock Computations with grossone-based infinities.
\newblock In Cristian~S. Calude and Michael~J. Dinneen, editors, {\em
  Unconventional Computation and Natural Computation}, volume 9252 of {\em
  Lecture Notes in Computer Science}, pages 89--106. UCNC 2015, Springer
  International Publishing, Cham, Switzerland, 2015.
\newblock \showISBN{9783319218182}.

\bibitem[TF95]{tf95}
George~B. Thomas and Ross~L. Finney.
\newblock {\em Calculus and Analytic Geometry}.
\newblock Addison Wesley, Boston, MA, USA, 9 edition, August 1995.
\newblock \showISBN{9780201531749}.

\end{thebibliography}
\selectlanguage{ngerman}
\chapter{Eigenständigkeitserklärung}
Hiermit bestätige ich, daß ich die vorliegende Arbeit selbstständig
verfaßt und keine anderen als die angegebenen Hilfsmittel verwendet habe.
\newline
Die Stellen der Arbeit, die dem Wortlaut oder dem Sinn nach anderen Werken
entnommen sind, wurden unter Angabe der Quelle kenntlich gemacht.
\\[3cm]
Laslo Hunhold
\end{document}